\numberwithin{equation}{section}
\theoremstyle{plain}
\newtheorem{Lemma}{Lemma}[section]
\newtheorem{Proposition}[Lemma]{Proposition}
\newtheorem{Theorem}[Lemma]{Theorem}
\newtheorem{Corollary}[Lemma]{Corollary}
\theoremstyle{definition}
\newtheorem{Definition}[Lemma]{Definition}
\newtheorem{Example}[Lemma]{Example}
\newtheorem{Remark}[Lemma]{Remark}
\def\oe{\"{o}}
\def\a{\`{a}}
\def\quoteleft{`}
\def\B{{\rm B}}
\newcommand{\R}{\mathbb{R}}
\newcommand{\N}{\mathbb{N}}
\DeclareMathOperator{\esssup}{ess\,sup}
\DeclareMathOperator{\essinf}{ess\,inf}
\begin{document}
\title{Mild to classical solutions for XVA equations under stochastic volatility}

\author{Damiano Brigo\footnote{Dep.~of Mathematics, Imperial College London, United Kingdom. {\tt damiano.brigo@imperial.ac.uk}} \and 
Federico Graceffa\setcounter{footnote}{6}\footnote{Dep.~of Mathematics,
Imperial College London, United Kingdom. {\tt federico.graceffa@gmail.com}} \and 
Alexander Kalinin\setcounter{footnote}{3}
\footnote{Dep.~of Mathematics, LMU Munich, Germany. {\tt alex.kalinin@mail.de}. The third author gratefully acknowledges support from Imperial College London through a former Chapman fellowship.}}

\date{\today}
\maketitle

\begin{abstract}
We extend the valuation of contingent claims in presence of default, collateral and funding to a random functional setting and characterise pre-default value processes by martingales. Pre-default value semimartingales can also be described by BSDEs with random path-dependent coefficients and martingales as drivers. En route, we generalise previous settings by relaxing conditions on the available market information, allowing for an arbitrary default-free filtration and constructing a broad class of default times. Moreover, under stochastic volatility, we characterise pre-default value processes via mild solutions to parabolic semilinear PDEs and give sufficient conditions for mild solutions to exist uniquely and to be classical.
\end{abstract}

\noindent
{\bf MSC2010 classification:} 91G20, 91G80, 60G40, 60H20, 60H30, 35K58.
\\
{\bf Keywords:} XVA, valuation, collateral, funding costs, default time, stochastic volatility, stochastic differential equation, mild solution, semilinear parabolic PDE.

\section{Introduction}\label{se:intro}

The aim of this paper is to address the valuation of contingent claims in a financial market under default risk, collateralisation and funding costs and benefits. Based on a general probabilistic setting, we develop a market model from previous works that consists of an investor and a counterparty entering a derivative contract. To evaluate such an agreement with default-free information only, we derive a nonlinear pre-default valuation equation and characterise its solutions, the pre-default value processes. 

By focusing on a stochastic volatility model for the underlying risky asset and its generalised variance, or simply quasi variance, we will reach a parabolic semilinear partial differential equation (PDE) that establishes a direct relation between pre-default value processes and mild solutions. While pursuing this goal, we will achieve further extensions of preceding papers in this area, and the two articles~\cite{BriFraPal16} and~\cite{BriFraPal19} in particular, that focused on viscosity and classical solutions. Our main contributions to the existing literature can be described as follows: 
\begin{enumerate}[(1)]
\item \emph{The available market information may fail to provide any knowledge about the first time of default}. In the earlier work~\cite{BriFraPal19} even full insight into the separate default times of the investor and the counterparty was required, as the two latter filtrations in~\eqref{eq:market condition 1} were supposed to be equal.

\item To handle the general relation~\eqref{eq:market condition 1} between the two filtrations that model the default-free and the available market information, \emph{a variety of representations for conditional expectations} is derived in Section~\ref{se:2.2}. Thereby, Corollary~\ref{co:identification} explains how to identify random quantities before the first time of default occurs and \emph{the default-free filtration is arbitrary}. In particular, it does not need to coincide with the (augmented) natural filtration of a diffusion. 
 
\item \emph{The default times of the two parties, except being conditionally independent and admitting a distribution satisfying weak regularity conditions, are arbitrary}. This is based on an explicit construction in Section~\ref{se:2.3}, which allows for \emph{a detailed analysis of default times}, including a formula for their survival functions in Proposition~\ref{pr:density formula}. Hitting times that involve a gamma distribution, or more specifically, an exponential distribution, as considered for example in~\cite{BriFraPal19}, are feasible, as shown in Example~\ref{ex:gamma distributed hitting times}, and we refer to~\cite{CreSon16} for a discussion on related issues on immersion.

\item \emph{The pre-default valuation equation~\eqref{eq:pre-default valuation} that only requires default-free information is deduced from the generalised valuation equation~\eqref{eq:valuation}} in Proposition~\ref{pr:pre-default valuation}. To this end, for all cash flows, costs and benefits appearing in the valuation, we give concise financial interpretations and state the necessary measurability, path regularity and integrability conditions in Sections~\ref{se:3.2} and~\ref{se:3.3}.

\item We give \emph{two characterisations for pre-default value processes}, the solutions to~\eqref{eq:pre-default valuation}. While Proposition~\ref{pr:martingale characterisation} relates pre-default valuation with the martingale property of the process in~\eqref{eq:pre-default martingale}, Corollary~\ref{co:semimartingale characterisation} describes value processes that are semimartingales by the BSDE~\eqref{eq:backward stochastic representation} with random path-dependent coefficients, driven by a martingale and analysed in Proposition~\ref{pr:backward stochastic representation}. In the previous work~\cite{BriPal14}, for instance, necessary and sufficient conditions for the existence of solutions to the pre-default valuation equation were not explicitly given.

\item \emph{A stochastic volatility model}, described by the two-dimensional SDE~\eqref{eq:stochastic volatility model}, is introduced in Section~\ref{se:4.1}. Regarding the quasi variance process, we give \emph{a criterion for solutions to one-dimensional SDEs to have a.s.~positive paths} in Proposition~\ref{pr:positivity of the variance process}, by extending the main result in~\cite{MisPos08}, and demonstrate in Example~\ref{ex:processes with positive paths} that sums of power functions such as~\eqref{eq:sums of power functions} may appear as drift and diffusion coefficients. Combined with the uniqueness and existence results from~\cite{KalMeyPro21}, Proposition~\ref{pr:transformed SDE} proves that the transformed SDE~\eqref{eq:transformed stochastic volatility model}, obtained by taking the log-price process, is uniquely solvable and yields a diffusion. Then, Example~\ref{ex:stochastic volatility model} applies this result to the specific SDE~\eqref{eq:specific model}, which extends the Heston model~\cite{Hes93} and the Garch diffusion model~\cite{Lew00}.

\item By imposing the dynamics~\eqref{eq:stochastic volatility model} on the price process and its quasi variance in the market model from Section~\ref{se:3}, we eventually reach the parabolic semilinear PDE~\eqref{eq:parabolic equation}. One of the main achievements of the article is that \emph{we characterise pre-default value processes by means of mild solutions} to~\eqref{eq:parabolic equation} in Theorem~\ref{th:pre-default value process}. As the derived diffusion serves as Makov process in the setting of~\cite{Kal20}, we obtain unique bounded mild solutions in Proposition~\ref{pr:mild solution} and, under the conditions of Corollary~\ref{co:classical solution}, mild solutions are in fact classical.
\end{enumerate}

To the best of our knowledge, our paper is the first work on nonlinear valuation and XVA equations to propose mild solutions as middle ground between viscosity and classical solutions for parabolic semilinear valuation PDEs, including also stochastic volatility.

While viscosity solutions can be described by means of test functions to bypass a priori considerations regarding differentiability, mild solutions stem from related implicit integral equations and allow for Picard iterations, as Proposition~2.12 in~\cite{Kal20} shows, for example. Further, the mild solution concept leads to general derivative formulas, deduced in~\cite{ParPen92}, using Lemma~1.1, Corollary~2.8 and Theorems~2.9 and~3.2 therein. In this sense mild solutions are more tractable than those of viscosity type. If a parabolic semilinear PDE, which may also be path-dependent, admits continuous coefficients, then, under certain linear and polynomial growth conditions and a Lipschitz condition, Corollary 4.17 in~\cite{CosFedGozRosTou18} asserts that the two notions coincide. The valuation PDE that we derive, however, does not meet these regularity conditions and, according to the characterisation that we find in Theorem~\ref{th:pre-default value process}, the mild solution concept is indeed suitable.

Note that the mathematical and modelling achievements of this paper are not obtained in the most general setting in terms of financial adjustments, as we do not include capital valuation adjustments and initial margins in our analysis. However, we believe that the default, collateral as variation margin and funding effects we are considering are more than sufficient to highlight the mathematical difficulties of these nonlinear valuation problems. For this purpose, we would like to contextualize this work in the broad area of nonlinear valuation and valuation adjustments, or \quoteleft XVA'. 

Prior to the financial crisis of 2007-2008, financial institutions at times ignored the credit risk
of highly-rated counterparties in valuing and hedging contingent claims. Then, in a short period of about one month, around October 2008, eight mainstream financial institutions defaulted
(Fannie Mae, Freddie Mac, Lehman Brothers, Washington Mutual, Landsbanki, Glitnir and Kaupthing, to which we could also add Merrill Lynch that was saved through a merge with Bank of America).
This highlighted dramatically the fact that no institution could be considered default-free, no matter how systemic or prestigious. This forced dealers and financial institutions to
reassess the valuation of contingent claims, leading to a much more widespread adoption of collateralisation, through various adjustments to their book value.

We will now list some of these adjustments as separate effects, but one should keep in mind that the nonlinearity of the valuation equations makes this separation quite artificial. 
In any case it is difficult to do justice to the entire literature on such valuation adjustments. For a full introduction to credit and funding valuation adjustments and all related references we refer to the first chapter of either~\cite{BriMorPal13} or~\cite{CreBie14}. Here we will only provide a quick summary for context and a few references, before moving to the full nonlinear valuation equation and its analysis.

Firstly, the credit valuation adjustment (CVA) has been introduced to correct the value of a trade with the expected costs borne by one dealer in scenarios where its counterparty defaults. CVA had been around for some time, see for example~\cite{BriMas05}, and its most sophisticated version can include credit migration and ratings transition, as shown in~\cite{BieCiaIyi13}. Further, it already leads to BSDEs under replacement closeout, which was taken into account in~\cite{CreSon15} and~\cite{BriPal14}. It is worth pointing out that collateralisation has not completely eliminated CVA. In~\cite{BriCapPal14}, for instance, it is shown that for some particular deals gap risk may leave a quite large CVA even in presence of daily collateralisation. This is one of the reasons for the introduction of the initial margin as a further collateralisation tool supplementing the variation margin.

Secondly, the debit valuations adjustment (DVA), that on one hand 
is simply CVA seen from the other side, corrects the price with the expected benefits to the dealer due to scenarios where the dealer has an early default on the trade. DVA may lead to a controversial profit that can be booked when the credit quality of the dealer deteriorates, which has led a discussion on considering it more of a funding benefit than a debit adjustment. While the Basel Committee has made recommendations against the use of DVA, accounting standards by the FASB accept DVA for fair value. A detailed discussion can be found in~\cite{BriMorPal13}. On top of this, DVA is very difficult to hedge, as this would involve selling protection on oneself, and this is a further reason why regulators opposed it.

After CVA and DVA, the funding valuation adjustment (FVA) was introduced. FVA is the price adjustment due to the cost of funding the trading activity surrounding a trade. To maintain a trade, the trading desk needs to borrow funds from the bank treasury, giving back funds occasionally. All borrowing and lending has a cost or remuneration in terms of interest fees, and this has to be accounted for. Following FVA, a capital valuation adjustment (KVA) has started being discussed for the cost of capital one has to set aside in order to be able to trade. We will not address KVA here, since its very definition is currently subject to intense debate in the industry. Instead, we refer to~\cite{CreSabSon20} for a recent work addressing the cost of capital. A further adjustment that has been considered is a charge for the cost of setting up the initial margin for a trade. This is often called margin valuation adjustment, or MVA, and was assessed for example in~\cite{BriPal14} and more recently in~\cite{BiaGnoOli21}, where multiple curve effects are also discussed.  

All such adjustments may concern both over the counter (OTC) derivatives trades and derivatives trades done through central clearing houses (CCP). These two cases are compared in~\cite{BriPal14}, where the full mathematical structure of the problem of valuation under possibly asymmetric initial and variation margins, funding costs, liquidation delay and credit gap risk is explored. This nonlinear valuation analysis has been made more rigorous in the subsequent paper~\cite{BriFraPal19} and by many other authors.

For an early example of how asymmetric interest rates, even in absence of credit risk, lead to BSDEs see~\cite{ElKPenQuen17}. The paper~\cite{BicCapStu18} deals with the mathematical analysis of valuation equations in presence of all the above-mentioned effects and risks, except KVA. CVA and FVA are analysed in~\cite{BifBlaPitSun16} in the area of life insurance contracts, and longevity swaps in particular.
Finally, an in-depth discussion of replication in presence of default and funding effects is presented in~\cite{BriBueFraPalRut18}, discussing also valuation in general settings when replication is not assumed. This article contributes to the literature on nonlinear valuation equations, from BSDEs to PDEs, of the type seen in the above-mentioned works, especially~\cite{CreSon15},~\cite{BriPal14},~\cite{BicCapStu18} and~\cite{BriFraPal19}, by focusing on mild solutions among the other contributions listed earlier.
\smallskip

The paper is structured as follows.  Section~\ref{se:2} sets up the notation and discusses the required probabilistic methods to handle the financial market model. Namely, after a concise introduction of the notation in Section~\ref{se:2.1}, we deal with conditional expectations in Section~\ref{se:2.2} and provide a class of default times in Section~\ref{se:2.3}.

In Section~\ref{se:3} we specify and analyse the market model. While Section~\ref{se:3.1} explains the setting, all the cash flows, costs and benefits that are relevant to determine the price of the derivative contract are quantified in Section~\ref{se:3.2}. Then, in Section~\ref{se:3.3} the pre-default valuation equation~\eqref{eq:pre-default valuation} is derived and its solutions, the pre-default value processes, are characterised in Proposition~\ref{pr:martingale characterisation} and Corollary~\ref{co:semimartingale characterisation}.

In Section~\ref{se:4} we impose a general stochastic volatility model on the underlying risky asset and its quasi variance to deduce the pre-default valuation PDE~\eqref{eq:parabolic equation}. To this end, Section~\ref{se:4.1} considers the SDE~\eqref{eq:stochastic volatility model} that governs the volatility model with regard to pathwise uniqueness, strong existence, moment estimates and positivity of paths. As a result, Proposition~\ref{pr:transformed SDE} shows that the transformed SDE~\eqref{eq:transformed stochastic volatility model} yields a diffusion.

Section~\ref{se:4.2} discusses a deterministic setting of the market model for the valuation PDE to prevail, and pre-default value processes are characterised by means of mild solutions in Theorem~\ref{th:pre-default value process} there. An existence and uniqueness result for bounded mild solutions is derived in Proposition~\ref{pr:mild solution} and sufficient conditions for mild solutions to be classical are given in Corollary~\ref{co:classical solution}.

All proofs for the probabilistic methods in Section~\ref{se:2.2}, the constructed hitting times in Section~\ref{se:2.3} and the market model of Section~\ref{se:3} are deferred to Section~\ref{se:5}. The results for the volatility model and the valuation PDE in Section~\ref{se:4} are proven in Section~\ref{se:6}.

\section{Preliminaries}\label{se:2}

Throughout the paper, let $(\Omega,\mathscr{F},P)$ denote a probability space, $T > 0$ and $(\mathscr{F}_{t})_{t\in [0,T]}$, $(\tilde{\mathscr{F}}_{t})_{t\in [0,T]}$ be two filtrations of $\mathscr{F}$.

\subsection{Notation and basic concepts}\label{se:2.1}

We recall that the extended non-negative real line $[0,\infty]$ is completely metrizable in such a way that the resulting trace topology of $\R_{+}$ agrees with the topology on $\R_{+}$ induced by the absolute value function. For instance, take the metric given by
\begin{equation*}
d_{\infty}(x,y) = |f_{\infty}(x) - f_{\infty}(y)|
\end{equation*}
for any $x,y\in[0,\infty]$ with the strictly increasing homeomorphism $f_{\infty}:\R_{+}\rightarrow [0,1[$ given by $f_{\infty}(x) := x/(1 + x)$ that satisfies $f_{\infty}(\infty) = 1$, where we set $f(\infty) := \lim_{x\uparrow\infty} f(x)$ for any real-valued monotone function $f$ defined on some interval. We shall use the induced topology of $d_{\infty}$ in Sections~\ref{se:2.2} and~\ref{se:2.3}.

For $p\in [1,\infty[$ let $\mathscr{L}^{p}(\R)$ denote the linear space of all real-valued Borel measurable $p$-fold Lebesgue integrable functions on $[0,T]$ and $\mathscr{L}^{p}(\R_{+})$ stand for the convex cone of all $\R_{+}$-valued functions in $\mathscr{L}^{p}(\R)$. For the Banach space of all real-valued c\a dl\a g functions on $[0,T]$, endowed with the supremum norm, we use the standard notation $D([0,T])$.

Let $\mathscr{S}$ and $\tilde{\mathscr{S}}$ be the linear spaces of all (real-valued) processes that are adapted to $(\mathscr{F}_{t})_{t\in [0,T]}$ and $(\tilde{\mathscr{F}}_{t})_{t\in [0,T]}$, respectively, which will be used extensively in Section~\ref{se:3}. Further, a real-valued function $u$ on $[0,T]\times\R\times ]0,\infty[$ will be called right-continuous if for each $(s,x,v)\in [0,T]\times\R\times ]0,\infty[$ and any $\varepsilon > 0$ there is $\delta > 0$ such that
\begin{equation*}
|u(s,x,v) - u(t,y,w)| < \varepsilon
\end{equation*}
for all $(t,y,w)\in [s,T]\times\R\times ]0,\infty[$ with $|s-t| + |x-y| + |v-w| < \delta$. This notion of right-continuity in time and continuity in space from Definition~2.1 in~\cite{Kal20} will be used for the right-hand Feller property of a diffusion in Section~\ref{se:4}.

\subsection{Representations of conditional expectations}\label{se:2.2}

In this section let $\mathscr{T}$ be a non-empty finite set of $[0,T]\cup\{\infty\}$-valued random variables. Each $\tau\in\mathscr{T}$ defines the smallest filtration $(\mathscr{H}_{t}^{\tau})_{t\in [0,T]}$ under which it becomes a stopping time. Namely,
\begin{equation}\label{eq:stopping time sigma-field}
\mathscr{H}_{t}^{\tau}=\sigma\big(\mathbbm{1}_{\{\tau \leq s\}}:s\in [0,t]\big)\quad\text{for all $t\in [0,T]$.}
\end{equation}
By setting $\mathscr{H}_{t}:= \bigvee_{\tau\in\mathscr{T}}\mathscr{H}_{t}^{\tau}$ for any $t\in [0,T]$, we obtain the smallest filtration under which any $\tau\in\mathscr{T}$ is a stopping time. Then the $(\mathscr{H}_{t})_{t\in [0,T]}$-stopping time $\rho:=\min_{\tau\in\mathscr{T}}\tau$ gives rise to the filtration $(\mathscr{F}_{t}^{\mathscr{T}})_{t\in [0,T]}$ defined via
\begin{equation*}
\mathscr{F}_{t}^{\mathscr{T}}:=\big\{\tilde{A}\in\mathscr{F}\,|\,\exists A\in\mathscr{F}_{t}: \{\rho > t\}\cap A = \{\rho > t\}\cap\tilde{A}\big\},
\end{equation*}
which satisfies $\mathscr{F}_{t}\vee\mathscr{H}_{t}\subset\mathscr{F}_{t}^{\mathscr{T}}$ for any $t\in [0,T]$. These concepts generalise the framework in~\cite{BieRut02}[Section 5.1.1] and yield an essential relation between conditional expectations, given another filtration $(\tilde{\mathscr{F}}_{t})_{t\in [0,T]}$ such that $\mathscr{F}_{t}\subset\tilde{\mathscr{F}}_{t}\subset\mathscr{F}_{t}\vee\mathscr{H}_{t}$ for all $t\in [0,T]$.

\begin{Lemma}\label{le:essential identity}
Any $[0,\infty]$-valued random variable $X$ satisfies 
\begin{equation*}
E[X\mathbbm{1}_{\{\rho > t\}}|\tilde{\mathscr{F}}_{s}] P(\rho > s|\mathscr{F}_{s}) =  E[X\mathbbm{1}_{\{\rho > t\}}|\mathscr{F}_{s}]P(\rho > s|\tilde{\mathscr{F}}_{s})\quad\text{a.s.}
\end{equation*}
for all $s,t\in [0,T]$ with $s\leq t$.
\end{Lemma}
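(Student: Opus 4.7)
The starting observation is that $\tilde{\mathscr{F}}_{s}\subset\mathscr{F}_{s}\vee\mathscr{H}_{s}\subset\mathscr{F}_{s}^{\mathscr{T}}$, where the second inclusion holds because, for any $\tau\in\mathscr{T}$ and any $u\in [0,s]$, one has $\{\rho>s\}\cap\{\tau\leq u\}=\emptyset$ (since $\rho\leq\tau$), so the generators $\mathbbm{1}_{\{\tau\leq u\}}$ of $\mathscr{H}_{s}^{\tau}$ coincide on $\{\rho>s\}$ with the $\mathscr{F}_{s}$-measurable constant $0$. Hence every $\tilde{A}\in\tilde{\mathscr{F}}_{s}$ admits an $A\in\mathscr{F}_{s}$ with $\{\rho>s\}\cap\tilde{A}=\{\rho>s\}\cap A$, and since $s\leq t$ gives $\{\rho>t\}\subset\{\rho>s\}$, one obtains the pointwise identity
\begin{equation*}
\mathbbm{1}_{\tilde{A}}\,\mathbbm{1}_{\{\rho>t\}}=\mathbbm{1}_{A}\,\mathbbm{1}_{\{\rho>t\}},\qquad \mathbbm{1}_{\tilde{A}}\,\mathbbm{1}_{\{\rho>s\}}=\mathbbm{1}_{A}\,\mathbbm{1}_{\{\rho>s\}}.
\end{equation*}
Both sides of the desired identity are $\tilde{\mathscr{F}}_{s}$-measurable (using $\mathscr{F}_{s}\subset\tilde{\mathscr{F}}_{s}$), so it suffices to show that they have the same integral over every $\tilde{A}\in\tilde{\mathscr{F}}_{s}$.

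For the left-hand side, since $P(\rho>s|\mathscr{F}_{s})$ is $\tilde{\mathscr{F}}_{s}$-measurable, the defining property of conditional expectation yields
\begin{equation*}
E\bigl[\mathbbm{1}_{\tilde{A}}\,E[X\mathbbm{1}_{\{\rho>t\}}|\tilde{\mathscr{F}}_{s}]\,P(\rho>s|\mathscr{F}_{s})\bigr]
=E\bigl[\mathbbm{1}_{\tilde{A}}\,X\mathbbm{1}_{\{\rho>t\}}\,P(\rho>s|\mathscr{F}_{s})\bigr],
\end{equation*}
which by the set-agreement above equals $E[\mathbbm{1}_{A}\,X\mathbbm{1}_{\{\rho>t\}}\,P(\rho>s|\mathscr{F}_{s})]$, and conditioning on $\mathscr{F}_{s}$ then gives $E[\mathbbm{1}_{A}\,E[X\mathbbm{1}_{\{\rho>t\}}|\mathscr{F}_{s}]\,P(\rho>s|\mathscr{F}_{s})]$. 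For the right-hand side, write $Y:=E[X\mathbbm{1}_{\{\rho>t\}}|\mathscr{F}_{s}]$, which is $\tilde{\mathscr{F}}_{s}$-measurable, and apply the tower property in the reverse direction together with the set-agreement to obtain
\begin{equation*}
E\bigl[\mathbbm{1}_{\tilde{A}}\,Y\,P(\rho>s|\tilde{\mathscr{F}}_{s})\bigr]
=E\bigl[\mathbbm{1}_{\tilde{A}}\,Y\,\mathbbm{1}_{\{\rho>s\}}\bigr]
=E\bigl[\mathbbm{1}_{A}\,Y\,\mathbbm{1}_{\{\rho>s\}}\bigr]
=E\bigl[\mathbbm{1}_{A}\,Y\,P(\rho>s|\mathscr{F}_{s})\bigr],
\end{equation*}
which coincides with the expression obtained from the left-hand side.

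The only genuine issue is that $X$ is allowed to take the value $+\infty$, so the intermediate integrals might be $+\infty$ and the conditional expectations $[0,\infty]$-valued; this is handled by truncating $X$ to $X\wedge n$, applying the argument above for each $n$, and passing to the limit via monotone convergence (with the usual convention $0\cdot\infty=0$). The principal conceptual step is the identification $\tilde{\mathscr{F}}_{s}\subset\mathscr{F}_{s}^{\mathscr{T}}$, which produces the set-agreement that replaces $\tilde{A}$ by an $\mathscr{F}_{s}$-set on $\{\rho>s\}$; once this is in place, both sides collapse to the common expression $E[\mathbbm{1}_{A}\,Y\,P(\rho>s|\mathscr{F}_{s})]$ by two applications of the tower property, so no serious technical obstacle remains.
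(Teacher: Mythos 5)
Your proof is correct and follows essentially the same route as the paper's: both exploit $\tilde{\mathscr{F}}_{s}\subset\mathscr{F}_{s}^{\mathscr{T}}$ to replace a test set $\tilde{A}\in\tilde{\mathscr{F}}_{s}$ by some $A\in\mathscr{F}_{s}$ on $\{\rho>s\}$ and then reduce both sides to $E[\mathbbm{1}_{A}\,E[X\mathbbm{1}_{\{\rho>t\}}|\mathscr{F}_{s}]\,P(\rho>s|\mathscr{F}_{s})]$ via the tower property. You additionally justify the inclusion $\mathscr{F}_{s}\vee\mathscr{H}_{s}\subset\mathscr{F}_{s}^{\mathscr{T}}$ and address the $[0,\infty]$-valued case by truncation, neither of which the paper spells out, but the argument is the same.
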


We notice that any decreasing sequence $(A_{t})_{t\in [0,T]}$ in $\mathscr{F}$ satisfies $P(A_{s}|\mathscr{F}_{s}) \geq P(A_{t}|\mathscr{F}_{s})$ $= E[P(A_{t}|\mathscr{F}_{t})|\mathscr{F}_{s}]$ a.s.~for all $s,t\in [0,T]$ with $s\leq t$. In particular, for every random variable $\tau$ with values in $[0,T]\cup\{\infty\}$ we have
\begin{equation}\label{eq:survival process}
P(\tau > t|\mathscr{F}_{t}) = G_{t}(\tau)\quad\text{a.s.}\quad\text{for any $t\in [0,T]$}
\end{equation}
and some $[0,1]$-valued $(\mathscr{F}_{t})_{t\in [0,T]}$-supermartingale $G(\tau)$, which is called an \emph{survival process} of $\tau$ relative to this filtration, unique up to a modification. This fact allows us to identify random variables before $\rho$ occurs. 

\begin{Corollary}\label{co:identification}
For $t\in [0,T]$ let $X$ and $\tilde{X}$ be two $\R_{+}$-valued random variables that are measurable relative to $\mathscr{F}_{t}$ and $\tilde{\mathscr{F}}_{t}$, respectively. Then $X = \tilde{X}$ a.s.~on $\{\rho > t\}$ if and only if
\begin{equation}\label{eq:conditional relation}
XG_{t}(\rho) = E[\tilde{X}\mathbbm{1}_{\{\rho > t\}}|\mathscr{F}_{t}]\quad\text{a.s.}
\end{equation}
In this case, $X$ is a.s.~uniquely determined as soon as $G_{t}(\rho) > 0$ a.s.
\end{Corollary}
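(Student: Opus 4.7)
The plan is to establish both implications in~\eqref{eq:conditional relation} separately and then deduce uniqueness, relying on~\eqref{eq:survival process} and the inclusion $\tilde{\mathscr{F}}_{t}\subset\mathscr{F}_{t}\vee\mathscr{H}_{t}\subset\mathscr{F}_{t}^{\mathscr{T}}$ that follows from the definition of $\mathscr{F}_{t}^{\mathscr{T}}$.

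The forward implication is a routine computation: given $X = \tilde{X}$ a.s.~on $\{\rho > t\}$, I would multiply by $\mathbbm{1}_{\{\rho > t\}}$, take conditional expectation with respect to $\mathscr{F}_{t}$, pull out the $\mathscr{F}_{t}$-measurable factor $X$, and replace $P(\rho > t|\mathscr{F}_{t})$ by $G_{t}(\rho)$ via~\eqref{eq:survival process}, which directly yields~\eqref{eq:conditional relation}.

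The converse is the main issue. The key step will be to construct an $\R_{+}$-valued, $\mathscr{F}_{t}$-measurable random variable $Y$ that coincides with $\tilde{X}$ a.s.~on $\{\rho > t\}$. On indicators this is precisely the defining property of $\mathscr{F}_{t}^{\mathscr{T}}$: each $\tilde{A}\in\tilde{\mathscr{F}}_{t}\subset\mathscr{F}_{t}^{\mathscr{T}}$ comes with some $A\in\mathscr{F}_{t}$ so that $\mathbbm{1}_{A}\mathbbm{1}_{\{\rho > t\}} = \mathbbm{1}_{\tilde{A}}\mathbbm{1}_{\{\rho > t\}}$, and a monotone-class extension through non-negative simple functions and monotone limits promotes this to $\tilde{X}$. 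Applying the already-proven forward direction to $Y$ in place of $X$ then yields $Y G_{t}(\rho) = E[\tilde{X}\mathbbm{1}_{\{\rho > t\}}|\mathscr{F}_{t}]$ a.s., which combined with the standing assumption gives $X G_{t}(\rho) = Y G_{t}(\rho)$ a.s.

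To transfer this identity back to $\{\rho > t\}$ I would split on the sign of $G_{t}(\rho)$: on $\{G_{t}(\rho) > 0\}$ the factors cancel and force $X = Y$, whereas the event $\{\rho > t\}\cap\{G_{t}(\rho) = 0\}$ is a.s.~empty because $E[\mathbbm{1}_{\{G_{t}(\rho) = 0\}}\mathbbm{1}_{\{\rho > t\}}] = E[\mathbbm{1}_{\{G_{t}(\rho) = 0\}} G_{t}(\rho)] = 0$ by~\eqref{eq:survival process} and the $\mathscr{F}_{t}$-measurability of $\{G_{t}(\rho) = 0\}$. Combined with $Y = \tilde{X}$ a.s.~on $\{\rho > t\}$ this yields the desired equality $X = \tilde{X}$ a.s.~on $\{\rho > t\}$. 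The uniqueness claim under $G_{t}(\rho) > 0$ a.s.~is then immediate by dividing~\eqref{eq:conditional relation} by $G_{t}(\rho)$. The only step that is not pure conditional-expectation bookkeeping is the monotone-class construction of $Y$, where the inclusion $\tilde{\mathscr{F}}_{t}\subset\mathscr{F}_{t}^{\mathscr{T}}$ does all the real work.
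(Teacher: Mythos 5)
Your argument is correct, but the converse implication takes a genuinely different route from the paper's. The paper's proof invokes Lemma~\ref{le:essential identity} with $s=t$ to deduce $\tilde{X}\,P(\rho>t\,|\,\tilde{\mathscr{F}}_t)=X\,P(\rho>t\,|\,\tilde{\mathscr{F}}_t)$ a.s.~on $\{G_t(\rho)>0\}$, and then tests this identity against the cleverly chosen sets $\tilde{A}=\{n\geq\tilde{X}>X\}$ and $\tilde{A}=\{\tilde{X}\leq X\leq n\}$ to read off $X=\tilde{X}$ a.s.~on $\{\rho>t\}$. You instead build an $\mathscr{F}_t$-measurable $Y$ with $Y\mathbbm{1}_{\{\rho>t\}}=\tilde{X}\mathbbm{1}_{\{\rho>t\}}$ by bootstrapping the defining property of $\mathscr{F}_t^{\mathscr{T}}$ from indicators through simple functions to monotone limits, apply the already-established forward direction to $Y$, and cancel $G_t(\rho)$ off the null set $\{\rho>t\}\cap\{G_t(\rho)=0\}$. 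This avoids Lemma~\ref{le:essential identity} altogether in the converse, at the cost of the measure-theoretic construction: note the monotone-limit step needs a tiny extra twist, since the $Y_n$ corresponding to an increasing sequence $\tilde{X}_n\uparrow\tilde{X}$ need not themselves be monotone---you should pass to the running maxima $Z_n:=\max(Y_1,\dots,Y_n)$, which are $\mathscr{F}_t$-measurable, increasing, and still agree with $\tilde{X}_n$ on $\{\rho>t\}$, and then truncate the limit on $\{Z=\infty\}$ to keep it $\R_+$-valued. With that detail filled in your proof is complete, and it has the side benefit of making the existence of an $\mathscr{F}_t$-measurable ``pre-default representative'' of $\tilde{X}$ explicit, something the paper uses implicitly later but does not derive here. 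The paper's argument is shorter once Lemma~\ref{le:essential identity} is in hand; yours is more constructive and illuminates why $\mathscr{F}_t^{\mathscr{T}}$ is the right ambient $\sigma$-field.
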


Now we rewrite a conditional expectation of a stopped integral by means of the survival process $G(\rho)$.

\begin{Lemma}\label{le:stopped integral}
Let $s\in [0,T]$ and $G(\rho)$ be measurable. If $X$ and $\tilde{X}$ are two $[0,\infty]$-valued measurable processes such that $X_{t}$ is $\mathscr{F}_{t}$-measurable and $X_{t} = \tilde{X}_{t}$ a.s.~on $\{\rho > t\}$ for all $t\in [s,T]$, then
\begin{equation*}
E\bigg[\int_{s}^{T\wedge\rho} \tilde{X}_{t}\,dt\,\bigg|\,\mathscr{F}_{s}\bigg] = E\bigg[\int_{s}^{T}X_{t}G_{t}(\rho)\,dt\,\bigg|\,\mathscr{F}_{s}\bigg]\quad\text{a.s.}
\end{equation*}
\end{Lemma}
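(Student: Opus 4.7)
The plan is to express the stopped integral as an integral against an indicator, push the conditional expectation inside, replace $\tilde X_t$ by $X_t$ fibre-wise using the a.s.~agreement on $\{\rho>t\}$, and then use the tower property together with~\eqref{eq:survival process} to introduce $G_t(\rho)$.

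More concretely, I would first rewrite
\begin{equation*}
\int_{s}^{T\wedge\rho}\tilde X_t\,dt=\int_{s}^{T}\tilde X_t\,\mathbbm{1}_{\{t<\rho\}}\,dt\quad\text{a.s.}
\end{equation*}
(with the usual convention about the negligible event $\{\rho=t\}$ of Lebesgue measure zero in $t$). Since $\tilde X$ and $\rho$ are measurable, the integrand is jointly measurable on $[s,T]\times\Omega$ and takes values in $[0,\infty]$, so Tonelli for non-negative measurable processes (combined with the monotone class/Fubini statement for conditional expectations) gives
\begin{equation*}
E\bigg[\int_{s}^{T\wedge\rho}\tilde X_t\,dt\,\bigg|\,\mathscr{F}_s\bigg]
=\int_{s}^{T}E\big[\tilde X_t\,\mathbbm{1}_{\{\rho>t\}}\,\big|\,\mathscr{F}_s\big]\,dt\quad\text{a.s.}
\end{equation*}

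The key replacement step is now pointwise in $t$: for each fixed $t\in[s,T]$ the hypothesis yields $\tilde X_t\mathbbm{1}_{\{\rho>t\}}=X_t\mathbbm{1}_{\{\rho>t\}}$ a.s., hence $E[\tilde X_t\mathbbm{1}_{\{\rho>t\}}|\mathscr{F}_s]=E[X_t\mathbbm{1}_{\{\rho>t\}}|\mathscr{F}_s]$ a.s. Using the tower property together with the $\mathscr{F}_t$-measurability of $X_t$ and~\eqref{eq:survival process}, I would then compute
\begin{equation*}
E\big[X_t\mathbbm{1}_{\{\rho>t\}}\,\big|\,\mathscr{F}_s\big]
=E\big[X_t\,P(\rho>t|\mathscr{F}_t)\,\big|\,\mathscr{F}_s\big]
=E\big[X_tG_t(\rho)\,\big|\,\mathscr{F}_s\big]\quad\text{a.s.}
\end{equation*}
for each $t\in[s,T]$. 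A final application of Tonelli, which requires the assumed measurability of $G(\rho)$ to ensure that $(t,\omega)\mapsto X_t(\omega)G_t(\rho)(\omega)$ is jointly measurable, swaps the $dt$ and $E[\,\cdot\,|\mathscr{F}_s]$ back and delivers the claimed identity.

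The main subtlety, rather than an obstacle, is that the hypothesis $X_t=\tilde X_t$ a.s.~on $\{\rho>t\}$ is a pointwise-in-$t$ statement whose exceptional null set may depend on $t$, so one cannot directly equate the integrands as processes; one has to take the conditional expectation first, obtain equality of $dt$-a.e.~representatives, and only then integrate. Joint measurability of $\mathbbm{1}_{\{\rho>t\}}$ in $(t,\omega)$, used implicitly for the Tonelli steps, follows from the measurability of $\rho$, while the measurability assumption on $G(\rho)$ is exactly what ensures the right-hand side of the lemma is well defined.
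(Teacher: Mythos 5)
Your proposal is correct and follows essentially the same route as the paper: both reduce the claim to the identity $E[\tilde X_t\mathbbm{1}_{\{\rho>t\}}\mid\mathscr{F}_t]=X_tG_t(\rho)$ a.s.~(pointwise in $t$) and then apply Tonelli. The only cosmetic difference is that the paper verifies the defining property of the conditional expectation by integrating against $\mathbbm{1}_A$ for $A\in\mathscr{F}_s$ and swapping $E$ with $\int dt$, whereas you invoke a conditional Tonelli to move $E[\,\cdot\mid\mathscr{F}_s]$ inside the time integral and then back out; since conditional Tonelli is itself proved by exactly the $\mathbbm{1}_A$ argument, the two write-ups are the same proof in different packaging.
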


To consider conditional expectations of processes combined with stopping times, we require a general concept of conditional independence.

\begin{Definition}
Let $m\in\mathbb{N}$ and $\tau_{1},\dots,\tau_{m}$ be $[0,T]\cup\{\infty\}$-valued random variables. Then $\tau_{1},\dots,\tau_{m}$ are called \emph{$(\mathscr{F}_{t})_{t\in [0,T]}$-conditionally independent} if
\begin{equation*}
P(\tau_{1} > s_{1},\dots,\tau_{m} > s_{m}|\mathscr{F}_{t}) = P(\tau_{1} > s_{1}|\mathscr{F}_{t})\cdots P(\tau_{m} > s_{m}|\mathscr{F}_{t})\quad\text{a.s.}
\end{equation*}
for each $t\in [0,T]$ and any $s_{1},\dots,s_{m}\in [0,t]$.
\end{Definition}

As $[0,T]\cup\{\infty\}$ is closed in the Polish space $[0,\infty]$, any random variable $\tau$ taking all its values there admits a regular conditional probability $K$ given $\mathscr{F}_{t}$, where $t\in [0,T]$. That is, $K$ is a Markovian kernel from $(\Omega,\mathscr{F}_{t})$ to $[0,T]\cup\{\infty\}$ such that
\begin{equation*}
P(\tau\in B|\mathscr{F}_{t}) = K(\cdot,B)\quad\text{a.s.}\quad \text{for any $B\in\mathscr{B}([0,T]\cup\{\infty\})$}.
\end{equation*}
In consequence, if two $[0,T]\cup\{\infty\}$-valued random variables are $(\mathscr{F}_{t})_{t\in [0,T]}$-conditionally independent, then their joint conditional distribution with respect to $\mathscr{F}_{t}$ is completely determined up to time $t$ in the following sense.

\begin{Lemma}\label{le:conditional distribution}
Let $\sigma,\tau$ be two $[0,T]\cup\{\infty\}$-valued $(\mathscr{F}_{t})_{t\in [0,T]}$-conditionally independent random variables and $t\in [0,T]$. Then
\begin{equation}\label{eq:conditional distribution}
P((\sigma,\tau)\in C|\mathscr{F}_{t})(\omega) = K(\omega,\cdot)\otimes L(\omega,\cdot)(C)\quad\text{for $P$-a.e.~$\omega\in\Omega$,}
\end{equation}
all $C\in\mathscr{B}(([0,t]\cup\{\infty\})^{2})$ and any two respective regular conditional probabilities $K$ and $L$ of $\sigma$ and $\tau$ given $\mathscr{F}_{t}$.
\end{Lemma}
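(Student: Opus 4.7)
The natural route is a Dynkin $\pi$-$\lambda$ argument: verify the identity on a countable generating $\pi$-system, then use regular conditional distributions to upgrade the countable family of almost-sure identities to a single statement uniform in $C$.

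For the $\pi$-system I would take $\mathscr{P} = \{((s_{1},t]\cup\{\infty\})\times((s_{2},t]\cup\{\infty\}) : s_{1},s_{2}\in[0,t]\cap\mathbb{Q}\}$, which is closed under intersection via $((s_{1},t]\cup\{\infty\})\cap((s_{1}',t]\cup\{\infty\}) = (s_{1}\vee s_{1}',t]\cup\{\infty\}$. Because $\{\infty\}$ is isolated in $[0,t]\cup\{\infty\}$ under $d_{\infty}$ (the open neighbourhood $(t,\infty]$ of $\infty$ meets the subspace only in $\{\infty\}$), every Borel subset of $[0,t]\cup\{\infty\}$ decomposes as a Borel subset of $[0,t]$ together with $\emptyset$ or $\{\infty\}$; both parts arise from countable Boolean operations on elements of $\mathscr{P}$, so $\mathscr{P}$ together with the ambient space generates $\mathscr{B}(([0,t]\cup\{\infty\})^{2})$.

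On a representative rectangle $R = ((s_{1},t]\cup\{\infty\})\times((s_{2},t]\cup\{\infty\})\in\mathscr{P}$, I would expand $\mathbbm{1}_{R}(\sigma,\tau)$ using the inclusion-exclusion identity $\mathbbm{1}_{(s,t]\cup\{\infty\}}(\sigma) = \mathbbm{1}_{\{\sigma>s\}}-\mathbbm{1}_{\{\sigma>t\}}+\mathbbm{1}_{\{\sigma=\infty\}}$ (noting $\{\sigma=\infty\}=\{\sigma>T\}$) and the analogue for $\tau$. This produces a finite sum of products of threshold indicators, each of which the conditional-independence hypothesis factorizes into the corresponding marginal conditional probabilities: directly when both thresholds lie in $[0,t]$, and via the hypothesis at the terminal time $T$ combined with the tower property $E[\cdot|\mathscr{F}_{t}] = E[E[\cdot|\mathscr{F}_{T}]|\mathscr{F}_{t}]$ for the $\{\sigma=\infty\}$ and $\{\tau=\infty\}$ cross terms. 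Recombining yields $P((\sigma,\tau)\in R|\mathscr{F}_{t}) = K(\omega,(s_{1},t]\cup\{\infty\})\,L(\omega,(s_{2},t]\cup\{\infty\})$ a.s.

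To finish, let $M(\omega,\cdot)$ be a regular conditional distribution of $(\sigma,\tau)$ given $\mathscr{F}_{t}$, available because $([0,T]\cup\{\infty\})^{2}$ is Polish. Collecting the countably many exceptional $P$-null sets $N_{R}$, $R\in\mathscr{P}$, into a single null set $N$, for $\omega\notin N$ the finite measures $M(\omega,\cdot)$ and $K(\omega,\cdot)\otimes L(\omega,\cdot)$ restricted to $\mathscr{B}(([0,t]\cup\{\infty\})^{2})$ coincide on $\mathscr{P}$ and on the ambient space, and the standard uniqueness of finite measures from their values on a generating $\pi$-system extends the identity to every Borel $C$. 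The principal obstacle I anticipate is the middle step: the combinatorial bookkeeping, and especially the reduction of the $\{\sigma=\infty\}$ and $\{\tau=\infty\}$ contributions—which are not literally tail events with threshold in $[0,t]$—to applications of the hypothesis at the terminal time combined with iterated conditioning.
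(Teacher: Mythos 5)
Your overall strategy --- factorize the identity on a countable $\cap$-stable generating family, invoke the monotone class theorem for each member, then collect the countably many null sets so that for $P$-a.e.\ $\omega$ the two measures $M(\omega,\cdot)$ and $K(\omega,\cdot)\otimes L(\omega,\cdot)$ agree on a $\pi$-system and hence on all of $\mathscr{B}(([0,t]\cup\{\infty\})^2)$ --- is the same structure as the paper's proof, and your null-set bookkeeping is in fact more careful than the paper's final sentence. The genuine gap, however, is in the factorization step that you yourself flag. Expanding $\mathbbm{1}_{(s,t]\cup\{\infty\}}(\sigma)=\mathbbm{1}_{\{\sigma>s\}}-\mathbbm{1}_{\{\sigma>t\}}+\mathbbm{1}_{\{\sigma=\infty\}}$ introduces the indicator $\mathbbm{1}_{\{\sigma=\infty\}}=\mathbbm{1}_{\{\sigma>T\}}$, whose threshold $T$ lies outside $[0,t]$ whenever $t<T$, so the hypothesis does not apply. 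Your proposed fix, applying the hypothesis at $T$ and then the tower property, produces
\begin{equation*}
P(\sigma=\infty,\,\tau>s_2\,|\,\mathscr{F}_t)=E\big[P(\sigma=\infty\,|\,\mathscr{F}_T)\,P(\tau>s_2\,|\,\mathscr{F}_T)\,\big|\,\mathscr{F}_t\big],
\end{equation*}
and there is no reason for this conditional expectation of a \emph{product} of $\mathscr{F}_T$-measurable random variables to split as $P(\sigma=\infty\,|\,\mathscr{F}_t)\,P(\tau>s_2\,|\,\mathscr{F}_t)$ without an additional assumption such as immersion, $P(\tau>s_2\,|\,\mathscr{F}_T)=P(\tau>s_2\,|\,\mathscr{F}_t)$ for $s_2\leq t$, which is not among the lemma's stated hypotheses.

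The step cannot in fact be made to work from the stated hypotheses when $t<T$. On $\Omega=\{1,2\}$ with $P$ uniform, take $\mathscr{F}_s$ trivial for $s<T$, $\mathscr{F}_T=2^\Omega$, and $\sigma=\tau$ with $\sigma(1)=T$, $\sigma(2)=\infty$. Then $\{\sigma>s\}=\Omega$ for every $s<T$, so the conditional-independence hypothesis holds trivially at every $t\in[0,T]$; yet for any $t<T$ and $C=\{\infty\}\times\{\infty\}\in\mathscr{B}(([0,t]\cup\{\infty\})^2)$ the left-hand side of~\eqref{eq:conditional distribution} equals $1/2$ while $K(\omega,\{\infty\})\,L(\omega,\{\infty\})=1/4$. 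Note that the paper's proof has the same blind spot: the factorization it ``readily checks'' covers preimages of intervals $]s_1,t_1]$ with $t_1\in[0,t]\cup\{\infty\}$, i.e.\ threshold events $\{\sigma>s\}$ with $s\in[0,t]$, and these never isolate $\{\sigma=\infty\}$, whereas the declared generator $\mathscr{E}$ does contain $\{\infty\}$ and the disconnected sets $]s,\tilde{s}]\cup\{\infty\}$ with $\tilde{s}<t$. To make the argument sound you should either restrict to $t=T$ (the case used in the Remark following the lemma), or strengthen the definition of conditional independence to thresholds $s_i\in[0,T]$, or impose an immersion condition such as~\eqref{eq:condition on the survival process}, which holds in the paper's constructed examples; under any of these your proof then goes through.
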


We conclude with the following integral representation within conditional expectations, which extends Proposition 5.11 in~\cite{BieRut02}.

\begin{Proposition}\label{pr:conditional integral representation}
Let $s\in [0,T[$ and $\sigma,\tau\in\mathscr{T}$. Assume that $\tilde{X}\in\tilde{\mathscr{S}}$ admits bounded left-continuous paths such that the following three conditions hold:
\begin{enumerate}[(i)]
\item $G(\sigma)$ is right-continuous and of finite variation and $\sigma$, $\tau$ are $(\mathscr{F}_{t})_{t\in [0,T]}$-conditionally independent.
\item There exists an $(\mathscr{F}_{t})_{t\in [0,T]}$-progressively measurable process $X$ with bounded paths satisfying $X_{t} = \tilde{X}_{t}$ a.s.~on $\{t < \sigma \leq T\}$ for each $t\in ]s,T]$.
\item The paths $G(\tau)(\omega)$ and $X(\omega)$ are left-continuous except at countably many points, excluding any discontinuity point of $G(\sigma)(\omega)$, for each $\omega\in\Omega$.
\end{enumerate}
If $\sup_{t\in ]s,T]} |\tilde{X}_{t}|\mathbbm{1}_{\{s < \sigma\leq T\wedge\tau\}}$ and $\sup_{t\in ]s,T]}|X_{t}|G_{t}(\tau)(V_{T}(\sigma)-V_{s}(\sigma))$ are integrable, where $V(\sigma)$ is the variation process of $G(\sigma)$, then
\begin{equation*}
E[\tilde{X}^{\sigma}_{T}\mathbbm{1}_{\{s < \sigma\leq T\wedge\tau\}}|\mathscr{F}_{s}] = - E\bigg[\int_{]s,T]} X_{t}G_{t}(\tau)\,dG_{t}(\sigma)\,\bigg|\,\mathscr{F}_{s}\bigg]\quad\text{a.s.}
\end{equation*}
\end{Proposition}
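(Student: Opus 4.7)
The plan is to match both sides as limits of a common Riemann-type discretization over partitions $\pi_n = \{s = t^n_0 < \dots < t^n_{k_n} = T\}$ of $]s,T]$ with $|\pi_n| \to 0$; by condition (iii) I may choose the partition points to avoid, for $P$-a.e.~$\omega$, the countable exceptional sets where $X(\omega)$ or $G(\tau)(\omega)$ fails to be left-continuous. For the LHS, left-continuity of $\tilde X$ at $\sigma$ together with the replacement $\tilde X_{t^n_i} = X_{t^n_i}$ on $\{t^n_i < \sigma \leq T\}$ supplied by (ii), and the dominating random variable $\sup_t|\tilde X_t|\mathbbm{1}_{\{s < \sigma \leq T\wedge\tau\}}$, let me write
\begin{equation*}
E[\tilde X^\sigma_T \mathbbm{1}_{\{s<\sigma\leq T\wedge\tau\}}\mid\mathscr{F}_s] = \lim_n \sum_i E\bigl[X_{t^n_i}\mathbbm{1}_{\{t^n_i<\sigma\leq t^n_{i+1},\,\sigma\leq\tau\}}\bigm|\mathscr{F}_s\bigr].
\end{equation*}
For the RHS, right-continuity and finite variation of $G(\sigma)$ together with left-continuity of $XG(\tau)$ away from a countable set disjoint from the jumps of $G(\sigma)$ imply that $-\int_{]s,T]}X_tG_t(\tau)\,dG_t(\sigma)$ is the pathwise limit of the left-endpoint Stieltjes sums $\sum_i X_{t^n_i}G_{t^n_i}(\tau)(G_{t^n_i}(\sigma)-G_{t^n_{i+1}}(\sigma))$, and the integrability of $\sup_t|X_t|G_t(\tau)(V_T(\sigma)-V_s(\sigma))$ permits interchange with $E[\,\cdot\mid\mathscr{F}_s]$.

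The core of the argument is then the summand-by-summand identity, up to a mesh-vanishing error,
\begin{equation*}
E\bigl[\mathbbm{1}_{\{t^n_i<\sigma\leq t^n_{i+1},\,\sigma\leq\tau\}}\bigm|\mathscr{F}_{t^n_i}\bigr] \approx G_{t^n_i}(\tau)\,(G_{t^n_i}(\sigma)-G_{t^n_{i+1}}(\sigma)).
\end{equation*}
I would first tower through $\mathscr{F}_T$ -- the only value of $t$ for which Lemma~\ref{le:conditional distribution} describes the joint law of $(\sigma,\tau)$ on all of $([0,T]\cup\{\infty\})^2$ -- to express the inner conditional probability as $\int_{]t^n_i,t^n_{i+1}]} K(du)\,L([u,\infty])$, with $K,L$ the marginal regular conditional distributions of $\sigma$ and $\tau$ given $\mathscr{F}_T$ furnished by conditional independence (i). Iterating the conditioning on $\mathscr{F}_{t^n_{i+1}}$ and then $\mathscr{F}_{t^n_i}$ recognises $\int_{]t^n_i,t^n_{i+1}]} K(du)$ as the $(\mathscr{F}_t)$-supermartingale increment $G_{t^n_i}(\sigma)-G_{t^n_{i+1}}(\sigma)$ (using the defining property of $G(\sigma)$ from~\eqref{eq:survival process}), and, for $u\in]t^n_i,t^n_{i+1}]$, identifies $L([u,\infty])$ with $G_{t^n_i}(\tau)$ up to left-continuity at $t^n_i$, which holds off a null set by the choice of partition.

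The main obstacle is the rigorous comparison of the $\mathscr{F}_T$-measurable $L(\cdot,[u,\infty])$ with the $\mathscr{F}_u$-measurable $G_u(\tau)$ \emph{inside} the Stieltjes integration against $dG(\sigma)$: condition (iii) is indispensable here, since it pins the discontinuities of $G(\tau)$ (and of $X$) away from those of the integrator and thus prevents an $O(1)$ boundary residual from mismatched one-sided limits, enabling a Vitali-type convergence argument along the refining partitions. A secondary technicality is the simultaneous management of the null exceptional sets -- one per partition point and per $u$-value -- handled by standard diagonalisation together with the $L^1$ dominating functions furnished in the hypotheses.
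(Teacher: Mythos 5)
Your outline tracks the paper's at the skeleton level — left-endpoint partition sums, conditioning at the partition times, dominated convergence via the integrable envelopes — but the device that makes the discretised identity \emph{exact} is missing, and the justification you offer for the resulting gap points at the wrong hypothesis. The paper does not keep $\mathbbm{1}_{\{\sigma\leq\tau\}}$ inside the discretised conditional probabilities; it rounds $\tau$ up to the partition, defining $\tau_n:=t_{j+1,n}$ on $\{t_{j,n}<\tau\leq t_{j+1,n}\}$ and $\tau_n:=\infty$ on $\{\tau=\infty\}$. With $\tau_n$ in place, on the slab $\{\sigma\in\,]t_{i,n},t_{i+1,n}]\}$ one has the exact set identity $\{\sigma\leq\tau_n\}=\{\tau>t_{i,n}\}$, so the summand is $P(t_{i,n}<\sigma\leq t_{i+1,n},\tau>t_{i,n}\mid\mathscr{F}_{t_{i,n}})$ and factors through $G_{t_{i,n}}(\tau)$ and the $G(\sigma)$-increment with no residual; the passage $\tau_n\to\tau$ occurs only at the very end by dominated convergence. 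Your plan keeps $\{\sigma\leq\tau\}$ and asserts the summand matches $G_{t^n_i}(\tau)\big(G_{t^n_i}(\sigma)-G_{t^n_{i+1}}(\sigma)\big)$ up to a mesh-vanishing error; the discrepancy lives on $\{t^n_i<\tau<\sigma\leq t^n_{i+1}\}$, one event per cell, and no argument is given that the sum over $i$ of these contributions vanishes in the limit.

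More seriously, the route you sketch for the summand — tower to $\mathscr{F}_T$, factor via the kernels $K,L$ supplied by Lemma~\ref{le:conditional distribution}, then iterate the conditioning — runs into precisely the obstruction you single out, namely that $L(\cdot,[u,\infty])$ is $\mathscr{F}_T$-measurable while $G_u(\tau)$ is $\mathscr{F}_u$-measurable, and you claim condition~(iii) is what resolves it. That is a misattribution. Condition~(iii) is a path-regularity hypothesis; its only role in the paper's proof is to guarantee that the left-endpoint sums $\sum_i X_{t_{i,n}}G_{t_{i,n}}(\tau)\big(G_{t_{i,n}}(\sigma)-G_{t_{i+1,n}}(\sigma)\big)$ converge $\omega$-wise to $\int_{]\tilde s,T]}X_tG_t(\tau)\,dG_t(\sigma)$, by keeping the countably many left-discontinuities of $X(\omega)$ and $G(\tau)(\omega)$ off the jumps of the integrator $G(\sigma)(\omega)$. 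It says nothing about reconciling $\mathscr{F}_T$-measurable kernels with $\mathscr{F}_u$-measurable survival processes. The paper never faces that reconciliation because it never lifts the computation to $\mathscr{F}_T$: all conditioning happens at the partition points, where $G_{t_{i,n}}(\tau)=P(\tau>t_{i,n}\mid\mathscr{F}_{t_{i,n}})$ is available directly, and Lemma~\ref{le:conditional distribution} is not invoked. A further minor omission: the paper partitions $[\tilde s,T]$ for fixed $\tilde s\in\,]s,T[$ and sends $\tilde s\downarrow s$ in a final dominated-convergence step, since condition~(ii) only identifies $X_t$ with $\tilde X_t$ on $\{t<\sigma\leq T\}$ for $t\in\,]s,T]$; your partitions start at $s$ from the outset.
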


\subsection{Construction of conditionally independent hitting times}\label{se:2.3}

For given $m\in\N$ let $X$ be an $[0,\infty]^{m}$-valued $(\mathscr{F}_{t})_{t\in [0,T]}$-adapted right-continuous process and $\xi$ be an $\mathbb{R}_{+}^{m}$-valued $\tilde{\mathscr{F}}_{0}$-measurable random vector that is independent of $\mathscr{F}_{T}$ such that $\xi_{1},\dots,\xi_{m}$ are independent.

We assume that the $i$-th coordinate process of $X$, denoted by $X^{(i)}$, is increasing, let $G_{i}$ be the survival function of $\xi_{i}$ and define a function $\tau_{i}$ on $\Omega$ with values in $[0,T]\cup\{\infty\}$ via 
\begin{equation*}
\tau_{i} := \inf\{ t\in [0,T]\,|\, X_{t}^{(i)} \geq \xi_{i}\}
\end{equation*}
for any $i\in\{1,\dots,m\}$. Then the hitting time $\tau_{i}$ does not need to be an $(\mathscr{F}_{t})_{t\in [0,T]}$-stopping time, as $\xi_{i}$ may fail to be $\mathscr{F}_{0}$-measurable. However, the following facts hold.

\begin{Lemma}\label{le:hitting time}
The functions $\tau_{1},\dots,\tau_{m}$ are $(\tilde{\mathscr{F}}_{t})_{t\in [0,T]}$-stopping times that are conditionally independent relative to $(\mathscr{F}_{t})_{t\in [0,T]}$ such that $\{\tau_{j} > t\} = \{X_{t}^{(j)} < \xi_{j}\}$ and
\begin{equation*}
P(\tau_{1} > s_{1},\dots,\tau_{j} > s_{j}|\mathscr{F}_{t}) = G_{1}(X_{s_{1}}^{(1)})\cdots G_{j}(X_{s_{j}}^{(j)})\quad\text{a.s.}
\end{equation*}
for any $j\in\{1,\dots,m\}$, each $t\in [0,T]$ and every $s_{1},\dots,s_{j}\in [0,t]$.
\end{Lemma}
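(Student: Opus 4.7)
The plan is to pin down the set identity first, after which the stopping-time property and the product formula follow quickly. I would begin by establishing $\{\tau_j > t\} = \{X_t^{(j)} < \xi_j\}$ for every $j \in \{1,\dots,m\}$ and $t \in [0,T]$. The inclusion $\{X_t^{(j)} \geq \xi_j\} \subset \{\tau_j \leq t\}$ is immediate, since $t$ itself then lies in the hitting set. For the converse, assume $\tau_j \leq t$: if $\tau_j < t$, pick $s_n \downarrow \tau_j$ with $X_{s_n}^{(j)} \geq \xi_j$, so that eventually $s_n \leq t$ and monotonicity of $X^{(j)}$ gives $X_t^{(j)} \geq \xi_j$; if $\tau_j = t$, choose $s_n \downarrow t$ with $X_{s_n}^{(j)} \geq \xi_j$ and invoke right-continuity of $X^{(j)}$ to pass to the limit.

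The stopping-time assertion is then immediate, since Step~1 rewrites $\{\tau_j \leq t\} = \{X_t^{(j)} \geq \xi_j\}$ as an event determined by $X_t^{(j)}$ and $\xi_j$, both of which are $\tilde{\mathscr{F}}_t$-measurable under the working inclusion $\mathscr{F}_t \subset \tilde{\mathscr{F}}_t$ combined with the $\tilde{\mathscr{F}}_0$-measurability of $\xi$.

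Next I would compute the joint conditional survival probability by a freezing argument. Fix $t \in [0,T]$ and $s_1,\dots,s_j \in [0,t]$, and set $Y_i := X_{s_i}^{(i)}$, which is $\mathscr{F}_t$-measurable by adaptedness of $X$. Step~1 gives
\[
\{\tau_1 > s_1,\dots,\tau_j > s_j\} = \{Y_1 < \xi_1,\dots,Y_j < \xi_j\}.
\]
Since $\xi$ is independent of $\mathscr{F}_T$, hence of $\mathscr{F}_t$, and the coordinates $\xi_1,\dots,\xi_m$ are mutually independent, the standard independence/freezing identity yields
\[
P(Y_1 < \xi_1,\dots,Y_j < \xi_j \,|\, \mathscr{F}_t) = g(Y_1,\dots,Y_j) = \prod_{i=1}^{j} G_i(Y_i) \quad \text{a.s.,}
\]
with $g(y_1,\dots,y_j) := P(\xi_1 > y_1,\dots,\xi_j > y_j) = \prod_i G_i(y_i)$. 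Specialising to $j = 1$ gives the marginal $P(\tau_i > s_i \,|\, \mathscr{F}_t) = G_i(X_{s_i}^{(i)})$, and taking $j = m$ shows that the full joint formula factorises as the product of marginals, which is precisely the conditional independence of $\tau_1,\dots,\tau_m$ relative to $(\mathscr{F}_t)_{t \in [0,T]}$ in the sense of the Definition preceding the lemma.

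The only delicate point I anticipate is the boundary case $\tau_j = t$ in Step~1, where right-continuity of $X^{(j)}$ must actually be used since monotonicity alone does not cover it; everywhere else the argument is mechanical, as the independence of $\xi$ from the default-free information reduces the joint computation to an evaluation of the product of the threshold survival functions.
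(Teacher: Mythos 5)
Your proposal is correct and follows essentially the same route as the paper's proof: establish the pathwise identity $\{\tau_j \leq t\} = \{X_t^{(j)} \geq \xi_j\}$ via monotonicity and right-continuity of $X^{(j)}$, then deduce the stopping-time property and apply the freezing/independence lemma to compute the joint conditional survival probability. The paper states the set identity in one line, whereas you spell out the boundary case $\tau_j = t$, but the substance is identical.
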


As a direct consequence, $\rho:=\min_{i\in\{1,\dots,m\}} \tau_{i}$ is an $(\tilde{\mathscr{F}}_{t})_{t\in [0,T]}$-stopping time and each $(\mathscr{F}_{t})_{t\in [0,T]}$-survival process $G(\rho)$ of $\rho$ satisfies
\begin{equation}\label{eq:hitting time conditional distribution}
G_{s}(\rho) = P(\rho > s|\mathscr{F}_{t}) = G_{1}(X_{s}^{(1)})\cdots G_{m}(X_{s}^{(m)})\quad\text{a.s.}
\end{equation}
for any $s,t\in [0,T]$ with $s\leq t$. Further relevant properties may be inferred by using $a\in\R_{+}^{m}$ and $b\in [0,\infty]^{m}$ defined coordinatewise via $a_{i}:=\essinf\xi_{i}$ and $b_{i}:=\esssup\xi_{i}$.

\begin{Lemma}\label{le:minimal hitting time}
For each $s\in [0,T]$ the following three assertions hold:
\begin{enumerate}[(i)]
\item $G_{s}(\rho) > 0$ a.s.~$\Leftrightarrow$ $X_{s}^{(i)} < b_{i}$ a.s.~for all $i\in\{1,\dots,m\}$.
\item $\rho\leq s$ a.s.~$\Leftrightarrow$ $X_{s}^{(i)}\geq b_{i}$ for some $i\in\{1,\dots,m\}$ a.s. Similarly,
\begin{equation*}
\rho > s\quad\text{a.s.}\quad \Leftrightarrow\quad X_{s}^{(i)} \leq a_{i}\quad\text{a.s.},\quad\text{if}\quad \xi_{i} > a_{i}\quad\text{a.s.},
\end{equation*}
and $X_{s}^{(i)} < a_{i}$ a.s., if $P(\xi_{i} = a_{i}) > 0$, for any $i\in\{1,\dots,m\}$.
\item $\rho \neq s$ a.s.~whenever $s > 0$,  $X$ is a.s.~continuous and $G_{1},\dots,G_{m}$ are continuous. 
\end{enumerate}
\end{Lemma}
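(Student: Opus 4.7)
The plan is to derive each part directly from the explicit survival formula
\begin{equation*}
G_s(\rho) = P(\rho > s \mid \mathscr{F}_s) = G_1(X_s^{(1)})\cdots G_m(X_s^{(m)})\quad\text{a.s.}
\end{equation*}
that combines Lemma~\ref{le:hitting time} with equation~\eqref{eq:hitting time conditional distribution}, together with the elementary facts that $G_i(x) = P(\xi_i > x)$ vanishes precisely when $x \geq b_i$ and equals $1$ precisely when $\xi_i > x$ a.s. For (i), $G_s(\rho) > 0$ a.s.\ is then equivalent to each factor $G_i(X_s^{(i)})$ being a.s.\ strictly positive, and hence, by the characterisation of $\{G_i = 0\}$, to $X_s^{(i)} < b_i$ a.s.\ for every $i \in \{1,\dots,m\}$.

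For (ii) I would take expectations in the displayed identity to obtain $P(\rho > s) = E\bigl[\prod_{i=1}^{m} G_i(X_s^{(i)})\bigr]$. Since the integrand takes values in $[0,1]$, the condition $\rho \leq s$ a.s.\ is equivalent to this product vanishing a.s., which by factorisation exactly says that for $P$-a.e.\ $\omega$ at least one coordinate satisfies $X_s^{(i)}(\omega) \geq b_i$. Dually, $\rho > s$ a.s.\ is equivalent to each $G_i(X_s^{(i)})$ being a.s.\ equal to $1$, and I would split according to whether $\xi_i$ has an atom at $a_i$: if $\xi_i > a_i$ a.s., then $a_i = \essinf\xi_i$ forces $G_i(x) < 1$ for $x > a_i$ while $G_i(a_i) = 1$, so the condition becomes $X_s^{(i)} \leq a_i$ a.s.; if instead $P(\xi_i = a_i) > 0$, then $G_i(a_i) < 1$ and the condition becomes $X_s^{(i)} < a_i$ a.s.

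For (iii), fix $s > 0$. Since each $X^{(i)}$ is monotone, a.s.\ continuity implies that $t \mapsto X_t^{(i)}(\omega)$ is continuous on $[0,T]$ for $P$-a.e.\ $\omega$; combined with continuity of $G_i$, the random function $t \mapsto \prod_{i=1}^{m} G_i(X_t^{(i)})$ has a.s.\ continuous paths on $[0,T]$ and is bounded by~$1$, so dominated convergence yields that $t \mapsto P(\rho > t) = E\bigl[\prod_{i=1}^{m} G_i(X_t^{(i)})\bigr]$ is continuous, forcing $P(\rho = s) = 0$. I do not anticipate substantial obstacles; the only point requiring real care is the case distinction in (ii) around whether $a_i$ is an atom of $\xi_i$, as this switches the characterisation of $G_i(x) = 1$ from $x \leq a_i$ to $x < a_i$.
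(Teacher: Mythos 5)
Your proposal is correct and follows essentially the same route as the paper's proof: all three parts are derived from the explicit survival formula $G_s(\rho) = \prod_{i=1}^m G_i(X_s^{(i)})$ combined with the elementary characterisations $\{G_i > 0\} = {]{-}\infty,b_i[}$ from the essential supremum and the two-case description of $\{G_i = 1\}$ from the essential infimum, and part (iii) reduces to a dominated convergence argument. The only (inconsequential) difference is in (iii): the paper works with an increasing sequence $s_n \uparrow s$ and uses only that $G_i(X^{(i)})$ is a.s.\ left-continuous, whereas you establish full continuity of $t\mapsto P(\rho>t)$; both versions correctly conclude from the vanishing left jump of the survival function that $P(\rho=s)=0$ for $s>0$.
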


\begin{Example}\label{ex:integral form}
Let $\hat{x}\in\R_{+}^{m}$ and $\lambda$ be an $[0,\infty]^{m}$-valued process that is progressively measurable relative to $(\mathscr{F}_{t})_{t\in [0,T]}$ such that
\begin{equation}\label{eq:integral form}
X_{t} = \hat{x} + \int_{0}^{t}\lambda_{s}\,ds\quad\text{for all $t\in [0,T]$.}
\end{equation}
Then $X$ is left-continuous, by monotone convergence, and the assumed right-continuity of $X$ holds if and only if for every $\omega\in\Omega$ there is $t_{\omega}\in ]0,T]$ such that
\begin{equation*}
\sum_{i=1}^{m}\int_{0}^{t_{\omega}}\lambda_{s}^{(i)}(\omega)\,ds < \infty.
\end{equation*}
In addition, Lemma~\ref{le:minimal hitting time} entails the following three statements:
\begin{enumerate}[(1)]
\item Assume that $b_{i} = \infty$ for all $i\in\{1,\dots,m\}$. Then $G_{s}(\rho) > 0$ a.s.~for any $s\in [0,T]$ if and only if $\lambda$ admits a.s.~(Lebesgue) integrable paths, and
\begin{equation*}
\rho < \infty\quad\text{a.s.}\quad\Leftrightarrow\quad \sum_{i=1}^{m}\int_{0}^{T}\lambda_{t}^{(i)}\,dt = \infty\quad\text{a.s.}
\end{equation*}
\item If $\hat{x}_{i}\geq a_{i}$ for any $i\in\{1,\dots,m\}$ and the event of all $\omega\in\Omega$ with $\int_{0}^{t_{\omega}}\lambda_{s}^{(i)}(\omega)\,ds > 0$ for all $i\in\{1,\dots,m\}$ has positive probability, then $P(\rho > s) < 1$ for any $s\in ]0,T]$.
\item $\rho \neq 0$ a.s.~$\Leftrightarrow$ For each $i\in\{1,\dots,m\}$ we have $\hat{x}_{i}\leq a_{i}$ with equality if and only if $\xi_{i} > a_{i}$ a.s. Further, $\rho\neq s$ a.s.~for all $s\in ]0,T]$ if $G_{1},\dots,G_{m}$ are continuous.
\end{enumerate}
\end{Example}

Note that if $X_{0}^{(i)}\geq a_{i}$ a.s.~for some $i\in\{1,\dots,m\}$ and $\xi_{i}$ is a.s.~constant, which is equivalent to the condition that $a_{i} = b_{i}$, then $\rho = 0$ a.s., since $P(\rho > 0) \leq G_{i}(b_{i}) = 0$. For this reason, let us now assume that $a_{i} < b_{i}$ for all $i\in\{1,\dots,m\}$.

We define an event in $\mathscr{F}_{t}$ by $\Lambda_{t}:=\bigcap_{i=1}^{m}\{X_{t}^{(i)} < b_{i}\}$ for each $t\in [0,T]$. While $\{\rho  > t\}$ is included in $\Lambda_{t}$, we have $P(\rho > t) > 0$ if and only if $P(\Lambda_{t}) > 0$, by Lemmas~\ref{le:hitting time} and~\ref{le:minimal hitting time}. Based on these considerations, we provide a  formula for the survival function of $\rho$.

\begin{Proposition}\label{pr:density formula}
Let $\hat{x}\in\R_{+}^{m}$ and $\lambda$ be some $[0,\infty]^{m}$-valued $(\mathscr{F}_{t})_{t\in [0,T]}$-progressively measurable process satisfying~\eqref{eq:integral form} such that for each $\omega\in\Omega$ we have
\begin{equation}\label{eq:positivtity and finiteness condition}
0 < \int_{0}^{t_{\omega}}\lambda_{s}^{(i)}(\omega)\,ds < \infty \quad\text{for all $i\in\{1,\dots,m\}$ and some $t_{\omega}\in ]0,T]$}.
\end{equation}
If $\hat{x}_{i}\in [a_{i},b_{i}[$ and $G_{i}$ is continuously differentiable on $]a_{i},b_{i}[$ for any $i\in\{1,\dots,m\}$, then
\begin{align*}
P(\rho > t) &= G_{1}(\hat{x}_{1})\cdots G_{m}(\hat{x}_{m})P(\Lambda_{t})\\
&\quad + \int_{0}^{t}E\bigg[G_{1}(X_{s}^{(1)})\cdots G_{m}(X_{s}^{(m)})\sum_{i=1}^{m}\lambda_{s}^{(i)}\bigg(\frac{G_{i}'}{G_{i}}\bigg)(X_{s}^{(i)});\Lambda_{t}\bigg]\,ds
\end{align*}
for every $t\in[0,T]$.
\end{Proposition}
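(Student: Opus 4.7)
The starting point is the identity $P(\rho>t) = E[G_t(\rho)]$, which combined with~\eqref{eq:hitting time conditional distribution} (taking $s=t$) gives
\[
P(\rho>t) = E\bigg[\prod_{i=1}^m G_i(X_t^{(i)})\bigg].
\]
Since $\xi_i$ has essential supremum $b_i$, the survival function $G_i$ vanishes on $[b_i,\infty]$, so the integrand is zero on $\Lambda_t^c$. Hence the right-hand side equals $E\big[\prod_{i=1}^m G_i(X_t^{(i)});\Lambda_t\big]$, and the proof reduces to establishing the pathwise integral identity
\[
\prod_{i=1}^m G_i(X_t^{(i)}) = \prod_{i=1}^m G_i(\hat{x}_i) + \int_0^t \prod_{j=1}^m G_j(X_s^{(j)})\sum_{i=1}^m \lambda_s^{(i)}\frac{G_i'}{G_i}(X_s^{(i)})\,ds\quad\text{on }\Lambda_t,
\]
after which multiplying by $\mathbbm{1}_{\Lambda_t}$ and applying Fubini delivers the stated formula.

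To derive this identity, I would observe that on $\Lambda_t$ the monotonicity of each $X^{(i)}$ confines $X_s^{(i)}$ to $[\hat{x}_i,X_t^{(i)}]\subset[a_i,b_i[$ for every $s\in[0,t]$, so every factor $G_j(X_s^{(j)})$ is strictly positive there. Since $X^{(i)}$ is absolutely continuous with Radon--Nikodym derivative $\lambda^{(i)}$ and $G_i$ is monotone and $C^1$ on $]a_i,b_i[$, the composition $G_i\circ X^{(i)}$ is absolutely continuous on $[0,t]$ with a.e.\ derivative $G_i'(X_s^{(i)})\lambda_s^{(i)}$. The iterated Leibniz rule for absolutely continuous functions then yields
\[
\prod_{i=1}^m G_i(X_t^{(i)}) - \prod_{i=1}^m G_i(\hat{x}_i) = \int_0^t \sum_{i=1}^m G_i'(X_s^{(i)})\lambda_s^{(i)}\prod_{j\neq i}G_j(X_s^{(j)})\,ds,
\]
and factoring out the strictly positive $\prod_{j=1}^m G_j(X_s^{(j)})$ produces the required expression. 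Integrability is immediate from $G_j\leq 1$ and $\int_0^t |G_i'(X_s^{(i)})|\lambda_s^{(i)}\,ds\leq G_i(\hat{x}_i)-G_i(X_t^{(i)})\leq 1$, which also permits the use of Fubini.

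The main technical obstacle is the boundary case $\hat{x}_i=a_i$, where $G_i'$ is a priori undefined at the starting point and $X_s^{(i)}$ may remain equal to $a_i$ on an initial subinterval $[0,s_0]$ on which the cumulative integral of $\lambda^{(i)}$ is still zero. The resolution is that on any such subinterval $\lambda_s^{(i)}=0$ for Lebesgue-a.e.~$s$, so the product $G_i'(X_s^{(i)})\lambda_s^{(i)}$ can be set to zero there without affecting the integral, while for $s>s_0$ we have $X_s^{(i)}>a_i$ and the classical chain rule applies. Together with the right-continuity of $G_i$ at $a_i$ inherited from its being a survival function, this ensures that $G_i\circ X^{(i)}$ is genuinely absolutely continuous on all of $[0,t]$, so the Leibniz step above is rigorous and the argument closes.
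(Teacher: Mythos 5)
Your proposal uses the same core idea as the paper---apply the chain/Leibniz rule to the product $\varphi(X_s)=\prod_{i=1}^m G_i(X_s^{(i)})$ on the event $\Lambda_t$ and then pass to expectations via Fubini---but you arrange the boundary technicality differently, and it is worth contrasting the two routes. The paper applies the Fundamental Theorem of Calculus only on $]s,t]$ for a fixed $s>0$, where on $\Lambda_t$ the path $X$ stays inside the open box $]a_1,b_1[\times\cdots\times]a_m,b_m[$ so that each $G_i'$ is genuinely defined and everything is classical; it then takes expectations, obtains the identity $E[\varphi(X_s);\Lambda_t]=P(\rho>t)-\int_s^t E[\cdots]\,d\tilde s$, and only at the very end sends $s\downarrow 0$ by monotone convergence and right-continuity of the $G_i$'s. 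You instead try to establish the pathwise absolute continuity of $\prod_i G_i(X^{(i)})$ on the closed interval $[0,t]$ before ever taking expectations, which forces you into the delicate analysis at $\hat{x}_i=a_i$ that you correctly identify. Your resolution works: on the initial interval where $X^{(i)}$ is stuck at $a_i$ (if any), $\lambda^{(i)}=0$ a.e.\ so the integrand can be declared zero; off that interval the classical chain rule applies. One small caveat is that your integrability bound $\int_0^t|G_i'(X_s^{(i)})|\lambda_s^{(i)}\,ds\leq G_i(\hat{x}_i)-G_i(X_t^{(i)})$ looks circular as stated, since it uses the FTC representation that the absolute continuity on $[0,t]$ is supposed to justify. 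It can be made rigorous by an $\varepsilon$-cutoff: apply FTC on $[\varepsilon,t]$ to get $\int_\varepsilon^t|G_i'(X_s^{(i)})|\lambda_s^{(i)}\,ds=G_i(X_\varepsilon^{(i)})-G_i(X_t^{(i)})\leq 1$ and let $\varepsilon\downarrow 0$ by monotone convergence and right-continuity of $G_i$; this yields both the integrability and the FTC on $[0,t]$ simultaneously. The paper's arrangement sidesteps this entirely by deferring the limit $s\downarrow 0$ to the level of expectations, which is the cleaner route, but your pathwise version is correct once that cutoff step is spelled out.
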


\begin{Remark}\label{re:pathwise stated condition}
The pathwise stated condition~\eqref{eq:positivtity and finiteness condition} is satisfied if and only if for any $\omega\in\Omega$ there are $t_{\omega}\in ]0,T]$ and $s_{\omega}\in ]0,t_{\omega}]$ such that
\begin{equation*}
\lambda(\omega) > 0\quad\text{a.e.~on $]0,s_{\omega}[$}\quad\text{and}\quad \text{$\lambda(\omega)$ is integrable on $[0,t_{\omega}]$}.
\end{equation*}
For instance, this is the case is if $\lambda$ is $]0,\infty[^{m}$-valued and admits integrable paths.
\end{Remark}

To conclude our analysis, let us impose the gamma distribution on $\xi_{1},\dots,\xi_{m}$. This includes the hitting times considered in~\cite{BriFraPal19} as special case, by choosing an exponential distribution with mean one.

\begin{Example}\label{ex:gamma distributed hitting times}
For each $i\in\{1,\dots,m\}$ let $\xi_{i}$ be gamma distributed with shape $\alpha_{i} > 0$ and rate $\beta_{i} > 0$. That is, its survival function and the gamma function $\Gamma$ satisfy
\begin{equation*}
G_{i}(x) = \frac{\beta_{i}^{\alpha_{i}}}{\Gamma(\alpha_{i})}\int_{x}^{\infty} y^{\alpha_{i} - 1}e^{-\beta_{i} y}\,dy\quad\text{for all $x\in\R_{+}$.}
\end{equation*}
We suppose that $\lambda$ is an $[0,\infty]^{m}$-valued $(\mathscr{F}_{t})_{t\in [0,T]}$-progressively measurable process such that $X_{t} = \int_{0}^{t}\lambda_{s}\,ds$ for all $t\in [0,T]$ and~\eqref{eq:positivtity and finiteness condition} holds. Then the formula~\eqref{eq:hitting time conditional distribution} yields that
\begin{equation*}
P(\rho > s|\mathscr{F}_{t}) = \frac{\gamma(\alpha_{1},\beta_{1}X_{s}^{(1)})}{\Gamma(\alpha_{1})}\cdots \frac{\gamma(\alpha_{m},\beta_{m}X_{s}^{(m)})}{\Gamma(\alpha_{m})}\quad\text{a.s.}
\end{equation*}
for any $s,t\in [0,T]$ with $s\leq t$, where $\gamma:]0,\infty[^{2}\rightarrow ]0,\infty[$, $\gamma(\alpha,x):=\int_{x}^{\infty}y^{\alpha-1}e^{-y}\,dy$ is the upper incomplete gamma function. From Example~\ref{ex:integral form} we in particular infer that
\begin{equation*}
P(\rho > s) < 1\quad\text{for all $s\in ]0,T]$}\quad\text{and}\quad \rho\neq s\quad\text{a.s.}\quad\text{for any $s\in [0,T]$.}
\end{equation*}
Moreover, if $\lambda$ admits integrable paths, then $P(\rho = \infty) > 0$ and Proposition~\ref{pr:density formula} entails that the distribution of $\rho$ decomposes into its continuous and discrete part. Namely,
\begin{equation*}
P(\rho\in B) = \int_{B\cap [0,T]}\varphi_{\rho}(s)\,ds + \bigg(1 - \int_{0}^{T}\varphi_{\rho}(s)\,ds\bigg)\delta_{\infty}(B)
\end{equation*}
for any $B\in\mathscr{B}([0,T]\cup\{\infty\})$ with the measurable integrable function $\varphi_{\rho}:[0,T]\rightarrow [0,\infty]$ given by
\begin{equation*}
\varphi_{\rho}(s) := E\bigg[\frac{\gamma(\alpha_{1},\beta_{1}X_{s}^{(1)})}{\Gamma(\alpha_{1})}\cdots \frac{\gamma(\alpha_{m},\beta_{m}X_{s}^{(m)})}{\Gamma(\alpha_{m})}\sum_{i=1}^{m}\lambda_{s}^{(i)}\frac{\beta_{i}^{\alpha_{i}}(X_{s}^{(i)})^{\alpha_{i}-1}}{\gamma(\alpha_{i},\beta_{i}X_{s}^{(i)})}e^{-\beta_{i}X_{s}^{(i)}}\bigg].
\end{equation*}
\end{Example}

\section{A financial market model with default}\label{se:3}

We aim to evaluate a derivative contract between an \emph{investor} $\mathcal{I}$ and a \emph{counterparty} $\mathcal{C}$, both considered as financial entities, with a special focus on the case that $\mathcal{I}$ stands for an \emph{investment bank} $\mathcal{B}$.

\subsection{Model specifications}\label{se:3.1}

In the sequel, we interpret the two filtrations $(\mathscr{F}_{t})_{t\in [0,T]}$ and $(\tilde{\mathscr{F}}_{t})_{t\in [0,T]}$ as the \emph{temporal developments of the default-free information} and the \emph{whole available information} on an underlying financial market, respectively.

We use two $[0,T]\cup\{\infty\}$-valued random variables $\tau_{\mathcal{I}}$ and $\tau_{\mathcal{C}}$ to model the respective \emph{default times} of the investor $\mathcal{I}$ and the counterparty $\mathcal{C}$. Then $\tau:=\tau_{\mathcal{I}}\wedge\tau_{\mathcal{C}}$ stands for the \emph{time of a party to default first}. By using the notation in~\eqref{eq:stopping time sigma-field}, we require that
\begin{equation}\label{eq:market condition 1}
\mathscr{F}_{t}\subset \tilde{\mathscr{F}}_{t}\subset\mathscr{F}_{t}\vee\mathscr{H}_{t}^{\tau_{\mathcal{I}}}\vee\mathscr{H}_{t}^{\tau_{\mathcal{C}}}\quad\text{for all $t\in [0,T]$.}
\end{equation}
Thus, the available market information could yield no knowledge about the first time of default and it may fail to give any insight into the respective default times of $\mathcal{I}$ and $\mathcal{C}$.

In our continuous-time setting we assume that the distributions of $\tau_{\mathcal{I}}$ and $\tau_{\mathcal{C}}$ admit at most one atom, which is at infinity, and both parties cannot default simultaneously. That is, for any $t\in [0,T]$ we have
\begin{equation}\label{eq:market condition 2}
P(\tau_{\mathcal{I}}=t) = P(\tau_{\mathcal{C}} = t) = 0\quad\text{and}\quad P(\tau_{\mathcal{I}} = \tau_{\mathcal{C}}, \tau <\infty) = 0.
\end{equation}
The first condition implies that $\tau \neq t$ a.s.~for all $t\in [0,T]$. However, as $\{\tau_{\mathcal{I}} = \tau_{\mathcal{C}} = \infty\}$ $= \{\tau = \infty\}$ and we have made no restrictions on $\tilde{P}(\tau_{\mathcal{I}} = \infty)$ and $\tilde{P}(\tau_{\mathcal{C}}=\infty)$, both entities may not default at all. So, we allow for $\tilde{P}(\tau = \infty)\in [0,1]$.

\begin{Remark}
The event $\{\tau_{\mathcal{I}}=\tau_{\mathcal{C}},\tau < \infty\}$ of simultaneous default is a null set if $\tau_{\mathcal{I}}$ and $\tau_{\mathcal{C}}$ are $(\mathscr{F}_{t})_{t\in [0,T]}$-conditionally independent under $\tilde{P}$. Indeed, in this case Lemma~\ref{le:conditional distribution} gives
\begin{equation}\label{eq:simultaneous default}
\tilde{P}(\tau_{\mathcal{I}} = \tau_{\mathcal{C}},\tau < \infty) = \tilde{P}((\tau_{\mathcal{I}},\tau_{\mathcal{C}})\in\Delta) = \tilde{E}\bigg[\int_{[0,T]} K(\cdot,\{t\})\,L(\cdot,dt)\bigg] = 0
\end{equation}
for any two respective regular conditional probabilities $K$ and $L$ of $\tau_{\mathcal{I}}$ and $\tau_{\mathcal{C}}$ under $\tilde{P}$ given $\mathscr{F}_{T}$, where $\Delta := \{(s,t)\in [0,T]^{2}\,|\, s = t\}$. Thereby, we note that
\begin{equation*}
K(\omega,\{t\}) = 0\quad\text{for all $(\omega,t)\in N^{c}\times [0,T]$}
\end{equation*}
and some null set $N\in\mathscr{F}_{T}$, since $\tilde{P}(\tau_{\mathcal{I}} = t|\mathscr{F}_{t}) = 0$ a.s.~for all $t\in [0,T]$ and $\mathscr{B}([0,T]\cup\{\infty\})$ is countably generated. This justifies that the expectation in~\eqref{eq:simultaneous default} vanishes.
\end{Remark}

Next, for any $(\mathscr{F}_{t})_{t\in [0,T]}$-progressively measurable process $\gamma$ with integrable paths we introduce an $]0,\infty[$-valued function $D(\gamma)$ on $[0,T]^{2}\times\Omega$ by 
\begin{equation*}
D_{s,t}(\gamma) := \exp\bigg(-\int_{s}^{t}\gamma_{\tilde{s}}\,d\tilde{s}\bigg),\quad\text{if $s\leq t$,}
\end{equation*}
and $D_{s,t}(\gamma):=1$, otherwise. Then the function $[0,T]^{2}\rightarrow ]0,\infty[$, $(s,t)\mapsto D_{s,t}(\gamma)(\omega)$ is continuous for any $\omega\in\Omega$ and $D_{s,t}(\gamma)$ is $\mathscr{F}_{t}$-measurable for all $s,t\in [0,T]$. Moreover, $D(\gamma)$ is bounded as soon as $\gamma$ is bounded from below. 

Let $r$ be an $(\mathscr{F}_{t})_{t\in [0,T]}$-progressively measurable process with integrable paths that represents the \emph{instantaneous risk-free interest rate}. Then $D_{s,t}(r)$ is the \emph{discount factor} from time $s\in [0,T]$ to $t\in [s,T]$. Put differently, $D_{s,t}(r)$ specifies the required amount to invest risk-free at time $s$, in order to receive $1$ unit of cash at time $t$.

We let $\tilde{P}$ be a \emph{local martingale measure} after a time $t_{0}\in [0,T]$ in the sense that any discounted price process of a traded non-dividend-paying risky asset is an $(\tilde{\mathscr{F}}_{t})_{t\in [t_{0},T]}$-local martingale. That is, there is a non-empty set of processes $\tilde{U}\in\tilde{\mathscr{S}}$, representing the price processes of all such assets, for which $[t_{0},T]\times\Omega\rightarrow\mathbb{R}$, $(t,\omega)\mapsto D_{0,t}(r)(\omega)\tilde{U}_{t}(\omega)$ is an $(\tilde{\mathscr{F}}_{t})_{t\in [t_{0},T]}$-local martingale under $\tilde{P}$. At all times, $\tilde{P}$ is ought to be equivalent to $P$.

Given the available market information, we will derive an equation for the \emph{value process}, denoted by $\tilde{\mathscr{V}}\in\tilde{\mathscr{S}}$, of a trading strategy that hedges the contract between $\mathcal{I}$ and $\mathcal{C}$ under $\tilde{P}$ that leads to no arbitrage. 

In the end, however, we seek a valuation that does not involve any knowledge of the default of any of the two parties, and the \emph{valuation equation} for $\tilde{\mathscr{V}}$ includes quantities that merely depend on its pre-default part in the following sense.

As introduced in~\eqref{eq:survival process}, let $G(\sigma)$ be an $(\mathscr{F}_{t})_{t\in [0,T]}$-survival process under $\tilde{P}$ of a random variable $\sigma$ with values in $[0,T]\cup\{\infty\}$, which is an $[0,1]$-valued $(\mathscr{F}_{t})_{t\in [0,T]}$-supermartingale under $\tilde{P}$ such that
\begin{equation*}
\tilde{P}(\sigma > t|\mathscr{F}_{t}) = G_{t}(\sigma)\quad\text{a.s.~for all $t\in [0,T]$}.
\end{equation*}
Let us call a process $\tilde{X}$ \emph{integrable up to time $\tau$} if $[0,T]\times\Omega\rightarrow\R$, $(t,\omega)\mapsto \tilde{X}_{t}(\omega)\mathbbm{1}_{\{\tau > t\}}(\omega)$ is integrable. By Corollary~\ref{co:identification}, this property is satisfied by $\tilde{X}\in\tilde{\mathscr{S}}$ if and only if there is $X\in\mathscr{S}$ such that $XG(\tau)$ is integrable and $X_{s} = \tilde{X}_{s} $ a.s.~on $\{\tau > s\}$ for each $s\in [0,T]$. In this case,
\begin{equation}\label{eq:pre-default version}
X_{s}G_{s}(\tau) = \tilde{E}[\tilde{X}_{s}\mathbbm{1}_{\{\tau > s\}}|\mathscr{F}_{s}]\quad\text{a.s.~for all $s\in [0,T]$}
\end{equation}
and we shall call $X$ a \emph{pre-default version} of $\tilde{X}$. If in addition $G_{s}(\tau) > 0$ a.s.~for every $s\in [0,T]$, which implies that the probability that neither $\mathcal{I}$ nor $\mathcal{C}$ defaults at any time is positive, then $X$ is unique up to a modification.

In this spirit, we will introduce valuation based on default-free information only and analyse any \emph{pre-default value process} $\mathscr{V}$ defined as pre-default version of $\tilde{\mathscr{V}}$, which in turn should be integrable up to time $\tau$.

\begin{Remark}\label{re:martingale property}
Any $(\mathscr{F}_{t})_{t\in [t_{0},T]}$-martingale $X$ under $\tilde{P}$, in the local or standard sense, also satisfies the respective $(\tilde{\mathscr{F}}_{t})_{t\in [t_{0},T]}$-martingale property if for both $i\in\{\mathcal{I},\mathcal{C}\}$ we have
\begin{equation}\label{eq:condition on the survival process}
G_{s}(\tau_{i}) = \tilde{P}(\tau_{i} > s|\mathscr{F}_{t})\quad\text{a.s.~for any $s,t\in [0,T]$ with $s\leq t$}
\end{equation}
and $\tau_{\mathcal{I}}$, $\tau_{\mathcal{C}}$ are $(\mathscr{F}_{t})_{t\in [0,T]}$-conditionally independent under $\tilde{P}$. Due to Lemma~\ref{le:hitting time}, these conditions are met in Example~\ref{ex:default time specification}, which we will consider after analysing the model. 
\end{Remark}

\subsection{Incorporation of all relevant cash flows, costs and benefits}\label{se:3.2}

Let us summarise all cash flows, costs and benefits that may impact the value of the contract between $\mathcal{I}$ and $\mathcal{C}$. These quantities are the \emph{contractual derivative cash flows}~\eqref{eq:cash flows and costs 1}, the \emph{costs and benefits of a collateral account}~\eqref{eq:cash flows and costs 2}, the \emph{funding costs and benefits}~\eqref{eq:cash flows and costs 3}, the \emph{repo costs and benefits associated to the hedging account}~\eqref{eq:cash flows and costs 4} and the \emph{cash flows arising on the default of one of the two parties}~\eqref{eq:cash flows and costs 5}.

Despite the contractual cash flows, all remaining quantities are allowed to depend on the value process $\tilde{\mathscr{V}}$ or its pre-default version $\mathscr{V}$. For a mathematical description we define a \emph{time-dependent random functional} on a set $\mathscr{D}$ in $\tilde{\mathscr{S}}$ to be a function
\begin{equation*}
F:[0,T]\times\Omega\times\mathscr{D}\rightarrow\R,\quad (t,\omega,X)\mapsto F_{t}(X)(\omega)
\end{equation*}
for which $F(X):[0,T]\times\Omega\rightarrow\R$, $(t,\omega)\mapsto F_{t}(X)(\omega)$ is a process for every $X\in\mathscr{D}$. If there is a filtration $(\mathscr{G}_{t})_{t\in [0,T]}$ of $\mathscr{F}$ to which $F(X)$ is adapted for all $X\in\mathscr{D}$, then we will refer to an $(\mathscr{G}_{t})_{t\in [0,T]}$-time-dependent random functional.

\begin{enumerate}[(1)]
\item {\bf The contractual derivative cash flows} with $\mathcal{C}$ are supposed to depend on a payoff functional and a dividend-paying risky asset that is influenced by its variance, or squared volatility, in an extended sense.
\item[-] The \emph{price of the risky asset}, its \emph{quasi variance} and its \emph{instantaneous dividend rate} are modelled by two $\mathbb{R}_{+}$-valued c\a dl\a g processes $S$ and $V$ in $\mathscr{S}$ and some process $\pi$ that is $(\mathscr{F}_{t})_{t\in [0,T]}$-progressively measurable and admits integrable paths, respectively.
\item[-] The $\mathbb{R}_{+}$-valued Borel measurable functional $\Phi$ defined on the closed set of all paths in $D([0,T])\times D([0,T])$ with non-negative entries represents the \emph{payoff functional}.
\item[-] The \emph{contractual cash flows} consist of the amount $\Phi(S,V)$ paid at maturity and dividends according to the rate $\pi$. The continuous process $\null_{\mathrm{con}}\mathrm{CF}$ representing the discounted future cash flows at any time point is given by
\begin{equation}\label{eq:cash flows and costs 1}
\null_{\mathrm{con}}\mathrm{CF}_{s} := D_{s,T}(r)\Phi(S,V)\mathbbm{1}_{\{\tau > T\}} + \int_{s}^{T\wedge\tau}D_{s,t}(r)\pi_{t}\,dt.
\end{equation}
\item {\bf The costs and benefits of a collateral account} arising from the collateralisation procedure to mitigate the default risk, subject to the collateral remuneration rate.
\item[-] Namely, the collateral serves as guarantee in case of default and the party receiving it will have to remunerate it at a certain interest rate, called the \emph{collateral rate}, determined by the contract. We assume that the assets received as collateral can be re-hypotecated and do not have to be kept segregated.
\item[-] For an $(\mathscr{F}_{t})_{t\in [0,T]}$-time-dependent random functional $C$ on $\mathscr{S}$ the process $C(\mathscr{V})$, required to be c\a gl\a d, models the \emph{cash flows of the collateral procedure}.
\item[-] The \emph{collateral rates of each party} are represented by two $(\mathscr{F}_{t})_{t\in [0,T]}$-progressively measurable processes $\null_{+}c$ and $\null_{-}c$ with integrable paths, respectively.
\item[-] So, $\mathcal{I}$ is a collateral receiver remunerating the assets at the rate $\null_{+}c_{t}$ on $\{C_{t}(\mathscr{V}) > 0\}$ and a collateral provider investing at the rate $\null_{-}c_{t}$ on $\{C_{t}(\mathscr{V}) < 0\}$ for all $t\in [0,T]$. 
\item[-] By means of the $(\mathscr{F}_{t})_{t\in [0,T]}$-time-dependent random functional $c$ on $\mathscr{S}$ given by
\begin{equation*}
c_{t}(X) := \null_{+}c_{t}\mathbbm{1}_{]0,\infty[}\big(C_{t}(X)\big) + \null_{-}c_{t}\mathbbm{1}_{]-\infty,0[}\big(C_{t}(X)\big),
\end{equation*}
the process $c(\mathscr{V})$ represents the \emph{respective collateral rate}.
\item[-] We define a time-dependent random functional $\null_{\mathrm{col}}\mathrm{C}$ on the set of all $X\in\mathscr{S}$ for which $C(X)$ is c\a gl\a d by
\begin{equation}\label{eq:cash flows and costs 2}
\null_{\mathrm{col}}\mathrm{C}_{s}(X) := \int_{s}^{T\wedge\tau}D_{s,t}(r)(c_{t}(X) - r_{t})C_{t}(X)\,dt.
\end{equation}
Then the continuous process $\null_{\mathrm{col}}\mathrm{C}(\mathscr{V})$ stands for the time evolution of the discounted future \emph{collateral costs and benefits}.

\item {\bf The costs and benefits of a funding account} that may accrue, as $\mathcal{I}$ is supposed to have access to an account for borrowing or investing money at two respective risk-free interest rates.
\item[-] Given an $(\tilde{\mathscr{F}}_{t})_{t\in [0,T]}$-time-dependent random functional $\tilde{F}$ on $\tilde{\mathscr{S}}$, the process $\tilde{F}(\tilde{\mathscr{V}})$, supposed to be c\a gl\a d, stands for the \emph{funding amount}.
\item[-] The \emph{interest rates for borrowing and lending} are given by the $(\tilde{\mathscr{F}}_{t})_{t\in [0,T]}$-progressively measurable processes $\null_{+}\tilde{f}$ and $\null_{-}\tilde{f}$ with integrable paths, respectively.
\item[-] Hence, $\mathcal{I}$ is borrowing the amount $\tilde{F}_{t}(\tilde{\mathscr{V}})$ at the interest rate $\null_{+}\tilde{f}_{t}$ on $\{\tilde{F}_{t}(\tilde{\mathscr{V}}) > 0\}$ and she is lending $-\tilde{F}_{t}(\tilde{\mathscr{V}})$ at the rate $\null_{-}\tilde{f}_{t}$ on $\{\tilde{F}_{t}(\tilde{\mathscr{V}}) < 0\}$ for all $t\in [0,T]$. 
\item[-] By using the $(\tilde{\mathscr{F}}_{t})_{t\in [0,T]}$-time-dependent random functional $\tilde{f}$ on $\tilde{\mathscr{S}}$ defined via
\begin{equation*}
\tilde{f}_{t}(\tilde{X}) := \null_{+}\tilde{f}_{t}\mathbbm{1}_{]0,\infty[}\big(\tilde{F}_{t}(\tilde{X})\big) + \null_{-}\tilde{f}_{t}\mathbbm{1}_{]-\infty,0[}\big(\tilde{F}_{t}(\tilde{X})\big),
\end{equation*}
the process $\tilde{f}(\tilde{\mathscr{V}})$ yields the \emph{respective funding rate}.
\item[-] We introduce a time-dependent random functional $\null_{\mathrm{fun}}\mathrm{C}$ on the set of all $\tilde{X}\in\tilde{\mathscr{S}}$ for which $\tilde{F}(\tilde{X})$ is c\a gl\a d by
\begin{equation}\label{eq:cash flows and costs 3}
\null_{\mathrm{fun}}\mathrm{C}_{s}(\tilde{X}) := \int_{s}^{T\wedge\tau}D_{s,t}(r)(\tilde{f}_{t}(\tilde{X}) - r_{t})\tilde{F}_{t}(\tilde{X})\,dt.
\end{equation}
Then the continuous process $\null_{\mathrm{fun}}\mathrm{C}(\tilde{\mathscr{V}})$ represents the temporal development of the present value of the \emph{funding costs and benefits}.

\item As $\mathcal{I}$ may be a bank $\mathcal{B}$, we assume that she may enter repurchase agreements to hedge its exposure. For this reason, {\bf the repo costs and the benefits that result from hedging the derivative} should be taken into account.
\item[-] For an $(\tilde{\mathscr{F}}_{t})_{t\in [0,T]}$-time-dependent random functional $\tilde{H}$ on $\tilde{\mathscr{S}}$ the process $\tilde{H}(\tilde{\mathscr{V}})$, assumed to be c\a gl\a d, measures the \emph{value of the risky asset position that $\mathcal{I}$ has via the repo}.
\item[-] The two \emph{repo rates} are given by $(\tilde{\mathscr{F}}_{t})_{t\in [0,T]}$-progressively measurable processes $\null_{+}\tilde{h}$ and $\null_{-}\tilde{h}$ with integrable paths.
\item[-] Thus, $\mathcal{I}$ borrows a risky asset with the repo rate $\null_{+}\tilde{h}_{t}$ on $\{\tilde{H}_{t}(\tilde{\mathscr{V}}) > 0\}$ and lends a risky asset with the rate $\null_{-}\tilde{h}_{t}$ on $\{\tilde{H}_{t}(\tilde{\mathscr{V}}) < 0\}$ for each $t\in [0,T]$.
\item[-] We let the $(\tilde{\mathscr{F}}_{t})_{t\in [0,T]}$-time-dependent random functional $\tilde{h}$ on $\tilde{\mathscr{S}}$ be given by
\begin{equation*}
\tilde{h}_{t}(\tilde{X}) := \null_{+}\tilde{h}_{t}\mathbbm{1}_{]0,\infty[}\big(\tilde{H}_{t}(\tilde{X})\big) + \null_{-}\tilde{h}_{t}\mathbbm{1}_{]-\infty,0[}\big(\tilde{H}_{t}(\tilde{X})\big),
\end{equation*}
which yields $\tilde{h}(\tilde{\mathscr{V}})$ as \emph{respective repo rate}.
\item[-] Thereby, we suppose that $\mathcal{I}$ continuously rolls over repo contracts and that at each point $t\in [0,T]$ \emph{she receives in the repo the exact value of the assets she is lending}. Thus, the gain of the repo position is given by the growth of the assets that are being repoed minus $\tilde{h}_{t}(\tilde{\mathscr{V}})(-\tilde{H}_{t}(\tilde{\mathscr{V}}))$, the repo rate times the amount of cash received.
\item[-] In this context, let the time-dependent random functional $\null_{\mathrm{hed}}\mathrm{C}$ on the set of all $\tilde{X}\in\tilde{\mathscr{S}}$ for which $\tilde{H}(\tilde{X})$ is c\a gl\a d be defined via
\begin{equation}\label{eq:cash flows and costs 4}
\null_{\mathrm{hed}}\mathrm{C}_{s}(\tilde{X}) := \int_{s}^{T\wedge\tau}D_{s,t}(r)\big(r_{t} - \tilde{h}_{t}(\tilde{X})\big)\tilde{H}_{t}(\tilde{X})\,dt.
\end{equation}
Then the continuous process $\null_{\mathrm{hed}}\mathrm{C} (\tilde{\mathscr{V}})$ stands for the present value development of the \emph{hedging costs and benefits}.
\item {\bf The cash flows arising on the default of one of the two parties} that can be computed with the \emph{residual value of the claim}, the \emph{net exposure}, the \emph{losses given default} and the \emph{funding amount}.

\item[-] For an $(\mathscr{F}_{t})_{t\in [0,T]}$-time-dependent random functional $\varepsilon$ on $\mathscr{S}$ the process $\varepsilon(\mathscr{V})$, which is ought to be c\a gl\a d, models the time evolution of the \emph{close-out value}. 
\item[-] We interpret $\varepsilon_{\tau}(\mathscr{V})$ as the \emph{residual value of the claim} at the time $\tau$ of a party to default first on $\{\tau < \infty\}$, since $\tau_{\mathcal{I}}\neq \tau_{\mathcal{C}}$ a.s.~on this event.
\item[-] On $\{\tau_{\mathcal{I}} > \tau_{\mathcal{C}}\}$ we specify that if the \emph{net exposure} $(\varepsilon_{\tau} - C_{\tau})(\mathscr{V})$ at the time of default is non-positive, then $\mathcal{I}$ is a net debtor and repays $\varepsilon_{\tau}(\mathscr{V})$ to $\mathcal{C}$.
\item[-] If instead $(\varepsilon_{\tau} - C_{\tau})(\mathscr{V}) > 0$, then $\mathcal{I}$ is a net creditor and recovers a fraction $1-\mathrm{LGD}_{\mathcal{I}}$ of its credits, in which case it receives $C_{\tau}(\mathscr{V}) + (1-\mathrm{LGD}_{\mathcal{C}})(\varepsilon_{\tau} - C_{\tau})(\mathscr{V})$.
\item[-] We implicitly assume that the loss fractions $\mathrm{LGD}_{\mathcal{I}},\mathrm{LGD}_{\mathcal{C}}\in [0,1]$, which denote the \emph{losses given defaults} of $\mathcal{I}$ and $\mathcal{C}$, respectively, are deterministic exogenous quantities.
\item[-] The case in which $\mathcal{I}$ is a bank and defaults before $\mathcal{C}$ is symmetrical. If, however, $\mathcal{I}\neq\mathcal{B}$, then merely $\varepsilon_{\tau}(\mathscr{V})$ is being considered on $\{\tau_{\mathcal{I}} < \tau_{\mathcal{C}}\}$. 
\item[-] We define a time-dependent random functional $\null_{\mathrm{def,c}}\mathrm{CF}$ on the set of all $X\in\mathscr{S}$ for which $\varepsilon(X)$ and $C(X)$ are c\a gl\a d via
\begin{equation*}
\begin{split}
\null_{\mathrm{def,c}}\mathrm{CF}_{s}(X) &:= D_{s,\tau}(r)\big(\varepsilon_{\tau}(X) - \mathrm{LGD}_{\mathcal{C}}(\varepsilon_{\tau} - C_{\tau})^{+}(X)\mathbbm{1}_{\{\tau_{\mathcal{I}} > \tau_{\mathcal{C}}\}}\big)\\
&\quad + D_{s,\tau}(r)\mathrm{LGD}_{\mathcal{I}}(\varepsilon_{\tau} - C_{\tau})^{-}(X)\mathbbm{1}_{\{\mathcal{I}=\mathcal{B},\,\tau_{\mathcal{I}} < \tau_{\mathcal{C}}\}}
\end{split}
\end{equation*}
on $\{s < \tau < T\}$ and $\null_{\mathrm{def,c}}\mathrm{CF}_{s}(X):=0$ on the complement of this set. Then, according to our reasoning, the \emph{discounted future cash flows on default due to the contract} can be modelled by the c\a dl\a g process $\null_{\mathrm{def,c}}\mathrm{CF}(\mathscr{V})$.
\item[-] As we suppose that if $\mathcal{I}$ is a bank and has a cash surplus, then it may invest into risk-free assets, we also consider the \emph{cash flows on the bank's default due to funding}. 
\item[-] To this end, let the time-dependent random functional $\null_{\mathrm{def,f}}\mathrm{CF}$ on the set of all $\tilde{X}\in\tilde{\mathscr{S}}$ for which $\tilde{F}(\tilde{X})$ is c\a gl\a d be given by
\begin{equation*}
\null_{\mathrm{def,f}}\mathrm{CF}_{s}(\tilde{X}) := D_{s,\tau}(r)\mathrm{LGD}_{\mathcal{I}}\tilde{F}_{\tau}^{+}(\tilde{X})\mathbbm{1}_{\{\mathcal{I}=\mathcal{B},\,\tau_{\mathcal{I}} < \tau_{\mathcal{C}}\}}
\end{equation*}
on $\{s < \tau < T\}$ and $\null_{\mathrm{def,f}}\mathrm{CF}_{s}(\tilde{X}):=0$ on its complement. Then the c\a dl\a g process $\null_{\mathrm{def,f}}\mathrm{CF}(\tilde{\mathscr{V}})$ yields the time evolution of the corresponding net present value.
\item[-] Finally, for every $(X,\tilde{X})\in\mathscr{S}\times\tilde{\mathscr{S}}$ for which $C(X)$, $\varepsilon(X)$ and $\tilde{F}(\tilde{X})$ are c\a gl\a d we set
\begin{equation}\label{eq:cash flows and costs 5}
\null_{\mathrm{def}}\mathrm{CF}_{s}(X,\tilde{X}) := \null_{\mathrm{def,c}}\mathrm{CF}_{s}(X) + \null_{\mathrm{def,f}}\mathrm{CF}_{s}(\tilde{X}).
\end{equation}
Then the process $\null_{\mathrm{def}}\mathrm{CF}(\mathscr{V},\tilde{\mathscr{V}})$ sums up both sources of default risk.
\end{enumerate}

\subsection{The pre-default valuation equation}\label{se:3.3}

For the valuation of the derivative contract let us first ensure the integrability of the net present values of all the cash flows, costs and benefits generally given by~\eqref{eq:cash flows and costs 1}-\eqref{eq:cash flows and costs 5}. Throughout this section, $(\Omega,\mathscr{F},\tilde{P})$ serves as underlying probability space.

Let $\tilde{\mathscr{L}}(r,\tau)$ be the linear space of all random variables $X$ for which $D_{s,T}(r)|X|\mathbbm{1}_{\{\tau > T\}}$ is $\tilde{P}$-integrable for any $s\in [0,T]$ and $\tilde{\mathscr{S}}(r,\tau)$ be the linear space of all  measurable processes $X$ satisfying
\begin{equation*}
\tilde{E}\bigg[\int_{s}^{T\wedge\tau}D_{s,t}(r)|X_{t}|\,dt\bigg] < \infty \quad\text{for all $s\in [0,T[$.}
\end{equation*}
Furthermore, by $\tilde{\mathscr{D}}(r,\tau)$ we denote the linear space of all c\a gl\a d processes $X$ such that $\sup_{t\in ]s,T[}D_{s,t}(r)|X_{t}|\mathbbm{1}_{\{s < \tau < T\}}$ is $\tilde{P}$-integrable for each $s\in [0,T[$ and we set
\begin{equation*}
\tilde{\mathscr{L}}(r):=\tilde{\mathscr{L}}(r,\infty)\quad\text{and}\quad\tilde{\mathscr{S}}(r):=\tilde{\mathscr{S}}(r,\infty).
\end{equation*}

\begin{Remark}\label{re:integrability of pre-default components}
For any two $\R_{+}$-valued random variables $X$ and $\tilde{X}$ that are measurable with respect to $\mathscr{F}_{T}$ and $\tilde{\mathscr{F}_{T}}$ such that $X = \tilde{X}$ a.s.~on $\{\tau > T\}$ Corollary~\ref{co:identification} implies
\begin{equation*}
\tilde{E}\big[D_{s,T}(r)\tilde{X}\mathbbm{1}_{\{\tau > T\}}\big] = \tilde{E}\big[D_{s,T}(r)XG_{T}(\tau)\big]\quad\text{for any $s\in [0,T]$}.
\end{equation*}
Thus, $\tilde{X}\in\tilde{\mathscr{L}}(r,\tau)$ $\Leftrightarrow$ $XG_{T}(\tau)\in\tilde{\mathscr{L}}(r)$. Further, if $G(\tau)$ is measurable and now $X$ and $\tilde{X}$ denote two $\R_{+}$-valued processes that are progressively measurable relative to $(\mathscr{F}_{t})_{t\in [0,T]}$ and $(\tilde{\mathscr{F}}_{t})_{t\in [0,T]}$, respectively, then
\begin{equation*}
\tilde{E}\bigg[\int_{s}^{T\wedge\tau}D_{s,t}(r)\tilde{X}_{t}\,dt\bigg] = \tilde{E}\bigg[\int_{s}^{T}D_{s,t}(r)X_{t}G_{t}(\tau)\,dt\bigg] 
\end{equation*}
as soon as $X_{t} = \tilde{X}_{t}$ a.s.~on $\{\tau > t\}$ for all $t\in [0,T]$, according to Lemma~\ref{le:stopped integral}. This in turn shows that $\tilde{X}\in\tilde{\mathscr{S}}(r,\tau)$ $\Leftrightarrow$ $XG(\tau)\in\tilde{\mathscr{S}}(r)$.
\end{Remark}

Based on our definitions, the process $\null_{\mathrm{con}}\mathrm{CF}$ given by~\eqref{eq:cash flows and costs 1} that  models the discounted contractual derivative cash flows is integrable if our first model assumption holds:
\begin{enumerate}[label=(M.\arabic*), ref=M.\arabic*, leftmargin=\widthof{(M.1)} + \labelsep]
\item\label{con:m.1} The amount $\Phi(S,V)$ paid at maturity and the dividend rate $\pi$ lie in $\tilde{\mathscr{L}}(r,\tau)$ and $\tilde{\mathscr{S}}(r,\tau)$, respectively.
\end{enumerate}

By Remark~\ref{re:integrability of pre-default components}, if $G(\tau)$ were measurable, then, equivalently, we could have asked for $\Phi(S,V)G_{T}(\tau)\in\tilde{\mathscr{L}}(r)$ and $\pi G(\tau)\in\tilde{\mathscr{S}}(r)$. To consider the remaining quantities, let $\tilde{\mathscr{V}}\in\tilde{\mathscr{S}}$ be integrable up to time $\tau$ and $\mathscr{V}$ be a pre-default version of it.

We suppose that the collateral process, the funding amount, the hedging process and the close-out value possess c\a gl\a d paths or in short,
\begin{equation}\label{eq:con.1}
\text{$C(\mathscr{V})$, $\tilde{F}(\tilde{\mathscr{V}})$, $\tilde{H}(\tilde{\mathscr{V}})$ and $\varepsilon(\mathscr{V})$ are c\a gl\a d.}
\end{equation}
Then the processes $\null_{\mathrm{col}}\mathrm{C}(\mathscr{V})$, $\null_{\mathrm{fun}}\mathrm{C}(\tilde{\mathscr{V}})$ and $\null_{\mathrm{hed}}\mathrm{C}(\tilde{\mathscr{V}})$, introduced in~\eqref{eq:cash flows and costs 2}-\eqref{eq:cash flows and costs 4}, that model the collateral, funding and hedging costs and benefits, respectively, are integrable if
\begin{equation}\label{eq:con.2}
\text{$(c(\mathscr{V}) - r)C(\mathscr{V})$, $(\tilde{f}(\tilde{\mathscr{V}})-r)\tilde{F}(\tilde{\mathscr{V}})$, $(r-\tilde{h}(\tilde{\mathscr{V}}))\tilde{H}(\tilde{\mathscr{V}})$ lie in $\tilde{\mathscr{S}}(r,\tau)$.}
\end{equation}

For the integrability of $\null_{\mathrm{def}}\mathrm{CF}(\mathscr{V},\tilde{\mathscr{V}})$, given by~\eqref{eq:cash flows and costs 5} and modelling the cash flows on the default of one of the two parties, it suffices that the respective cash flows appearing on the default of $\mathcal{I}$ and $\mathcal{C}$ are elements of $\tilde{\mathscr{D}}(r,\tau)$. Namely,
\begin{equation}\label{eq:con.3}
\begin{split}
&\text{$(\varepsilon(\mathscr{V}) + \mathrm{LGD}_{\mathcal{I}}((\varepsilon - C)^{-}(\mathscr{V}) + \tilde{F}^{+}(\tilde{\mathscr{V}}))\mathbbm{1}_{\{\mathcal{I}=\mathcal{B}\}})\mathbbm{1}_{\{\tau_{\mathcal{I}} < \tau_{\mathcal{C}}\}}$}\\
&\text{and $(\varepsilon -\mathrm{LGD}_{\mathcal{C}}(\varepsilon - C)^{+})(\mathscr{V})\mathbbm{1}_{\{\tau_{\mathcal{I}} > \tau_{\mathcal{C}}\}}$ belong to $\tilde{\mathscr{D}}(r,\tau)$}.
\end{split}
\end{equation}
 In fact, as condition~\eqref{eq:market condition 2} states that $\tau_{\mathcal{I}}\neq \tau_{\mathcal{C}}$ a.s.~on $\{\tau < \infty\}$, we readily check that the random variable $|\null_{\mathrm{def}}\mathrm{CF}_{s}(\mathscr{V},\tilde{\mathscr{V}})|$ is bounded by
\begin{align*}
&\sup_{t\in ]s,T[}D_{s,t}(r)|\varepsilon_{t}(\mathscr{V}) + \mathrm{LGD}_{\mathcal{I}}((\varepsilon_{t} - C_{t})^{-}(\mathscr{V}) + \tilde{F}_{t}^{+}(\tilde{\mathscr{V}}))\mathbbm{1}_{\{\mathcal{I}=\mathcal{B}\}}|\mathbbm{1}_{\{\tau_{\mathcal{I}} < \tau_{\mathcal{C}}\}}\\ 
&\quad + \sup_{t\in ]s,T[} D_{s,t}(r)|\varepsilon_{t} -\mathrm{LGD}_{\mathcal{C}}(\varepsilon_{t} - C_{t})^{+}|(\mathscr{V})\mathbbm{1}_{\{\tau_{\mathcal{I}} > \tau_{\mathcal{C}}\}}
\end{align*}
a.s.~on $\{s < \tau < T\}$ for all $s\in [0,T[$, which entails the asserted integrability. Let us now suppose that~\eqref{con:m.1} holds. For $\tilde{\mathscr{V}}$ to be the value process of a hedging strategy of the contract under $\tilde{P}$, we require that~\eqref{eq:con.1}-\eqref{eq:con.3} be satisfied.

Further, we stipulate that $\tilde{\mathscr{V}}_{s}$ agrees with the conditional expectation of the sum of the net present values of all cash flows, costs and benefits relative to the current available market information under $\tilde{P}$ for each $s\in [0,T]$. Namely,
\begin{equation}\label{eq:valuation}
\tilde{\mathscr{V}}_{s} = \tilde{E}\big[\null_{\mathrm{con}}\mathrm{CF}_{s} - \null_{\mathrm{col}}\mathrm{C}_{s}(\mathscr{V}) - \null_{\mathrm{fun}}\mathrm{C}_{s}(\tilde{\mathscr{V}}) - \null_{\mathrm{hed}}\mathrm{C}_{s}(\tilde{\mathscr{V}}) + \null_{\mathrm{def}}\mathrm{CF}_{s}(\mathscr{V},\tilde{\mathscr{V}})\big|\tilde{\mathscr{F}}_{s}\big]\quad\text{a.s.}
\end{equation}
for any $s\in [0,T]$ and from our considerations we infer that $\tilde{\mathscr{V}}$ is necessarily $\tilde{P}$-integrable. In particular, the terminal value condition $\tilde{\mathscr{V}}_{T} = \null_{\mathrm{con}}\mathrm{CF}_{T} = \Phi(S,V)$ a.s.~must hold.

This implicit conditional representation refines the \emph{valuation equation}~(1) in~\cite{BriFraPal19}, which is build on the valuation problems in~\cite{PalPerBri11} and~\cite{PalPerBri12}. As immediate consequence of~\eqref{eq:pre-default version}, the pre-default version $\mathscr{V}$ of the value process $\tilde{\mathscr{V}}$ satisfies
\begin{equation}\label{eq:preliminary pre-default valuation}
\mathscr{V}_{s}G_{s}(\tau) = \tilde{E}\big[\null_{\mathrm{con}}\mathrm{CF}_{s} - \null_{\mathrm{col}}\mathrm{C}_{s}(\mathscr{V}) - \null_{\mathrm{fun}}\mathrm{C}_{s}(\tilde{\mathscr{V}}) - \null_{\mathrm{hed}}\mathrm{C}_{s}(\tilde{\mathscr{V}}) + \null_{\mathrm{def}}\mathrm{CF}_{s}(\mathscr{V},\tilde{\mathscr{V}})\big|\mathscr{F}_{s}\big]\quad\text{a.s.}
\end{equation}
for all $s\in [0,T]$, as the quantities $\null_{\mathrm{con}}\mathrm{CF}_{s} $, $\null_{\mathrm{col}}\mathrm{C}_{s}(\mathscr{V})$, $\null_{\mathrm{fun}}\mathrm{C}_{s}(\tilde{\mathscr{V}})$, $\null_{\mathrm{hed}}\mathrm{C}_{s}(\tilde{\mathscr{V}})$ and $\null_{\mathrm{def}}\mathrm{CF}_{s}(\mathscr{V},\tilde{\mathscr{V}})$ vanish on $\{\tau \leq s\}$. Thus, to derive a valuation equation for $\mathscr{V}$ restricted to default-free information, we will replace the $\mathscr{F}_{T}\vee\mathscr{H}_{T}^{\tau_{\mathcal{I}}}\vee\mathscr{H}_{T}^{\tau_{\mathcal{C}}}$-measurable random variables
\begin{equation*}
\null_{\mathrm{con}}\mathrm{CF}_{s},\quad \null_{\mathrm{col}}\mathrm{C}_{s}(\mathscr{V}),\quad\null_{\mathrm{fun}}\mathrm{C}_{s}(\tilde{\mathscr{V}}),\quad \null_{\mathrm{hed}}\mathrm{C}_{s}(\tilde{\mathscr{V}})\quad\text{and}\quad \null_{\mathrm{def}}\mathrm{CF}_{s}(\mathscr{V},\tilde{\mathscr{V}})
\end{equation*}
within the conditional expectation in~\eqref{eq:preliminary pre-default valuation} by $\mathscr{F}_{T}$-measurable ones that may merely depend on $\mathscr{V}$. For this purpose, we will use the probabilistic results from Section~\ref{se:2.2} and require a set of model assumptions:
\begin{enumerate}[label=(M.\arabic*), ref=M.\arabic*, leftmargin=\widthof{(M.4)} + \labelsep]
\setcounter{enumi}{1}
\item\label{con:m.2} $G(\tau_{\mathcal{I}})$ and $ G(\tau_{\mathcal{C}})$ are continuous and of finite variation, $\tau_{\mathcal{I}}$ and $\tau_{\mathcal{C}}$ are conditionally independent relative to $(\mathscr{F}_{t})_{t\in [0,T]}$ under $\tilde{P}$ and $G(\tau) = G(\tau_{\mathcal{I}})G(\tau_{\mathcal{C}})$.
\item\label{con:m.3} There are two $(\mathscr{F}_{t})_{t\in [0,T]}$-time-dependent random functionals $F$ and $H$ on $\mathscr{S}$ such that $F(X)$ and $H(X)$ serve as pre-default versions of $\tilde{F}(\tilde{X})$ and $\tilde{H}(\tilde{X})$, respectively, for any $\tilde{X}\in\tilde{\mathscr{S}}$ that is integrable up to time $\tau$ with pre-default version $X$.
\item\label{con:m.4} The funding rates $\null_{+}\tilde{f}$, $\null_{-}\tilde{f}$ and the hedging rates $\null_{+}\tilde{h}$, $\null_{-}\tilde{h}$ are integrable up to time $\tau$ and admit $(\mathscr{F}_{t})_{t\in [0,T]}$-progressively measurable pre-default versions $\null_{+}f$, $\null_{-}f$ and $\null_{+}h$, $\null_{-}h$, respectively, with integrable paths.
\end{enumerate}

\begin{Remark}\label{re:identity}
The identity in~\eqref{con:m.2} simply states that $G(\tau)$ and $G(\tau_{\mathcal{I}})G(\tau_{\mathcal{C}})$ are not only modifications of each other, but in fact equal. This ensures that all the paths of $G(\tau)$ are continuous and of finite variation.
\end{Remark}

Under~\eqref{con:m.3} and~\eqref{con:m.4}, we may define two time-dependent random functionals $f$ and $h$ on $\mathscr{S}$ relative to $(\mathscr{F}_{t})_{t\in [0,T]}$ by $f_{t}(X) := \null_{+}f_{t}\mathbbm{1}_{]0,\infty[}(F_{t}(X)) + \null_{-}f_{t}\mathbbm{1}_{]-\infty,0[}(F_{t}(X))$ and
\begin{equation*}
h_{t}(X) := \null_{+}h_{t}\mathbbm{1}_{]0,\infty[}\big(H_{t}(X)\big) + \null_{-}h_{t}\mathbbm{1}_{]-\infty,0[}\big(H_{t}(X)\big).
\end{equation*}
Then $\tilde{f}(\tilde{X})$ and $\tilde{h}(\tilde{X})$ admit $f(X)$ and $h(X)$ as pre-default versions, respectively, for any $\tilde{X}\in\tilde{\mathscr{S}}$ that is integrable up to time $\tau$ with pre-default version $X$. Further, $f(X)$ and $h(X)$ are $(\mathscr{F}_{t})_{t\in [0,T]}$-progressively measurable if $F(X)$ and $H(X)$ are.

Now let the pre-default funding amount $F(\mathscr{V})$ and the pre-default hedging process $H(\mathscr{V})$ be c\a gl\a d. In this case,~\eqref{eq:con.1} entails that the following path regularity condition for the pre-default version $\mathscr{V}$ of $\tilde{\mathscr{V}}$ holds:
\begin{enumerate}[label=(C.\arabic*), ref=C.\arabic*, leftmargin=\widthof{(C.1)} + \labelsep]
\item\label{con:c.1} $C(\mathscr{V})$, $F(\mathscr{V})$, $H(\mathscr{V})$ and $ \varepsilon(\mathscr{V})$ are c\a gl\a d.
\end{enumerate}

Let in addition~\eqref{con:m.2} be satisfied. Then Remark~\ref{re:integrability of pre-default components} shows that~\eqref{eq:con.2} is valid if and only if the following integrability condition is valid:
\begin{enumerate}[label=(C.\arabic*), ref=C.\arabic*, leftmargin=\widthof{(C.1)} + \labelsep]
\setcounter{enumi}{1}
\item\label{con:c.2} The product of $G(\tau)$ with any of the processes $(c(\mathscr{V})-r)C(\mathscr{V})$, $(f(\mathscr{V})-r)F(\mathscr{V})$ and $(r-h(\mathscr{V}))H(\mathscr{V})$ belongs to $\tilde{\mathscr{S}}(r)$.
\end{enumerate}

To handle the conditional expectation of $\null_{\mathrm{def}}\mathrm{CF}_{s}(\mathscr{V},\tilde{\mathscr{V}})$ relative to $\mathscr{F}_{s}$ in~\eqref{eq:preliminary pre-default valuation} for any $s\in [0,T]$, we need another integrability condition that involves the variation process $V(\tau_{i})$ of $G(\tau_{i})$ for both $i\in\{\mathcal{I},\mathcal{C}\}$. In this regard, we recall that $V(\tau_{i}) = 1- G(\tau_{i})\in [0,1]$ if $G(\tau_{i})$ is decreasing, as in Example~\ref{ex:default time specification} below.
\begin{enumerate}[label=(C.\arabic*), ref=C.\arabic*,leftmargin=\widthof{(C.2)} + \labelsep]
\setcounter{enumi}{2}
\item\label{con:c.3} $\sup_{t\in ]s,T[} D_{s,t}(r)|\varepsilon_{t} + \mathrm{LGD}_{\mathcal{I}}((\varepsilon_{t} - C_{t})^{-} + F_{t}^{+})\mathbbm{1}_{\{\mathcal{I}=\mathcal{B}\}}|(\mathscr{V})G_{t}(\tau_{\mathcal{C}})(V_{T}(\tau_{\mathcal{I}})-V_{s}(\tau_{\mathcal{I}}))$ and
\begin{equation*}
\sup_{t\in ]s,T[} D_{s,t}(r)|\varepsilon_{t} -\mathrm{LGD}_{\mathcal{C}}(\varepsilon_{t} - C_{t})^{+}|(\mathscr{V})G_{t}(\tau_{\mathcal{I}})(V_{T}(\tau_{\mathcal{C}})-V_{s}(\tau_{\mathcal{C}}))
\end{equation*}
are $\tilde{P}$-integrable for any $s\in [0,T[$.
\end{enumerate}

For a clear and concise overview, let us summarise all appearing quantities by defining three $(\mathscr{F}_{t})_{t\in [0,T]}$-time-dependent random functionals $\null_{0}\B$, $\null_{\mathcal{I}}\B$ and $\null_{\mathcal{C}}\B$ on $\mathscr{S}$ via
\begin{equation}\label{eq:inhomogeneity process components}
\begin{split}
\null_{0}\B_{t}(X) &:= \pi_{t} - (c_{t}(X) - r_{t})C_{t}(X) - (f_{t}(X) - r_{t})F_{t}(X) - (r_{t} - h_{t}(X))H_{t}(X),\\
\null_{\mathcal{I}}\B_{t}(X) &:= \varepsilon_{t}(X) + \mathrm{LGD}_{\mathcal{I}}((\varepsilon_{t} - C_{t})^{-} + F_{t}^{+})(X)\mathbbm{1}_{\{\mathcal{I}=\mathcal{B}\}}\quad\text{and}\\
\null_{\mathcal{C}}\B_{t}(X) &:= \varepsilon_{t}(X) -\mathrm{LGD}_{\mathcal{C}}(\varepsilon_{t} - C_{t})^{+}(X).
\end{split}
\end{equation}
Then $\null_{0}\B(X)$ is $(\mathscr{F}_{t})_{t\in [0,T]}$-progressively measurable for any $X\in\mathscr{S}$ as soon as $C(X)$, $F(X)$ and $H(X)$ are. Thus,~\eqref{con:c.2} implies $\null_{0}\B(\mathscr{V})G(\tau)\in\tilde{\mathscr{S}}(r)$. Further, $\null_{\mathcal{I}}\B(X)$ and $\null_{\mathcal{C}}\B(X)$ are c\a gl\a d if $C(X)$, $F(X)$ and $\varepsilon(X)$ are. In this case, the two Riemann-Stieltjes integrals
\begin{equation*}
\int_{s}^{T}D_{s,t}(r)|\null_{\mathcal{I}}\B_{t}(X)|G_{t}(\tau_{\mathcal{C}})\,dV_{t}(\tau_{\mathcal{I}})\quad\text{and}\quad \int_{s}^{T}D_{s,t}(r)|\null_{\mathcal{C}}\B_{t}(X)|G_{t}(\tau_{\mathcal{I}})\,dV_{t}(\tau_{\mathcal{C}})
\end{equation*}
are finite for each $s\in [0,T[$, and for $X=\mathscr{V}$ each of these integrals is bounded by the respective random variable in~\eqref{con:c.3}, which ensures their $\tilde{P}$-integrability.

Consequently, we may introduce an $(\mathscr{F}_{t})_{t\in [0,T]}$-time-dependent random functional $A$ on the set of all $X\in\mathscr{S}$ for which $C(X)$, $F(X)$, $H(X)$ and $\varepsilon(X)$ are c\a gl\a d by
\begin{equation}\label{eq:finite variation process}
\begin{split}
A_{t}(X) &:= \int_{0}^{t}\null_{0}\B_{s}(X)G_{s}(\tau)\,ds - \int_{0}^{t}\null_{\mathcal{I}}\B_{s}(X)G_{s}(\tau_{\mathcal{C}})\,dG_{s}(\tau_{\mathcal{I}})\\
&\quad - \int_{0}^{t}\null_{\mathcal{C}}\B_{s}(X)G_{s}(\tau_{\mathcal{I}})\,dG_{s}(\tau_{\mathcal{C}}).
\end{split}
\end{equation}
We readily see that $A(X)$ is a continuous process of finite variation for each process $X$ in its domain and the Riemann-Stieltjes integral $\int_{s}^{T}D_{s,t}(r)\,dA_{t}(\mathscr{V})$ is $\tilde{P}$-integrable for every $s\in [0,T]$ if~\eqref{con:c.2} and~\eqref{con:c.3} are valid.

Based on all these measurability, path regularity and integrability considerations, we may now rewrite the conditional expectations appearing in~\eqref{eq:preliminary pre-default valuation} as follows.

\begin{Proposition}\label{pr:pre-default valuation}
Let~\eqref{con:m.1}-\eqref{con:m.4} hold and $\tilde{\mathscr{V}}\in\tilde{\mathscr{S}}$ be integrable with pre-default version $\mathscr{V}$ such that $\tilde{F}(\tilde{\mathscr{V}})$, $\tilde{H}(\tilde{\mathscr{V}})$ are c\a gl\a d and~\eqref{eq:con.3} and~\eqref{con:c.1}-\eqref{con:c.3} hold. Then
\begin{equation}\label{eq:conditional expectation representation}
\begin{split}
\tilde{E}\big[\null_{\mathrm{con}}\mathrm{CF}_{s} - \null_{\mathrm{col}}\mathrm{C}_{s}(\mathscr{V}) &- \null_{\mathrm{fun}}\mathrm{C}_{s}(\tilde{\mathscr{V}})  - \null_{\mathrm{hed}}\mathrm{C}_{s}(\tilde{\mathscr{V}}) + \null_{\mathrm{def}}\mathrm{CF}_{s}(\mathscr{V},\tilde{\mathscr{V}})\big|\mathscr{F}_{s}\big]\\ &= \tilde{E}\bigg[D_{s,T}(r)\Phi(S,V)G_{T}(\tau) + \int_{s}^{T}D_{s,t}(r)\,dA_{t}(\mathscr{V})\,\bigg|\,\mathscr{F}_{s}\bigg]
\end{split}
\end{equation}
a.s.~for each $s\in [0,T]$.
\end{Proposition}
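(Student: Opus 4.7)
The plan is to split the conditional expectation on the left-hand side of~\eqref{eq:conditional expectation representation} into its five summands, rewrite each of them as a conditional expectation of an $\mathscr{F}_{T}$-measurable quantity built only from $\mathscr{V}$, and then reassemble the resulting pieces to recognise the Riemann--Stieltjes differential $dA_{t}(\mathscr{V})$ produced by~\eqref{eq:inhomogeneity process components}--\eqref{eq:finite variation process}. The two technical engines are Lemma~\ref{le:stopped integral}, used for all absolutely continuous time integrals, and Proposition~\ref{pr:conditional integral representation}, used for the lump-sum default cash flows.

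\paragraph{The terminal payoff and the four integral terms.} For the maturity piece of $\null_{\mathrm{con}}\mathrm{CF}_{s}$, the random variable $D_{s,T}(r)\Phi(S,V)$ is $\mathscr{F}_{T}$-measurable, so conditioning first on $\mathscr{F}_{T}$ and using~\eqref{eq:survival process} yields the term $\tilde{E}[D_{s,T}(r)\Phi(S,V)G_{T}(\tau)\,|\,\mathscr{F}_{s}]$ on the right. For the dividend integral in $\null_{\mathrm{con}}\mathrm{CF}_{s}$ and for the three cost integrals $\null_{\mathrm{col}}\mathrm{C}_{s}(\mathscr{V})$, $\null_{\mathrm{fun}}\mathrm{C}_{s}(\tilde{\mathscr{V}})$, $\null_{\mathrm{hed}}\mathrm{C}_{s}(\tilde{\mathscr{V}})$, I apply Lemma~\ref{le:stopped integral} (whose measurability hypothesis on $G(\tau)$ is provided by~\eqref{con:m.2}) with integrand $\tilde{X}_{t}$ equal to the relevant $D_{s,t}(r)$-discounted factor and $X_{t}$ its pre-default version. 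The pre-default versions of $\tilde{F}(\tilde{\mathscr{V}})$ and $\tilde{H}(\tilde{\mathscr{V}})$ are supplied by~\eqref{con:m.3}, while those of the funding and hedging rates, and hence of the functional rates $\tilde{f}(\tilde{\mathscr{V}})$ and $\tilde{h}(\tilde{\mathscr{V}})$, come from~\eqref{con:m.4} together with the explicit construction of $f$ and $h$ given after Remark~\ref{re:identity}. Summing the four outputs gives precisely $\tilde{E}[\int_{s}^{T} D_{s,t}(r)\null_{0}\B_{t}(\mathscr{V})G_{t}(\tau)\,dt\,|\,\mathscr{F}_{s}]$, which is the absolutely continuous part of $\int_{s}^{T}D_{s,t}(r)\,dA_{t}(\mathscr{V})$.

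\paragraph{The default cash flows (main obstacle).} On $\{s < \tau < T\}$ the sum $\null_{\mathrm{def}}\mathrm{CF}_{s}(\mathscr{V},\tilde{\mathscr{V}})$ decomposes, using~\eqref{eq:market condition 2}, into the two mutually exclusive contributions
\begin{equation*}
D_{s,\tau}(r)\big[\varepsilon_{\tau}(\mathscr{V}) + \mathrm{LGD}_{\mathcal{I}}\big((\varepsilon_{\tau}-C_{\tau})^{-}(\mathscr{V}) + \tilde{F}_{\tau}^{+}(\tilde{\mathscr{V}})\big)\mathbbm{1}_{\{\mathcal{I}=\mathcal{B}\}}\big]\mathbbm{1}_{\{s < \tau_{\mathcal{I}} < T\wedge\tau_{\mathcal{C}}\}}
\end{equation*}
and the analogous $\null_{\mathcal{C}}\B_{\tau}(\mathscr{V})$-term on $\{s < \tau_{\mathcal{C}} < T\wedge\tau_{\mathcal{I}}\}$. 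To each of them I apply Proposition~\ref{pr:conditional integral representation}: first with $(\sigma,\tau):=(\tau_{\mathcal{I}},\tau_{\mathcal{C}})$ and $\tilde{X}_{t}$ the c\a gl\a d process inside the first bracket (multiplied by $D_{s,t}(r)$), then with the two roles interchanged. Hypothesis (i) is exactly~\eqref{con:m.2}; hypothesis (ii) is met by taking $X$ to be the pre-default version built from $F(\mathscr{V})$, $C(\mathscr{V})$ and $\varepsilon(\mathscr{V})$, which is $(\mathscr{F}_{t})_{t\in [0,T]}$-progressively measurable thanks to~\eqref{con:m.3} and~\eqref{con:c.1}; hypothesis (iii) is trivial because the continuity of $G(\tau_{\mathcal{I}})$ and $G(\tau_{\mathcal{C}})$ from~\eqref{con:m.2} together with~\eqref{con:c.1} furnishes left-continuity of both $G(\tau)$-paths and the integrands outside countable sets; and the two integrability requirements reduce respectively to~\eqref{eq:con.3} and~\eqref{con:c.3}. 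The proposition then delivers
\begin{equation*}
-\tilde{E}\bigg[\int_{]s,T]} D_{s,t}(r)\null_{\mathcal{I}}\B_{t}(\mathscr{V})G_{t}(\tau_{\mathcal{C}})\,dG_{t}(\tau_{\mathcal{I}})\,\bigg|\,\mathscr{F}_{s}\bigg]
\end{equation*}
from the first application and the $\null_{\mathcal{C}}\B$-analogue from the second, which together form the singular part of $\int_{s}^{T}D_{s,t}(r)\,dA_{t}(\mathscr{V})$.

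\paragraph{Assembly.} Adding the four contributions from the second paragraph to the two contributions from the third paragraph, and comparing with~\eqref{eq:finite variation process}, identifies the total as $\tilde{E}[D_{s,T}(r)\Phi(S,V)G_{T}(\tau) + \int_{s}^{T}D_{s,t}(r)\,dA_{t}(\mathscr{V})\,|\,\mathscr{F}_{s}]$, which is the claimed identity~\eqref{eq:conditional expectation representation}. The only genuinely delicate step is the third paragraph: one must keep the bookkeeping of which pre-default version feeds the $\tilde{F}^{+}$-term, so that the integrand $\null_{\mathcal{I}}\B(\mathscr{V})$ entering the Stieltjes integral matches the definition~\eqref{eq:inhomogeneity process components} exactly, and one must verify that all the path-regularity clauses of Proposition~\ref{pr:conditional integral representation} follow from~\eqref{con:m.2} and~\eqref{con:c.1}.
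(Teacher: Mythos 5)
Your proof follows the same two-step strategy as the paper: the maturity payoff and the four absolutely continuous cost/benefit integrals are handled via the survival process and Lemma~\ref{le:stopped integral} (the paper invokes the equivalence of~\eqref{eq:con.2} and~\eqref{con:c.2} for the same purpose), and the default cash flows are split on the disjoint events $\{\tau_{\mathcal{I}} < \tau_{\mathcal{C}}\}$, $\{\tau_{\mathcal{C}} < \tau_{\mathcal{I}}\}$ — legitimate by~\eqref{eq:market condition 2} — and converted to Riemann–Stieltjes integrals against $dG(\tau_{\mathcal{I}})$, $dG(\tau_{\mathcal{C}})$ by two applications of Proposition~\ref{pr:conditional integral representation}. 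Your paragraph on verifying the four hypotheses of that proposition (progressive measurability and bounded paths from~\eqref{con:c.1} and~\eqref{con:m.3}, the regularity of $G(\tau_{\mathcal{I}})$, $G(\tau_{\mathcal{C}})$ from~\eqref{con:m.2}, integrability from~\eqref{eq:con.3} and~\eqref{con:c.3}) is a correct, slightly more explicit spelling-out of what the paper just asserts in one line; the final reassembly into $dA_{t}(\mathscr{V})$ matches the paper as well.
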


This result leads us to a valuation equation involving default-free information only. Namely, by a solution to the \emph{pre-default valuation equation}
\begin{equation}\label{eq:pre-default valuation}\tag{VE}
\mathscr{V}_{s}G_{s}(\tau) = \tilde{E}\bigg[D_{s,T}(r)\Phi(S,V)G_{T}(\tau) + \int_{s}^{T}D_{s,t}(r)\,dA_{t}(\mathscr{V})\,\bigg|\,\mathscr{F}_{s}\bigg]\quad\text{a.s.}
\end{equation}
for $s\in [t_{0},T]$ we shall mean a process $\mathscr{V}\in\mathscr{S}$ such that the path regularity condition~\eqref{con:c.1} and the two integrability conditions~\eqref{con:c.2} and~\eqref{con:c.3} hold and the almost sure identity in~\eqref{eq:pre-default valuation} is satisfied for each $s\in [t_{0},T]$. 

In this case, $\mathscr{V}_{s}G_{s}(\tau)$ is necessarily integrable and $\mathscr{V}_{T} = \Phi(S,V)$ a.s.~on $\{G_{T}(\tau) >0 \}$. Thus, we obtain a \emph{martingale characterisation} for any such \emph{pre-default value process}.

\begin{Proposition}\label{pr:martingale characterisation}
Assume that~\eqref{con:m.1}-\eqref{con:m.4} are valid and $\mathscr{V}\in\mathscr{S}$ satisfies~\eqref{con:c.1}-\eqref{con:c.3}. Then the $(\mathscr{F}_{t})_{t\in [0,T]}$-adapted continuous process
\begin{equation}\label{eq:finite variation process 2}
[0,T]\times\Omega\rightarrow\mathbb{R},\quad (t,\omega)\mapsto\int_{0}^{t}D_{0,s}(r)(\omega)\,dA_{s}(\mathscr{V})(\omega)
\end{equation}
of finite variation is integrable. Moreover, $\mathscr{V}$ solves~\eqref{eq:pre-default valuation} if and only if $\null_{\mathscr{V}}M\in\mathscr{S}$ defined via
\begin{equation}\label{eq:pre-default martingale}
\null_{\mathscr{V}}M_{t} := D_{0,t}(r)\mathscr{V}_{t}G_{t}(\tau) + \int_{0}^{t}D_{0,s}(r)\,dA_{s}(\mathscr{V})
\end{equation}
is an $(\mathscr{F}_{t})_{t\in [t_{0},T]}$-martingale under $\tilde{P}$ and $\mathscr{V}_{T} = \Phi(S,V)$ a.s.~on $\{G_{T}(\tau) > 0\}$.
\end{Proposition}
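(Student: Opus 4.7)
The proof plan has two pieces. For the integrability of the continuous finite variation process in~\eqref{eq:finite variation process 2}, I would bound it by its total variation and use the decomposition~\eqref{eq:finite variation process} of $A(\mathscr{V})$ to split the total variation into three $r$-discounted pieces: a Lebesgue integral of $|\null_{0}\B(\mathscr{V})|G(\tau)$, and two Stieltjes integrals against $dV(\tau_{\mathcal{I}})$ and $dV(\tau_{\mathcal{C}})$ weighted by $G(\tau_{\mathcal{C}})$ and $G(\tau_{\mathcal{I}})$ respectively. These are $\tilde{P}$-integrable at $s=0$ by~\eqref{con:c.2} and~\eqref{con:c.3}, precisely as spelled out in the discussion immediately preceding the proposition; adaptedness and continuity of the process are clear from the continuity of $A(\mathscr{V})$ and the $(\mathscr{F}_{t})_{t\in [0,T]}$-progressive measurability of $r$.

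For the equivalence, the key ingredient is the multiplicative cocycle identity $D_{0,s}(r)D_{s,t}(r) = D_{0,t}(r)$ for $s \le t$, together with the fact that $D_{0,s}(r)$, $\mathscr{V}_{s}$, $G_{s}(\tau)$, and $\int_{0}^{s} D_{0,t}(r)\,dA_{t}(\mathscr{V})$ are all $\mathscr{F}_{s}$-measurable. Starting from~\eqref{eq:pre-default valuation} at an arbitrary $s \in [t_{0},T]$, I would multiply both sides by the $\mathscr{F}_{s}$-measurable factor $D_{0,s}(r)$ and pull it inside the conditional expectation, then add the $\mathscr{F}_{s}$-measurable quantity $\int_{0}^{s} D_{0,t}(r)\,dA_{t}(\mathscr{V})$ to each side. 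The left-hand side becomes $\null_{\mathscr{V}}M_{s}$, while the right-hand side becomes the $\mathscr{F}_{s}$-conditional expectation of the $\tilde{P}$-integrable $\mathscr{F}_{T}$-measurable random variable $D_{0,T}(r)\Phi(S,V)G_{T}(\tau) + \int_{0}^{T} D_{0,t}(r)\,dA_{t}(\mathscr{V})$, which yields the $(\mathscr{F}_{t})_{t\in [t_{0},T]}$-martingale property of $\null_{\mathscr{V}}M$ under $\tilde{P}$. The terminal condition $\mathscr{V}_{T} = \Phi(S,V)$ a.s.~on $\{G_{T}(\tau) > 0\}$ is obtained by evaluating~\eqref{eq:pre-default valuation} at $s = T$ and cancelling the common factor $G_{T}(\tau)$.

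The converse reverses this chain: write $\null_{\mathscr{V}}M_{s} = \tilde{E}[\null_{\mathscr{V}}M_{T}\,|\,\mathscr{F}_{s}]$, substitute $\Phi(S,V)G_{T}(\tau)$ for $\mathscr{V}_{T}G_{T}(\tau)$ inside the conditional expectation (valid a.s.~since the two agree on $\{G_{T}(\tau)>0\}$ and both vanish on its complement), cancel the $\mathscr{F}_{s}$-measurable integral $\int_{0}^{s} D_{0,t}(r)\,dA_{t}(\mathscr{V})$ appearing on both sides, and divide by the strictly positive and $\mathscr{F}_{s}$-measurable factor $D_{0,s}(r)$ while invoking the cocycle identity in reverse to recover~\eqref{eq:pre-default valuation}.

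I do not foresee any genuine obstacle: the statement is essentially a translation between the implicit integral equation~\eqref{eq:pre-default valuation} and the martingale formulation, powered by discount factor arithmetic. The only delicate point is the bookkeeping around the replacement of $\mathscr{V}_{T}G_{T}(\tau)$ by $\Phi(S,V)G_{T}(\tau)$, which relies precisely on the a.s.~terminal identification on $\{G_{T}(\tau) > 0\}$ and the trivial vanishing of both quantities on its complement.
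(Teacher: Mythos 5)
Your proposal is correct and takes essentially the same route as the paper: bound the process~\eqref{eq:finite variation process 2} by the integrable total-variation integral coming from the decomposition~\eqref{eq:finite variation process} together with~\eqref{con:c.2}--\eqref{con:c.3}, then translate between~\eqref{eq:pre-default valuation} and the martingale identity $\null_{\mathscr{V}}M_{s}=\tilde{E}[\null_{\mathscr{V}}M_{T}\,|\,\mathscr{F}_{s}]$ via the cocycle identity $D_{0,s}(r)D_{s,t}(r)=D_{0,t}(r)$, the $\mathscr{F}_{s}$-measurability of $\int_{0}^{s}D_{0,t}(r)\,dA_{t}(\mathscr{V})$, and the terminal replacement $\mathscr{V}_{T}G_{T}(\tau)=\Phi(S,V)G_{T}(\tau)$ a.s. The paper is marginally more explicit about verifying the integrability of $\null_{\mathscr{V}}M_{s}$ (and of $\mathscr{V}_{s}G_{s}(\tau)$ in the converse) before pushing $D_{0,s}(\pm r)$ through the conditional expectation, but your argument contains the same content.
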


For an \emph{implicit backward stochastic integral representation of any pre-default value process} we assume until the end of this section that $(\Omega,\mathscr{F},(\mathscr{F}_{t})_{t\in [0,T]},\tilde{P})$ satisfies the usual conditions.

\begin{Proposition}\label{pr:backward stochastic representation}
Let~\eqref{con:m.1}-\eqref{con:m.4} be valid and~\eqref{con:c.1}-\eqref{con:c.3} hold for $\mathscr{V}\in\mathscr{S}$. Then $\mathscr{V}G(\tau)$ is a continuous $(\mathscr{F}_{t})_{t\in [t_{0},T]}$-semimartingale if and only if $\null_{\mathscr{V}}M$ is. In this case,
\begin{equation}\label{eq:auxiliary backward integral representation}
\mathscr{V}_{s}G_{s}(\tau) =  \mathscr{V}_{T}G_{T}(\tau) + \int_{s}^{T}\big(dA_{t}(\mathscr{V}) - r_{t}\mathscr{V}_{t}G_{t}(\tau)\,dt\big) - \int_{s}^{T}D_{0,t}(-r)\,d\null_{\mathscr{V}}M_{t}
\end{equation}
for any $s\in [t_{0},T]$ a.s. If in addition $G(\tau) > 0$, then $\null_{\mathscr{V}}M$ is, up to indistinguishability, the unique continuous $(\mathscr{F}_{t})_{t\in [t_{0},T]}$-semimartingale satisfying
\begin{equation}\label{eq:backward stochastic representation}
\begin{split}
\mathscr{V}_{s} &= \mathscr{V}_{T} + \int_{s}^{T}\big(\null_{0}\B_{t}(\mathscr{V}) - r_{t}\mathscr{V}_{t}\big)\,dt - \int_{s}^{T}\frac{\null_{\mathcal{I}}\B_{t}(\mathscr{V}) - \mathscr{V}_{t}}{G_{t}(\tau_{\mathcal{I}})}\,dG_{t}(\tau_{\mathcal{I}})\\
&\quad - \int_{s}^{T}\frac{\null_{\mathcal{C}}\B_{t}(\mathscr{V}) - \mathscr{V}_{t}}{G_{t}(\tau_{\mathcal{C}})}\,dG_{t}(\tau_{\mathcal{C}}) - \int_{s}^{T}\frac{D_{0,t}(-r)}{G_{t}(\tau)}\,d\null_{\mathscr{V}}M_{t}
\end{split}
\end{equation}
for all $s\in [t_{0},T]$ a.s.~and $\null_{\mathscr{V}}M_{t_{1}} = D_{0,t_{1}}(r)\mathscr{V}_{t_{1}}G_{t_{1}}(\tau) + \int_{0}^{t_{1}}D_{0,t}(r)\,dA_{t}(\mathscr{V})$ a.s.~for some $t_{1}\in [t_{0},T]$.
\end{Proposition}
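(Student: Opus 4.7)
The plan is to treat the three claims in order: the semimartingale equivalence, the representation~\eqref{eq:auxiliary backward integral representation}, and then~\eqref{eq:backward stochastic representation} with uniqueness. For the first claim I would simply rewrite the definition of $\null_{\mathscr{V}}M$ as
\begin{equation*}
\null_{\mathscr{V}}M_{t} - D_{0,t}(r)\mathscr{V}_{t}G_{t}(\tau) = \int_{0}^{t}D_{0,s}(r)\,dA_{s}(\mathscr{V}),
\end{equation*}
observe that the right-hand side is a continuous process of finite variation (by~\eqref{eq:finite variation process} together with the path properties of $D_{0,\cdot}(r)$ and \eqref{con:m.2}), and recall that $D_{0,\cdot}(r)$ is continuous, strictly positive and of finite variation. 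Hence $\null_{\mathscr{V}}M$ is a continuous $(\mathscr{F}_{t})_{t\in[t_{0},T]}$-semimartingale if and only if $\mathscr{V}G(\tau)$ is, since multiplying or adding such a process preserves this property.

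To derive~\eqref{eq:auxiliary backward integral representation}, I would apply integration by parts to the product $\mathscr{V}_{t}G_{t}(\tau) = D_{0,t}(-r)\cdot [D_{0,t}(r)\mathscr{V}_{t}G_{t}(\tau)]$. Since $D_{0,\cdot}(-r)$ is continuous of finite variation with differential $r_{t}D_{0,t}(-r)\,dt$, and since the definition of $\null_{\mathscr{V}}M$ gives $d[D_{0,t}(r)\mathscr{V}_{t}G_{t}(\tau)] = d\null_{\mathscr{V}}M_{t} - D_{0,t}(r)\,dA_{t}(\mathscr{V})$, I obtain
\begin{equation*}
d[\mathscr{V}_{t}G_{t}(\tau)] = -dA_{t}(\mathscr{V}) + r_{t}\mathscr{V}_{t}G_{t}(\tau)\,dt + D_{0,t}(-r)\,d\null_{\mathscr{V}}M_{t},
\end{equation*}
which after integrating on $[s,T]$ and rearranging is exactly~\eqref{eq:auxiliary backward integral representation}. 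There is no bracket contribution because $D_{0,\cdot}(-r)$ is of finite variation.

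Next, under the additional hypothesis $G(\tau) > 0$, I would apply integration by parts once more, this time to $\mathscr{V}_{t} = (1/G_{t}(\tau))\cdot(\mathscr{V}_{t}G_{t}(\tau))$, where by \eqref{con:m.2} we have $G(\tau) = G(\tau_{\mathcal{I}})G(\tau_{\mathcal{C}})$ with both factors continuous of finite variation. This gives $dG_{t}(\tau) = G_{t}(\tau_{\mathcal{C}})\,dG_{t}(\tau_{\mathcal{I}}) + G_{t}(\tau_{\mathcal{I}})\,dG_{t}(\tau_{\mathcal{C}})$, and thus
\begin{equation*}
d(1/G_{t}(\tau)) = -\frac{1}{G_{t}(\tau_{\mathcal{I}})^{2}G_{t}(\tau_{\mathcal{C}})}\,dG_{t}(\tau_{\mathcal{I}}) - \frac{1}{G_{t}(\tau_{\mathcal{I}})G_{t}(\tau_{\mathcal{C}})^{2}}\,dG_{t}(\tau_{\mathcal{C}}).
\end{equation*}
Substituting the differential of $\mathscr{V}G(\tau)$ obtained above, and expanding $(1/G_{t}(\tau))\,dA_{t}(\mathscr{V})$ by plugging in the three pieces of~\eqref{eq:finite variation process}, the factors $G_{s}(\tau_{\mathcal{C}})$ and $G_{s}(\tau_{\mathcal{I}})$ cancel, leaving the three driver integrands $\null_{0}\B_{t}(\mathscr{V})$, $\null_{\mathcal{I}}\B_{t}(\mathscr{V})/G_{t}(\tau_{\mathcal{I}})$, $\null_{\mathcal{C}}\B_{t}(\mathscr{V})/G_{t}(\tau_{\mathcal{C}})$, and collecting terms yields~\eqref{eq:backward stochastic representation}. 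Again all bracket corrections vanish because the three drivers are continuous of finite variation.

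For uniqueness, I would reverse the preceding computation: if $N$ is a continuous semimartingale satisfying~\eqref{eq:backward stochastic representation} with the prescribed value at some $t_{1}\in[t_{0},T]$, then applying integration by parts to $D_{0,t}(r)\mathscr{V}_{t}G_{t}(\tau)$ using the differential of $\mathscr{V}$ read off from~\eqref{eq:backward stochastic representation} with $N$ in place of $\null_{\mathscr{V}}M$ produces $D_{0,t}(r)\mathscr{V}_{t}G_{t}(\tau) + \int_{0}^{t}D_{0,s}(r)\,dA_{s}(\mathscr{V}) - N_{t}$ with zero differential on $[t_{0},T]$; matching the value at $t_{1}$ forces $N = \null_{\mathscr{V}}M$ on $[t_{0},T]$ a.s. The main obstacle will be the bookkeeping in the second integration by parts — writing $A(\mathscr{V})$ as three separate Stieltjes integrals, keeping track of which factor of $G(\tau_{\mathcal{I}})$ or $G(\tau_{\mathcal{C}})$ cancels against which, and verifying that the integrability ensured by~\eqref{con:c.2} and~\eqref{con:c.3} justifies the splittings. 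Because all drivers are continuous of finite variation and $G(\tau)$ is bounded away from zero on $[t_{0},T]$ (by the additional assumption), these manipulations remain in the classical Stieltjes framework.
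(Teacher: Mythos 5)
Your proposal is correct and follows essentially the same route as the paper: the equivalence of semimartingale properties is read off from the definition of $\null_{\mathscr{V}}M$ using that $D_{0,\cdot}(r)$ and $\int_{0}^{\cdot}D_{0,s}(r)\,dA_{s}(\mathscr{V})$ are continuous and of finite variation, the identity~\eqref{eq:auxiliary backward integral representation} comes from It\^o's product rule for $\mathscr{V}G(\tau)=D_{0,\cdot}(-r)\cdot\big(D_{0,\cdot}(r)\mathscr{V}G(\tau)\big)$, the identity~\eqref{eq:backward stochastic representation} follows by a second application of the product rule to $\mathscr{V}=(1/G(\tau))\cdot\big(\mathscr{V}G(\tau)\big)$ together with the expansion of $dA(\mathscr{V})$ from~\eqref{eq:finite variation process}, and uniqueness is obtained by reversing the computation and matching values at $t_{1}$. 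The paper likewise presents the three steps via It\^o's product rule, writes the $1/G(\tau)$ step in the equivalent form $d\mathscr{V}=(1/G(\tau))\,d(\mathscr{V}G(\tau))-(\mathscr{V}/G(\tau))\,dG(\tau)$, and handles uniqueness by computing $d\null_{\mathscr{V}}M=dM$ directly; these are the same manipulations.
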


As a consequence of the preceding two results, we are able to \emph{characterise pre-default value processes that are semimartingales}.

\begin{Corollary}\label{co:semimartingale characterisation}
Let~\eqref{con:m.1}-\eqref{con:m.4} hold, $G(\tau) > 0$, $\mathscr{V}\in\mathscr{S}$ be continuous and~\eqref{con:c.1}-\eqref{con:c.3} be valid. Then $\mathscr{V}$ is an $(\mathscr{F}_{t})_{t\in [t_{0},T]}$-semimartingale solving~\eqref{eq:pre-default valuation} if and only if
\begin{equation*}
\text{$D_{0,t_{0}}(r)\mathscr{V}_{t_{0}}G_{t_{0}}(\tau)$ is $\tilde{P}$-integrable},\quad \mathscr{V}_{T} = \Phi(S,V)\quad\text{a.s.}
\end{equation*}
and there is a continuous $(\mathscr{F}_{t})_{t\in [t_{0},T]}$-martingale $M$ that takes the role of $\null_{\mathscr{V}}M$ in~\eqref{eq:backward stochastic representation}. If this is the case, then $M_{t} - M_{t_{0}} = \null_{\mathscr{V}}M_{t} - \null_{\mathscr{V}}M_{t_{0}}$ for all $t\in [t_{0},T]$ a.s.
\end{Corollary}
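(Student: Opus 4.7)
The plan is to combine Propositions~\ref{pr:martingale characterisation} and~\ref{pr:backward stochastic representation}. For the forward direction, suppose $\mathscr{V}$ is a continuous $(\mathscr{F}_{t})_{t\in[t_{0},T]}$-semimartingale solving~\eqref{eq:pre-default valuation}. By Proposition~\ref{pr:martingale characterisation}, $\null_{\mathscr{V}}M$ from~\eqref{eq:pre-default martingale} is a $\tilde{P}$-martingale on $[t_{0},T]$ and $\mathscr{V}_{T} = \Phi(S,V)$ a.s.~on $\{G_{T}(\tau) > 0\}$, hence a.s.~by the assumption $G(\tau) > 0$. Continuity of $\mathscr{V}$, of $G(\tau)$ (by~\eqref{con:m.2} and Remark~\ref{re:identity}), and of~\eqref{eq:finite variation process 2} imply that $\null_{\mathscr{V}}M$ is continuous, while the identity $D_{0,t_{0}}(r)\mathscr{V}_{t_{0}}G_{t_{0}}(\tau) = \null_{\mathscr{V}}M_{t_{0}} - \int_{0}^{t_{0}}D_{0,s}(r)\,dA_{s}(\mathscr{V})$ combined with the $\tilde{P}$-integrability of both terms on the right delivers the first bullet condition. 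Since $G(\tau)$ is continuous of finite variation, $\mathscr{V}G(\tau)$ and therefore $\null_{\mathscr{V}}M$ are continuous semimartingales, so Proposition~\ref{pr:backward stochastic representation} provides~\eqref{eq:backward stochastic representation} with $M := \null_{\mathscr{V}}M$.

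For the reverse direction, assume the three listed conditions along with a continuous $(\mathscr{F}_{t})_{t\in[t_{0},T]}$-martingale $M$ satisfying~\eqref{eq:backward stochastic representation}. The summands on the right of~\eqref{eq:backward stochastic representation} are either continuous processes of finite variation or a continuous stochastic integral against $M$, so $\mathscr{V}$ is a continuous semimartingale on $[t_{0},T]$. Then $\mathscr{V}G(\tau)$ is a continuous semimartingale, hence so is $\null_{\mathscr{V}}M$, and Proposition~\ref{pr:backward stochastic representation} yields~\eqref{eq:backward stochastic representation} with $\null_{\mathscr{V}}M$ in place of $M$.

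The main obstacle is to identify $M$ with $\null_{\mathscr{V}}M$ despite~\eqref{eq:backward stochastic representation} prescribing no a priori initial value. To handle this, set $\tilde{M}_{t} := M_{t} + \null_{\mathscr{V}}M_{t_{0}} - M_{t_{0}}$ on $[t_{0},T]$; since~\eqref{eq:backward stochastic representation} involves $M$ only through its stochastic differential, $\tilde{M}$ is a continuous $(\mathscr{F}_{t})_{t\in[t_{0},T]}$-semimartingale still satisfying~\eqref{eq:backward stochastic representation}, and $\tilde{M}_{t_{0}} = \null_{\mathscr{V}}M_{t_{0}}$. The uniqueness clause of Proposition~\ref{pr:backward stochastic representation}, applied with $t_{1} = t_{0}$, then forces $\tilde{M}$ to be indistinguishable from $\null_{\mathscr{V}}M$, yielding $M_{t} - M_{t_{0}} = \null_{\mathscr{V}}M_{t} - \null_{\mathscr{V}}M_{t_{0}}$ a.s.~for every $t\in[t_{0},T]$. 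Since $\null_{\mathscr{V}}M_{t_{0}}$ is $\mathscr{F}_{t_{0}}$-measurable and, by the integrability assumption together with the $\tilde{P}$-integrability of~\eqref{eq:finite variation process 2}, lies in $L^{1}(\tilde{P})$, $\null_{\mathscr{V}}M$ itself is an $(\mathscr{F}_{t})_{t\in[t_{0},T]}$-martingale under $\tilde{P}$. Combined with $\mathscr{V}_{T} = \Phi(S,V)$ a.s.~and $G(\tau)>0$, Proposition~\ref{pr:martingale characterisation} concludes that $\mathscr{V}$ solves~\eqref{eq:pre-default valuation}.
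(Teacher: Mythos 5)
Your argument is correct and follows essentially the same route as the paper: use Proposition~\ref{pr:martingale characterisation} for the martingale characterisation and terminal value, derive the integrability of $D_{0,t_{0}}(r)\mathscr{V}_{t_{0}}G_{t_{0}}(\tau)$ from that of $\null_{\mathscr{V}}M_{t_{0}}$, and obtain~\eqref{eq:backward stochastic representation} from Proposition~\ref{pr:backward stochastic representation}, with the shifted process $M - M_{t_{0}} + \null_{\mathscr{V}}M_{t_{0}}$ and the uniqueness clause at $t_{1}=t_{0}$ closing the converse. You spell out a couple of steps the paper leaves implicit (e.g.\ that $\null_{\mathscr{V}}M_{t_{0}}\in L^{1}(\tilde{P})$, so $\null_{\mathscr{V}}M$ inherits the martingale property from $M$), but there is no substantive divergence.
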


In addition to our four model assumptions let $G(\tau_{\mathcal{I}})$ and $G(\tau_{\mathcal{C}})$ be not only continuous and of finite variation but absolutely continuous and positive. Then the same holds for $G(\tau)$ and any continuous $\mathscr{V}\in\mathscr{S}$ satisfying~\eqref{con:c.1}-\eqref{con:c.3} solves~\eqref{eq:pre-default valuation} if and only if
\begin{equation}\label{eq:specific representation}
\begin{split}
\mathscr{V}_{s} &= \tilde{E}\bigg[D_{s,T}(r)\Phi(S,V)\frac{G_{T}(\tau)}{G_{s}(\tau)}\,\bigg|\,\mathscr{F}_{s}\bigg]\\
&\quad + \tilde{E}\bigg[\int_{s}^{T}D_{s,t}(r)\frac{G_{t}(\tau)}{G_{s}(\tau)}\bigg(\B_{t}(\mathscr{V}) + \bigg(r_{t} - \frac{\dot{G}_{t}(\tau)}{G_{t}(\tau)}\bigg)\mathscr{V}_{t}\bigg)\,dt\,\bigg|\,\mathscr{F}_{s}\bigg]
\end{split}
\end{equation}
for any $s\in [t_{0},T]$ with the $(\mathscr{F}_{t})_{t\in [0,T]}$-time-dependent random functional $\B$ defined on the set of all continuous $X\in\mathscr{S}$ for which $C(X)$, $F(X)$, $H(X)$ and $\varepsilon(X)$ are c\a gl\a d by
\begin{equation}\label{eq:inhomogeneity process}
\B_{t}(X) := \null_{0}\B_{t}(X) - r_{t}X_{t} - \frac{\dot{G}_{t}(\tau_{\mathcal{I}})}{G_{t}(\tau_{\mathcal{I}})}(\null_{\mathcal{I}}\B_{t}(X) - X_{t}) - \frac{\dot{G}_{t}(\tau_{\mathcal{C}})}{G_{t}(\tau_{\mathcal{C}})}(\null_{\mathcal{C}}\B_{t}(X) - X_{t}).
\end{equation}
Moreover, if $\mathscr{V}$, or equivalently, $\null_{\mathscr{V}}M$ is an $(\mathscr{F}_{t})_{t\in [t_{0},T]}$-semimartingale, then we may rewrite the implicit backward stochastic integral representation~\eqref{eq:backward stochastic representation} in the form
\begin{equation}\label{eq:specific representation 2}
\mathscr{V}_{s} = \mathscr{V}_{T} + \int_{s}^{T}\B_{t}(\mathscr{V})\,dt - \int_{s}^{T}\frac{D_{0,t}(-r)}{G_{t}(\tau)}\,d\null_{\mathscr{V}}M_{t}\quad\text{for all $s\in [t_{0},T]$ a.s.}
\end{equation}

\begin{Example}\label{ex:default time specification}
For both $i\in\{\mathcal{I},\mathcal{C}\}$ let $\lambda^{(i)}$ be an $\R_{+}$-valued $(\mathscr{F}_{t})_{t\in [0,T]}$-progressively measurable process with integrable paths such that every $\omega\in\Omega$ satisfies
\begin{equation*}
\int_{0}^{t_{\omega}}\lambda_{s}^{(i)}(\omega)\,ds > 0\quad\text{for some $t_{\omega}\in ]0,T]$},
\end{equation*}
which holds if $\lambda^{(i)} > 0$, for instance. Further, let $\xi_{i}$ be an $]0,\infty[$-valued $\tilde{\mathscr{F}}_{0}$-measurable random variable that is gamma distributed with shape $\alpha_{i} > 0$ and rate $\beta_{i} > 0$ such that
\begin{equation*}
\tau_{i} = \inf\bigg\{t\in [0,T]\,\bigg|\, \int_{0}^{t}\lambda_{s}^{(i)}\,ds \geq \xi_{i}\bigg\}.
\end{equation*}
We suppose that $(\xi_{\mathcal{I}},\xi_{\mathcal{C}})$ is independent of $\mathscr{F}_{T}$ and $\xi_{\mathcal{I}}$ and $\xi_{\mathcal{C}}$ are independent. Then Lemma~\ref{le:hitting time} and Example~\ref{ex:gamma distributed hitting times} show that~\eqref{eq:market condition 2} and~\eqref{con:m.2} hold for  $G(\tau) = G(\tau_{\mathcal{I}})G(\tau_{\mathcal{C}})$ and both $G(\tau_{\mathcal{I}})$ and $G(\tau_{\mathcal{C}})$ are absolutely continuous and positive. Moreover,
\begin{equation*}
-\frac{\dot{G}_{t}(\tau_{i})}{G_{t}(\tau_{i})} = \frac{\beta_{i}^{\alpha_{i}}\lambda_{t}^{(i)}}{\gamma\big(\alpha_{i},\beta_{i}\int_{0}^{t}\lambda_{s}^{(i)}\,ds)}\bigg(\int_{0}^{t}\lambda_{s}^{(i)}\,ds\bigg)^{\alpha_{i} - 1}\exp\bigg(-\beta_{i}\int_{0}^{t}\lambda_{s}^{(i)}\,ds\bigg)
\end{equation*}
for a.e.~$t\in [0,T]$ for both $i\in\{\mathcal{I},\mathcal{C}\}$, where $\gamma$ is the upper incomplete gamma function, and this formula this reduces to $-\dot{G}(\tau_{i})/G(\tau_{i}) = \beta_{i}\lambda^{(i)}$ a.e.~when $\alpha_{i}=1$. Thus, in the case $\alpha_{\mathcal{I}}=\alpha_{\mathcal{C}}=1$ we have
\begin{align*}
\mathrm{B}_{t}(X) &= \null_{0}\B_{t}(X) - r_{t}X_{t} +\big(\beta_{\mathcal{I}}\lambda_{t}^{(\mathcal{I})} + \beta_{\mathcal{C}}\lambda_{t}^{(\mathcal{C})}\big)\big(\varepsilon_{t}(X) - X_{t}\big)\\
&\quad + \beta_{\mathcal{I}}\lambda_{t}^{(\mathcal{I})}\mathrm{LGD}_{\mathcal{I}}((\varepsilon_{t} - C_{t})^{-} + F_{t}^{+})(X)\mathbbm{1}_{\{\mathcal{I} =\mathcal{B}\}} - \beta_{\mathcal{C}}\lambda_{t}^{(\mathcal{C})}\mathrm{LGD}_{\mathcal{C}}(\varepsilon_{t} - C_{t})^{+}(X)
\end{align*}
for a.e.~$t\in [0,T]$ and any $X\in\mathscr{S}$ for which $C(X)$, $F(X)$, $H(X)$ and $\varepsilon(X)$ are c\a gl\a d. In particular, if $\xi_{\mathcal{I}}$ and $\xi_{\mathcal{C}}$ are exponentially distributed with mean one and the \emph{financing hypothesis}
\begin{equation}\label{eq:financing hypothesis}
X_{t} = C_{t}(X) + F_{t}(X)\quad\text{for all $(t,X)\in [0,T]\times\mathscr{S}$}
\end{equation}
holds, then~\eqref{eq:specific representation} and~\eqref{eq:specific representation 2} yield the respective identities~(5) and~(6) for the pre-default value process in~\cite{BriFraPal19}, where $(\mathscr{F}_{t})_{t\in [0,T]}$ is the augmented filtration of a standard Brownian motion and the martingale representation theorem may be applied.
\end{Example}

\section{A parabolic equation for the pre-default valuation}\label{se:4}

\subsection{A general stochastic volatility model}\label{se:4.1}

In the sequel, let the filtered probability space $(\Omega,\mathscr{F},(\mathscr{F}_{t})_{t\in [0,T]},P)$ satisfy the usual conditions and suppose that there are two standard $(\mathscr{F}_{t})_{t\in [0,T]}$-Brownian motions $\hat{W}$ and $\tilde{W}$ with covariation
\begin{equation*}
\langle \hat{W},\tilde{W}\rangle_{t} = \int_{0}^{t}\rho(s)\,ds \quad\text{for all $t\in [0,T]$ a.s},
\end{equation*}
where $\rho:[0,T]\rightarrow ]-1,1[$ is a measurable function satisfying $\int_{0}^{T}(1-\rho(s)^{2})^{-1}\,ds < \infty$. Let $b:[0,T]\rightarrow\mathbb{R}$ and $\zeta,\eta,\theta:[0,T]\times\mathbb{R}\rightarrow\mathbb{R}$ be measurable and consider the two-dimensional SDE starting at time $t_{0}\in [0,T]$:
\begin{equation}\label{eq:stochastic volatility model}
\begin{split}
dS_{t} &= b(t)S_{t}\,dt + \theta(t,V_{t})S_{t}\,d\hat{W}_{t},\\
dV_{t} &= \zeta(t,V_{t})\,dt + \eta(t,V_{t})\,d\tilde{W}_{t}
\end{split}
\end{equation}
for $t\in [t_{0},T]$. Given any weak solution $(S,V)$, we will interpret $S$ as \emph{price process of the risky asset} and $V$ as \emph{quasi variance} or \emph{quasi squared volatility process} influencing $S$ via the function $\theta$, which will satisfy the $1/2$-H\oe lder continuity condition in~\eqref{con:v.2}.

Under a weak integrability condition, the unique solution to the linear SDE in~\eqref{eq:stochastic volatility model} is readily recalled, by using stochastic exponentials for local martingales. For this purpose, let $V$ be an adapted continuous process with positive paths.
\begin{enumerate}[label=(V.\arabic*), ref=V.\arabic*, leftmargin=\widthof{(V.1)} + \labelsep]
\item\label{con:v.1} $b$ is integrable and for any compact set $K$ in $]0,\infty[$ there is $k_{\theta}\in\mathscr{L}^{2}(\mathbb{R}_{+})$ such that $|\theta(\cdot,v)| \leq k_{\theta}$ for each $v\in K$ a.e.
\end{enumerate}

\begin{Lemma}\label{le:price process representation}
Let~\eqref{con:v.1} hold and $\chi$ be an $\mathscr{F}_{t_{0}}$-measurable random variable. Then the first SDE in~\eqref{eq:stochastic volatility model} admits a unique solution $S$ such that $S_{t_{0}} = \chi$ a.s. In fact,
\begin{equation}\label{eq:stochastic exponential}
S_{t} = \chi e^{\int_{t_{0}}^{t}\theta(s,V_{s})\,d\hat{W}_{s} + \int_{t_{0}}^{t}b(s) - \frac{1}{2}\theta(s,V_{s})^{2}\,ds}
\end{equation}
for any $t\in [t_{0},T]$ a.s. In particular, if $\chi$ and $\exp(\frac{1}{2}\int_{t_{0}}^{T}\theta(s,V_{s})^{2}\,ds)$ are integrable, then so is $S$ and $E[S_{t}] = E[\chi]\exp(\int_{t_{0}}^{t}b(s)\,ds)$ for all $t\in [t_{0},T]$.
\end{Lemma}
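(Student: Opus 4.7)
The plan is to exhibit the unique solution in closed form as a stochastic exponential and to read off its mean via Novikov's criterion. Before invoking any of this, one first needs to make sense of the stochastic integral $\int_{t_{0}}^{t}\theta(s,V_{s})\,d\hat{W}_{s}$: for each $\omega\in\Omega$ the continuous strictly positive path $V(\omega)|_{[t_{0},T]}$ lies in a compact subset $K_{\omega}$ of $\,]0,\infty[\,$, and condition~\eqref{con:v.1} then supplies some $k_{\theta}\in\mathscr{L}^{2}(\R_{+})$ dominating $|\theta(\cdot,V_{\cdot}(\omega))|$ a.e. So the integrand is pathwise square integrable on $[t_{0},T]$ and the stochastic integral is a well-defined continuous local martingale.

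For existence I would set $\mathcal{E}_{t}:=\exp\big(\int_{t_{0}}^{t}\theta(s,V_{s})\,d\hat{W}_{s}-\tfrac{1}{2}\int_{t_{0}}^{t}\theta(s,V_{s})^{2}\,ds\big)$ and $S_{t}:=\chi\,e^{\int_{t_{0}}^{t}b(s)\,ds}\mathcal{E}_{t}$; a direct application of It\^{o}'s formula gives $d\mathcal{E}_{t}=\mathcal{E}_{t}\,\theta(t,V_{t})\,d\hat{W}_{t}$, whence $dS_{t}=b(t)S_{t}\,dt+\theta(t,V_{t})S_{t}\,d\hat{W}_{t}$ and $S_{t_{0}}=\chi$ a.s., as required. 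For uniqueness I would let $S'$ be any other solution with $S'_{t_{0}}=\chi$ and apply the It\^{o} product rule to $Y_{t}:=S'_{t}\,e^{-\int_{t_{0}}^{t}b(s)\,ds}\mathcal{E}_{t}^{-1}$, using the standard identity $d(\mathcal{E}_{t}^{-1})=\mathcal{E}_{t}^{-1}(-\theta(t,V_{t})\,d\hat{W}_{t}+\theta(t,V_{t})^{2}\,dt)$. The covariation and drift contributions cancel identically thanks to the linearity of the SDE, so $Y$ is constant with $Y_{t_{0}}=\chi$, forcing $S'=S$ up to indistinguishability.

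For the final assertion, the assumption that $\exp(\tfrac{1}{2}\int_{t_{0}}^{T}\theta(s,V_{s})^{2}\,ds)$ is integrable is exactly Novikov's criterion, so $\mathcal{E}$ is a genuine $(\mathscr{F}_{t})_{t\in[t_{0},T]}$-martingale with $E[\mathcal{E}_{t}\mid\mathscr{F}_{t_{0}}]=\mathcal{E}_{t_{0}}=1$ a.s.\ for every $t\in[t_{0},T]$. Since $\chi$ is $\mathscr{F}_{t_{0}}$-measurable, applying the tower property first to the non-negative product $|\chi|\mathcal{E}_{t}$ yields $E[|\chi|\mathcal{E}_{t}]=E[|\chi|]<\infty$, which both establishes integrability of $\chi\mathcal{E}_{t}$ and justifies the corresponding signed identity $E[\chi\mathcal{E}_{t}]=E[\chi]$. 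Multiplying by the deterministic factor $\exp(\int_{t_{0}}^{t}b(s)\,ds)$ then gives $E[S_{t}]=E[\chi]\exp(\int_{t_{0}}^{t}b(s)\,ds)$ and the integrability of $S_{t}$.

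I do not foresee any serious obstacle: the main content is the recognition of the SDE as a linear equation whose fundamental solution is the stochastic exponential $\mathcal{E}$, and beyond that everything reduces to bookkeeping with It\^{o}'s formula and a single use of Novikov's criterion. The only mildly delicate step is deducing integrability of the signed variable $\chi\mathcal{E}_{t}$, but this is immediate from the monotone-type bound on $|\chi|\mathcal{E}_{t}$ noted above.
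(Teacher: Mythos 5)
Your proof is correct, and for the existence and mean computations it matches the paper's argument essentially line by line. The interesting divergence is in the uniqueness step: you use the integrating-factor argument, computing $d(\mathcal{E}_t^{-1})$ and applying It\^{o}'s product rule to $Y_t = S'_t e^{-\int_{t_0}^t b(s)\,ds}\mathcal{E}_t^{-1}$ to conclude that $Y$ is constant. The paper instead takes the difference of two solutions, localizes via the stopping times $\tau_n = \inf\{t\in[t_0,T] \mid V_t\notin\,]1/n,n[ \text{ or } |S_t|\vee|\tilde{S}_t|\ge n\}$, extracts an $L^2$ bound with It\^{o}'s formula and the local square-integrability of $\theta(\cdot,V_\cdot)$, and closes the argument with Gronwall and $\sup_n\tau_n = \infty$. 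Your route exploits the linearity of the SDE more directly and avoids both localization and Gronwall, giving pathwise indistinguishability in one stroke; the paper's route is more mechanical but is the one that generalizes beyond linear equations. For the mean, both proofs invoke Novikov's condition and then condition on $\mathscr{F}_{t_0}$; your careful passage from $|\chi|\mathcal{E}_t$ to $\chi\mathcal{E}_t$ is exactly what the paper achieves by "splitting $\chi$ into its positive and negative part." No gaps.
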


In view of the preceding lemma, for any $\mathscr{F}_{t_{0}}$-measurable positive random variable $\chi$, there exists a unique solution $S$ to the first SDE in~\eqref{eq:stochastic volatility model} with positive paths such that $S_{t_{0}} = \chi$ a.s. Then the \emph{logarithmised process} $X:=\log(S)$ satisfies
\begin{equation}\label{eq:log-price dynamics}
dX_{t} = \big(b(t) - (1/2)\theta(t,V_{t})^{2}\big)\,dt + \theta(t,V_{t})\,d\hat{W}_{t}\quad\text{for $t\in [t_{0},T]$}
\end{equation}
and it is the unique strong solution to~\eqref{eq:log-price dynamics} with $X_{t_{0}} = \log(\chi)$ a.s. \emph{Growth and comparison estimates} for solutions to such SDEs with different controlling processes follow from a weak integrability and a H\oe lder condition on the function $\theta$:
\begin{enumerate}[label=(V.\arabic*), ref=V.\arabic*, leftmargin=\widthof{(V.2)} + \labelsep]
\setcounter{enumi}{1}
\item\label{con:v.2} There are $v_{0} > 0$ and  $\lambda_{\theta}\in\mathscr{L}^{2}(\mathbb{R}_{+})$ such that $\theta(\cdot,v_{0})$ is square-integrable and $|\theta(\cdot,v) - \theta(\cdot,\tilde{v})| \leq \lambda_{\theta}|v-\tilde{v}|^{1/2}$ for all $v,\tilde{v} > 0$ a.e.
\end{enumerate}

This requirement implies the sublinear growth condition: $|\theta(\cdot,v)| \leq k_{\theta} + \lambda_{\theta}|v|^{1/2}$ for any $v > 0$ a.e. with $k_{\theta} := |\theta(\cdot,v_{0})| + \lambda_{\theta}|v_{0}|^{1/2}$. Thus, if $X_{t_{0}}$ and $b$ were integrable, then the inequalities of Burkholder-Davis-Gundy, Minkowski and Young yield that
\begin{align}\nonumber
E\bigg[\sup_{s\in [t_{0},t]} |X_{s}|\bigg]  - E\big[|X_{t_{0}}|\big] &\leq \int_{t_{0}}^{t}|b(s)| + \frac{1}{2}E\big[\theta(s,V_{s})^{2}\big]\,ds + 2\bigg(\int_{t_{0}}^{t}E\big[\theta(s,V_{s})^{2}\big]\,ds\bigg)^{\frac{1}{2}}\\\label{eq:log-price growth estimate}
&\leq c_{0}(t_{0},t) + c_{1}(t_{0},t)\sup_{s\in [t_{0},t]} E\big[V_{s}\big]
\end{align}
for any $t\in [t_{0},T]$ with the two $\R_{+}$-valued continuous functions $c_{0}$ and $c_{1}$ defined on the set of all $(t_{1},t)\in [0,T]\times [0,T]$ with $t_{1}\leq t$ via
\begin{equation*}
\begin{split}
c_{0}(t_{1},t) &:= \int_{t_{1}}^{t}|b(s)| + k_{\theta}(s)^{2}\,ds + 2\bigg(\int_{t_{1}}^{t}k_{\theta}(s)^{2}\,ds\bigg)^{\frac{1}{2}} + \bigg(\int_{t_{1}}^{t}\lambda_{\theta}(s)^{2}\,ds\bigg)^{\frac{1}{2}}\\
\text{and}\quad c_{1}(t_{1},t) &:= \int_{t_{1}}^{t}\lambda_{\theta}(s)^{2}\,ds + \bigg(\int_{t_{1}}^{t}\lambda_{\theta}(s)^{2}\,ds\bigg)^{\frac{1}{2}}.
\end{split}
\end{equation*}
If in addition the function $[t_{0},T]\rightarrow [0,\infty]$, $t\mapsto E[V_{t}]$ is finite and bounded, then $\sup_{t\in [t_{0},T]} |X_{t}|$ is integrable, entailing that $X$ is uniformly integrable. A similar approach leads to the announced comparison bound.

\begin{Lemma}\label{le:log-price comparison estimate}
Let~\eqref{con:v.2} hold and $V$, $\tilde{V}$ be positive adapted continuous processes. Then any two solutions $X$ and $\tilde{X}$ to~\eqref{eq:log-price dynamics} with underlying processes $V$ and $\tilde{V}$, respectively, satisfy
\begin{equation}\label{eq:loc-price comparison estimate}
\begin{split}
E\bigg[\sup_{s\in [t_{0},t]}|X_{s}^{\sigma} - \tilde{X}_{s}^{\sigma}|\bigg] &\leq E\big[|X_{t_{0}} - \tilde{X}_{t_{0}}|\big]\\
&\quad + c_{2}(t_{0},t)\sup_{s\in [t_{0},t]}\big(1 + E\big[V_{s}^{\sigma}\big] + E\big[\tilde{V}_{s}^{\sigma}\big]\big)^{\frac{1}{2}} E\big[|V_{s}^{\sigma} - \tilde{V}_{s}^{\sigma}|\big]^{\frac{1}{2}}
\end{split}
\end{equation}
for all $t\in [t_{0},T]$ and each stopping time $\sigma$ with $\sigma\geq t_{0}$, where the $\R_{+}$-valued continuous function $c_{2}$ on the set of all $(t_{1},t)\in [0,T]\times [0,T]$ with $t_{1}\leq t$ is given by
\begin{equation*}
c_{2}(t_{1},t) := \int_{t_{1}}^{t}\big(k_{\theta}(s) + \lambda_{\theta}(s)\big)\lambda_{\theta}(s)\,ds + 2\bigg(\int_{t_{1}}^{t}\lambda_{\theta}(s)^{2}\,ds\bigg)^{\frac{1}{2}}.
\end{equation*}
\end{Lemma}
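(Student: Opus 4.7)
The plan is to subtract the two SDEs, take the stopped supremum, and estimate the drift and diffusion pieces separately using the H\oe lder and sublinear growth consequences of~\eqref{con:v.2}. Writing the difference
\begin{equation*}
X_{s}^{\sigma} - \tilde{X}_{s}^{\sigma} = (X_{t_{0}} - \tilde{X}_{t_{0}}) - \tfrac{1}{2}\!\int_{t_{0}}^{s\wedge\sigma}\!\big(\theta(r,V_{r})^{2} - \theta(r,\tilde{V}_{r})^{2}\big)\,dr + \int_{t_{0}}^{s\wedge\sigma}\!\big(\theta(r,V_{r}) - \theta(r,\tilde{V}_{r})\big)\,d\hat{W}_{r},
\end{equation*}
I would bound $\sup_{s\in[t_{0},t]}|X_{s}^{\sigma} - \tilde{X}_{s}^{\sigma}|$ by the initial gap plus the absolute drift integral plus the supremum of the stochastic integral, and then take expectations.

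For the stochastic integral, apply Burkholder-Davis-Gundy (with the standard constant $2$), then the H\oe lder bound from~\eqref{con:v.2} giving $|\theta(r,V_{r}) - \theta(r,\tilde{V}_{r})|^{2} \leq \lambda_{\theta}(r)^{2}|V_{r}-\tilde{V}_{r}|$, then Jensen's inequality for the concave square-root, Fubini-Tonelli, and pull out the supremum:
\begin{equation*}
2\,E\!\left[\Big(\!\int_{t_{0}}^{t\wedge\sigma}\!\lambda_{\theta}(r)^{2}|V_{r}-\tilde{V}_{r}|\,dr\Big)^{1/2}\right] \leq 2\Big(\!\int_{t_{0}}^{t}\!\lambda_{\theta}(r)^{2}\,dr\Big)^{1/2}\sup_{s\in[t_{0},t]}E[|V_{s}^{\sigma}-\tilde{V}_{s}^{\sigma}|]^{1/2},
\end{equation*}
which already supplies the $2(\int_{t_{0}}^{t}\lambda_{\theta}^{2})^{1/2}$ term in $c_{2}(t_{0},t)$ (multiplied by $(1+E[V_{s}^{\sigma}]+E[\tilde V_{s}^{\sigma}])^{1/2}\geq 1$).

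The drift term is the main obstacle and requires factoring the square difference. Using $|\theta(r,V_{r})^{2} - \theta(r,\tilde{V}_{r})^{2}| \leq |\theta(r,V_{r})+\theta(r,\tilde{V}_{r})|\,|\theta(r,V_{r})-\theta(r,\tilde{V}_{r})|$, combined with the sublinear growth $|\theta(\cdot,v)|\leq k_{\theta}+\lambda_{\theta}|v|^{1/2}$ and the H\oe lder bound, gives
\begin{equation*}
|\theta(r,V_{r})^{2} - \theta(r,\tilde{V}_{r})^{2}| \leq \lambda_{\theta}(r)\big(2k_{\theta}(r) + \lambda_{\theta}(r)(V_{r}^{1/2} + \tilde{V}_{r}^{1/2})\big)|V_{r}-\tilde{V}_{r}|^{1/2}.
\end{equation*}
I would then use the crude bound $2k_{\theta}(r) + \lambda_{\theta}(r)(V_{r}^{1/2}+\tilde V_{r}^{1/2}) \leq 2(k_{\theta}(r)+\lambda_{\theta}(r))(1+V_{r}+\tilde V_{r})^{1/2}$, which absorbs the constant term $2k_{\theta}(r)$ into the common square-root factor, and apply the Cauchy-Schwarz inequality under $E$ to split $(1+V_{r}^{\sigma}+\tilde V_{r}^{\sigma})^{1/2}|V_{r}^{\sigma}-\tilde V_{r}^{\sigma}|^{1/2}$. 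After integrating in $r$ and pulling out the supremum over $s\in[t_{0},t]$, the prefactor is exactly $\int_{t_{0}}^{t}(k_{\theta}+\lambda_{\theta})\lambda_{\theta}$, matching the first summand of $c_{2}(t_{0},t)$. Adding the two estimates and the initial term yields~\eqref{eq:loc-price comparison estimate}. The only delicate point is that the sublinear growth constant $k_{\theta}$ was fixed as $|\theta(\cdot,v_{0})|+\lambda_{\theta}|v_{0}|^{1/2}$ in the discussion preceding the lemma; since this $k_{\theta}$ lies in $\mathscr{L}^{2}(\R_{+})$, all integrals above are finite and $c_{2}$ is well defined and continuous.
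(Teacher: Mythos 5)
Your proof is correct and follows essentially the same route as the paper's: it estimates the stochastic integral via Burkholder--Davis--Gundy with constant $2$, Jensen and Fubini, and the drift term by factoring $\theta^{2}-\tilde\theta^{2}$, applying the sublinear-growth and $1/2$-H\oe lder consequences of~\eqref{con:v.2}, and then Cauchy--Schwarz. Your intermediate bound $2k_{\theta}+\lambda_{\theta}(v^{1/2}+\tilde v^{1/2})\leq 2(k_{\theta}+\lambda_{\theta})(1+v+\tilde v)^{1/2}$ is precisely the step that the paper compresses under the single label ``Cauchy--Schwarz,'' and it is the correct way to arrive at the constant $\int_{t_{0}}^{t}(k_{\theta}+\lambda_{\theta})\lambda_{\theta}$ appearing in $c_{2}$.
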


Let us now settle the question of pathwise uniqueness for the second SDE in~\eqref{eq:stochastic volatility model} by referring to a comparison estimate, under a one-sided Lipschitz continuity condition on the drift $\zeta$ and an Osgood continuity condition on compact sets on the diffusion $\eta$:
\begin{enumerate}[label=(V.\arabic*), ref=V.\arabic*, leftmargin=\widthof{(V.4)} + \labelsep]
\setcounter{enumi}{2}
\item\label{con:v.3} There is $\lambda_{\zeta}\in\mathscr{L}^{1}(\R)$ with $\mathrm{sgn}(v-\tilde{v})(\zeta(\cdot,v) - \zeta(\cdot,\tilde{v})) \leq \lambda_{\zeta}|v-\tilde{v}|$ for all $v,\tilde{v}\in\R$ a.e.
\item\label{con:v.4} For each $n\in\N$ there are $\lambda_{\eta,n}\in\mathscr{L}^{2}(\R_{+})$ and an increasing continuous function $\rho_{n}:\R_{+}\rightarrow\R_{+}$ that is positive on $]0,\infty[$ such that
\begin{equation*}
|\eta(\cdot,v) - \eta(\cdot,\tilde{v})| \leq \lambda_{\eta,n}\rho_{n}(|v-\tilde{v}|)
\end{equation*}
for any $v,\tilde{v}\in [-n,n]$ a.e.~and $\int_{0}^{1}\rho_{n}(v)^{-2}\,dv = \infty$.
\end{enumerate}

\begin{Remark}
The bound in~\eqref{con:v.3} is valid if and only if $(\zeta(\cdot,v) - \zeta(\cdot,\tilde{v}))/(v-\tilde{v}) \leq \lambda_{\zeta}$ for all $v,\tilde{v}\in\R$ with $v\neq\tilde{v}$ a.e. For instance, this holds if $\zeta(s,\cdot)$ is locally absolutely continuous and its weak derivative $\partial_{v}\zeta(s,\cdot)$ is bounded from above by $\lambda_{\zeta}(s)$ for a.e.~$s\in [0,T]$.
\end{Remark}

Under~\eqref{con:v.3} and~\eqref{con:v.4}, an application of Corollary~3.9, combined with Remark~3.10, and Proposition~3.13 in~\cite{KalMeyPro21} shows that there is pathwise uniqueness for the second SDE in~\eqref{eq:stochastic volatility model} and any two solutions $V$ and $\tilde{V}$ satisfy
\begin{equation}\label{eq:variance comparison estimate}
E\big[|V_{t} - \tilde{V}_{t}|\big] \leq e^{\int_{t_{0}}^{t}\lambda_{\zeta}(s)\,ds}E\big[|V_{t_{0}} - \tilde{V}_{t_{0}}|\big]
\end{equation}
for each $t\in [t_{0},T]$. Regarding strong existence, let us additionally require two conditions involving the growth and continuity of the drift and diffusion coefficients $\zeta$ and $\eta$:
\begin{enumerate}[label=(V.\arabic*), ref=V.\arabic*, leftmargin=\widthof{(V.6)} + \labelsep]
\setcounter{enumi}{4}
\item\label{con:v.5} There are $k_{\zeta},l_{\zeta}\in\mathscr{L}^{1}(\R)$ such that $k_{\zeta}\geq 0$ and $\mathrm{sgn}(v)\zeta(\cdot,v) \leq k_{\zeta} + l_{\zeta}|v|$ for any $v\in\R$ a.e.~and $\eta(\cdot,0) = 0$.
\item\label{con:v.6} $\zeta(t,\cdot)$ and $\eta(t,\cdot)$ are continuous for a.e.~$t\in [0,T]$. Further, for any $n\in\N$ there is $c_{\zeta,\eta,n}\in\R_{+}$ such that $|\zeta(\cdot,v)|\vee |\eta(\cdot,v)| \leq c_{\zeta,\eta,n}$ for each $v\in [-n,n]$ a.e.
\end{enumerate}

Then Theorem 3.27 in~\cite{KalMeyPro21} asserts that for any $\mathscr{F}_{t_{0}}$-measurable integrable random variable $\xi$ the second SDE in~\eqref{eq:stochastic volatility model} admits a unique strong solution $V$ such that $V_{t_{0}} = \xi$ a.s.~and
\begin{equation}\label{eq:variance growth estimate}
E\big[|V_{t}|\big] \leq e^{\int_{t_{0}}^{t}l_{\zeta}(s)\,ds}E\big[|\xi|\big] + \int_{t_{0}}^{t}e^{\int_{s}^{t}l_{\zeta}(\tilde{s})\,d\tilde{s}}k_{\zeta}(s)\,ds
\end{equation}
for all $t\in [t_{0},T]$. Now we give sufficient conditions for any solution to have a.s.~positive paths. To this end, we generalise Theorem 2.2 in Mishura and Posashkova~\cite{MisPos08}. There, it is in particular required that 
\begin{equation*}
\inf_{(t,x)\in [t_{0},T]\times [\delta,\infty[} \eta(t,x) > 0\quad\text{for any $\delta > 0$.}
\end{equation*}
A positivity condition on the diffusion coefficient $\eta$, which we omit. Instead, the weakened regularity condition that we impose reads as follows:
\begin{enumerate}[label=(V.\arabic*), ref=V.\arabic*, leftmargin=\widthof{(V.7)} + \labelsep]
\setcounter{enumi}{6}
\item\label{con:v.7} There are $\varepsilon > 0$ and $c_{0},c_{\zeta}\in\mathscr{L}^{1}(\mathbb{R}_{+})$ as well as increasing functions $\varphi_{0}:]0,\varepsilon]\rightarrow\mathbb{R}_{+}$ and $\varphi_{\zeta}:[\varepsilon,\infty[\rightarrow ]0,\infty[$ such that $\varphi_{\zeta}$ is continuous and
\begin{equation}\label{eq:positivity condition}
\frac{\eta(\cdot,v)^{2}}{2v^{2}} \leq \frac{\zeta(\cdot,v)}{v} + c_{0}\varphi_{0}(v)\quad\text{and}\quad \zeta(\cdot,\tilde{v})\geq - c_{\zeta}\varphi_{\zeta}(\tilde{v})
\end{equation}
for each $v\in ]0,\varepsilon[$ and any $\tilde{v}\geq \varepsilon$ a.e.
\end{enumerate}

Regardless of whether uniqueness in law holds for the underlying equation, under this condition any solution starting at a positive deterministic value remains positive.

\begin{Proposition}\label{pr:positivity of the variance process}
Let~\eqref{con:v.7} be valid. Then any solution $V$ to the second SDE in~\eqref{eq:stochastic volatility model} such that $V_{t_{0}} = v_{0}$ a.s.~for some $v_{0} > 0$~satisfies $V_{t} > 0$ for any $t\in [t_{0},T]$ a.s.
\end{Proposition}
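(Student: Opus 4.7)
My strategy is a localisation argument combined with It\^o's formula applied to $\log V$, exploiting the two inequalities in~\eqref{eq:positivity condition} to bound the drift of $\log V$ from below.

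Fix $n\in\N$ with $1/n<v_{0}\wedge\varepsilon$ and set $\sigma_{n}:=\inf\{t\in [t_{0},T]:V_{t}\leq 1/n\}$, with the convention $\inf\emptyset:=T+1$. By path continuity, the events $\{\sigma_{n}>T\}$ are increasing in $n$ and their union coincides, up to a null set, with $\{V_{t}>0\text{ for all }t\in[t_{0},T]\}$, so the claim reduces to $P(\sigma_{n}\leq T)\to 0$ as $n\to\infty$. To control the upper tail of $V$ I also introduce $\rho_{M}:=\inf\{t\in[t_{0},T]:V_{t}\geq M\}$ and a localising sequence $\tau_{k}$ for the stochastic integral appearing below; both tend to $T+1$ a.s.~as $M,k\to\infty$, because $V$ has continuous paths and $\int_{t_{0}}^{T}\eta(s,V_{s})^{2}\,ds<\infty$ a.s.

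On the stopped interval $[t_{0},\sigma_{n}\wedge\rho_{M}\wedge\tau_{k}]$ the process $V$ is confined to $[1/n,M]$, so It\^o's formula applied to $\log V$ yields
\[
\log V_{T\wedge\sigma_{n}\wedge\rho_{M}\wedge\tau_{k}}-\log v_{0} \;=\; \int_{t_{0}}^{T\wedge\sigma_{n}\wedge\rho_{M}\wedge\tau_{k}}\!\left(\frac{\zeta(s,V_{s})}{V_{s}}-\frac{\eta(s,V_{s})^{2}}{2V_{s}^{2}}\right)ds \;+\; N_{T\wedge\sigma_{n}\wedge\rho_{M}\wedge\tau_{k}},
\]
where $N_{t}:=\int_{t_{0}}^{t}\eta(s,V_{s})/V_{s}\,d\tilde{W}_{s}$ is, after stopping at $\tau_{k}$, a true martingale. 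Taking expectations and splitting the drift integral according to $\{V_{s}<\varepsilon\}$ and $\{V_{s}\geq\varepsilon\}$: on the former set, the first inequality in~\eqref{eq:positivity condition} together with the monotonicity of $\varphi_{0}$ gives
\[
\frac{\zeta(s,V_{s})}{V_{s}}-\frac{\eta(s,V_{s})^{2}}{2V_{s}^{2}} \;\geq\; -c_{0}(s)\varphi_{0}(V_{s}) \;\geq\; -c_{0}(s)\varphi_{0}(\varepsilon),
\]
which is uniformly integrable in $s$ since $c_{0}\in\mathscr{L}^{1}(\R_{+})$; on the latter set, the second inequality combined with $\varepsilon\leq V_{s}\leq M$ and the monotonicity of $\varphi_{\zeta}$ gives $\zeta(s,V_{s})/V_{s}\geq -c_{\zeta}(s)\varphi_{\zeta}(M)/\varepsilon$, while the piece $-\eta(s,V_{s})^{2}/(2V_{s}^{2})\mathbbm{1}_{V_{s}\geq\varepsilon}$ is dominated pathwise by $\eta(s,V_{s})^{2}/(2\varepsilon^{2})$, whose time-integral is a.s.~finite.

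Passing to the limits $k\to\infty$ and then $M\to\infty$ by monotone/dominated convergence, together with $P(\rho_{M}\leq T)\to 0$, I expect to obtain a uniform-in-$n$ lower bound $E[\log V_{T\wedge\sigma_{n}}]\geq \log v_{0}-C$ for some constant $C\geq 0$. On the other hand, by continuity $V_{\sigma_{n}}=1/n$ on $\{\sigma_{n}\leq T\}$, so
\[
E[\log V_{T\wedge\sigma_{n}}] \;=\; E[\log V_{T};\sigma_{n}>T]\;-\;(\log n)\,P(\sigma_{n}\leq T);
\]
if $P(\sigma_{n}\leq T)$ were bounded below by some $\delta>0$ uniformly in $n$, the right-hand side would diverge to $-\infty$ as $n\to\infty$, contradicting the lower bound. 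Hence $P(\sigma_{n}\leq T)\to 0$ and $V$ stays strictly positive on $[t_{0},T]$. The main technical obstacle will be the careful treatment of the $-\eta^{2}/(2V_{s}^{2})\mathbbm{1}_{V_{s}\geq\varepsilon}$ contribution to the drift, for which~\eqref{eq:positivity condition} supplies no direct bound; pulling this through the joint limit in $k$, $M$ and $n$ is precisely where the monotonicity of $\varphi_{0}$ and $\varphi_{\zeta}$ and the pathwise a.s.~finiteness of $\int_{t_{0}}^{T}\eta(s,V_{s})^{2}\,ds$ become essential.
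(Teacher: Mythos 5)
Your strategy---It\^o on a function of $V$ with a double localisation---is the right idea, but the choice $\varphi=\log$ on all of $]0,\infty[$ runs into a genuine wall, which you already flag as the ``main technical obstacle'' without resolving it. Condition~\eqref{con:v.7} only controls the ratio $\eta(\cdot,v)^{2}/(2v^{2})$ for $v<\varepsilon$; for $v\geq\varepsilon$ it says nothing whatsoever about $\eta$, only about~$\zeta$. Since $\log$ is concave, the It\^o drift of $\log V$ contains the term $-\tfrac{1}{2}\eta(s,V_{s})^{2}/V_{s}^{2}$, which on $\{V_{s}\geq\varepsilon\}$ is negative and completely unbounded under the hypotheses. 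Your observation that $\int_{t_{0}}^{T}\eta(s,V_{s})^{2}\,ds<\infty$ a.s.~is true pathwise, but a.s.\ finiteness of a non-positive integrand does not yield a finite \emph{expectation}: nothing in~\eqref{con:v.7} rules out $E\big[\int_{t_{0}}^{T}\eta(s,V_{s})^{2}/V_{s}^{2}\,\mathbbm{1}_{\{V_{s}\geq\varepsilon\}}\,ds\big]=\infty$. As a result, after you let $k\to\infty$ and $M\to\infty$, the claimed uniform constant $C$ in $E[\log V_{T\wedge\sigma_{n}}]\geq\log v_{0}-C$ is not obtainable, and the contradiction argument collapses.

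The paper circumvents precisely this by not using $\log$ globally: Lemma~\ref{le:positivity of the variance process} builds a $C^{1}$ function $\varphi$ that equals $\log$ on $]0,\varepsilon[$ (to force $\varphi(x)\to-\infty$ as $x\downarrow0$) but is \emph{convex} with $\varphi''>0$ on $]\varepsilon,\infty[$. There the diffusion contribution $\tfrac{1}{2}\eta^{2}\varphi''$ is $\geq0$ and can simply be dropped, leaving only the drift term $\zeta\varphi'\geq -c_{\zeta}\varphi_{\zeta}\varphi'$, which the second inequality in~\eqref{eq:positivity condition} does control. Note also the paper's different ordering of limits: one fixes the \emph{upper} truncation level $n$, sends the \emph{lower} cutoff $m\to\infty$ to deduce $P(\sigma\leq\overline{\sigma}_{n}\wedge T)=0$, and only then lets $n\to\infty$ using $\sup_{n}\overline{\sigma}_{n}=\infty$ from path continuity. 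That ordering is essential, because the lower bound on the drift depends on $n$ (through $\varphi_{\zeta}(nv_{0})\varphi'(nv_{0})$) and need not be uniform in the upper level. To repair your argument you would need to (a) replace $\log$ by a test function that is convex above $\varepsilon$ and (b) localise so that the upper cutoff is held fixed while the lower cutoff is sent to zero.
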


\begin{Example}\label{ex:processes with positive paths}
For $n\in\N$ let $k,l_{1},\dots,l_{n}\in\mathscr{L}^{1}(\mathbb{R})$ and $\varphi_{1},\dots,\varphi_{n}$ be real-valued Borel measurable functions on $]0,\infty[$ such that $k\geq 0$,
\begin{equation*}
\zeta(t,v) = k(t) + l_{1}(t)\varphi_{1}(v) + \cdots + l_{n}(t)\varphi_{n}(v)\quad\text{and}\quad \limsup_{v\downarrow 0} \frac{|\varphi_{i}(v)|}{v}  < \infty
\end{equation*}
for all $t\in [0,T]$, any $v > 0$ and each $i\in\{1,\dots,n\}$. Suppose that there are $\varepsilon_{0} > 0$, $c_{\eta}\in\mathscr{L}^{2}(\R_{+})$, $\gamma\in [1/2,\infty[$ and an increasing function $\varphi:]0,\infty[\rightarrow\R_{+}$ satisfying
\begin{equation}\label{eq:special positivity condition}
|\eta(t,v)| \leq c_{\eta}(t) v^{\gamma}\big(1 + v\varphi(v)\big)^{\frac{1}{2}}\quad\text{for any $(t,v)\in[0,T]\times ]0,\varepsilon_{0}[$}.
\end{equation}
If $c_{\eta}^{2}/2 \leq k$ for $\gamma = 1/2$ and, less restrictively, $c_{\eta}^{2}\delta \leq k$ for some $\delta > 0$ whenever $\gamma\in ]1/2,1[$, then the first inequality in~\eqref{eq:positivity condition} holds. Indeed, take $\varepsilon\in ]0,\varepsilon_{0}]$ and $c > 0$ such that
\begin{equation*}
|\varphi_{i}(v)|\leq c v,\quad\text{and}\quad v^{2\gamma - 1} \leq 2\delta\quad\text{in case $\gamma\in ]1,2,1[$},
\end{equation*}
for any $i\in\{1,\dots,n\}$ and all $v\in ]0,\varepsilon_{0}]$. Then $c_{0}:= c\sum_{l=1}^{n}|l_{i}| + c_{\eta}^{2}/2$ and $\varphi_{0}:]0,\varepsilon]\rightarrow\R_{+}$ given by $\varphi_{0}(v):=1 + v^{2\gamma - 2}(\mathbbm{1}_{[1,\infty[}(\gamma) + v\varphi(v))$ satisfy
\begin{equation*}
\frac{c_{\eta}(t)^{2}}{2v} v^{2\gamma - 1}\big(1 + v\varphi(v)\big) \leq \frac{\zeta(t,v)}{v} + c_{0}(t)\varphi_{0}(v)
\end{equation*}
for all $(t,v)\in [0,T]\times ]0,\varepsilon[$. In particular, we may take $\alpha\in [1,\infty[^{n}$, $\beta\in [1/2,\infty[^{n}$ and $\lambda_{1},\dots,\lambda_{n}\in\mathscr{L}^{2}(\R)$ such that $\varphi_{1}(v) = v^{\alpha_{1}},\dots,\varphi_{n}(v) = v^{\alpha_{n}}$ and
\begin{equation}\label{eq:sums of power functions}
\eta(\cdot,v) = \lambda_{1}v^{\beta_{1}} + \cdots + \lambda_{n}v^{\beta_{n}}\quad\text{for all $v > 0$}.
\end{equation}
In this case, the estimate~\eqref{eq:special positivity condition} holds for the choice $c_{\eta} = \sum_{i=1}^{n}|\lambda_{i}|$, $\gamma=\min_{i\in\{1,\dots,n\}}\beta_{i}$ and $\varphi = 0$ as soon as $\varepsilon_{0} < 1$.
\end{Example}

Due to the integrability condition $\int_{0}^{T}(1-\rho(s)^{2})^{-1}\,ds < \infty$, there is another standard $(\mathscr{F}_{t})_{t\in [0,T]}$-Brownian motion $W$ that is independent of $\tilde{W}$ such that
\begin{equation*}
\hat{W}_{t} = \int_{0}^{t}\sqrt{1 - \rho(s)^{2}}\,dW_{s} + \int_{0}^{t}\rho(s)\,d\tilde{W}_{s}\quad\text{for all $t\in [0,T]$ a.s.}
\end{equation*}
By using this representation, a simple transformation shows that we can rearrange~\eqref{eq:stochastic volatility model} into the two-dimensional SDE
\begin{equation}\label{eq:transformed stochastic volatility model}
d\begin{pmatrix}
X_{t} \\
V_{t}
\end{pmatrix}
 = \begin{pmatrix}
b(t) - \frac{1}{2}\theta(t,V_{t})^{2} \\
\zeta(t,V_{t})
\end{pmatrix}\,dt + \begin{pmatrix}
\theta(t,V_{t})\sqrt{1-\rho(t)^{2}} & \theta(t,V_{t})\rho(t) \\
0 & \eta(t,V_{t})
\end{pmatrix}\,d
\begin{pmatrix}
W_{t} \\
\tilde{W}_{t}
\end{pmatrix}
\end{equation}
for $t\in [t_{0},T]$. Then the pair of two adapted continuous processes $X$ and $V$ is a solution to this SDE if and only if $(\exp(X),V)$ solves the initial one.

Further,~\eqref{eq:transformed stochastic volatility model} induces a linear differential operator $\mathscr{L}_{b,\zeta}$ on $C^{1,2}([0,T[\times\R\times ]0,\infty[)$ with values in the linear space of all real-valued measurable functions by
\begin{equation}\label{eq:differential operator}
\begin{split}
&\mathscr{L}_{b,\zeta}(\varphi)(t,x,v) := \bigg(b(t) - \frac{1}{2}\theta(t,v)^{2}\bigg)\frac{\partial\varphi}{\partial x}(t,x,v) + \zeta(t,v)\frac{\partial\varphi}{\partial v}(t,x,v)\\
&\quad + \frac{1}{2}\theta(t,v)^{2}\frac{\partial^{2}\varphi}{\partial x^{2}}(t,x,v) + \theta(t,v)\eta(t,v)\rho(t)\frac{\partial^{2}\varphi}{\partial x\partial v}(t,x,v) + \frac{1}{2}\eta(t,v)^{2}\frac{\partial^{2}\varphi}{\partial v^{2}}(t,x,v).
\end{split}
\end{equation}
This formula is obtained by multiplying the diffusion coefficient with its transpose, as we readily recall, and for every solution $(X,V)$ to~\eqref{eq:transformed stochastic volatility model} It{\^o}'s formula entails that the process $[t_{0},T]\times\Omega\rightarrow\R$,
\begin{equation*}
(t,\omega)\mapsto\varphi(t,X_{t},V_{t})(\omega) - \int_{t_{0}}^{t}\bigg(\frac{\partial}{\partial s} + \mathscr{L}_{b,\zeta}\bigg)(\varphi)(s,X_{s},V_{s})(\omega)\,ds
\end{equation*}
is a martingale for any $\varphi\in C_{0}^{1,2,2}([0,T]\times\R\times ]0,\infty[)$. We will now show that~\eqref{eq:transformed stochastic volatility model} yields a time-inhomogeneous Markov process with the right-hand Feller property in the sense of~\cite{Kal20}[Section 2.3], which will allow us to apply the results on mild solutions therein.

In fact, what we get is a continuous strong Markov process, or in short, a \emph{diffusion}, and we will realise it on the canonical space $\hat{\Omega}$ of all $\R\times ]0,\infty[$-valued continuous paths on $[0,T]$, endowed with its Borel $\sigma$-field $\hat{\mathscr{F}}$. Let $\hat{X}:[0,T]\times\hat{\Omega}\rightarrow\R$ and $\hat{V}:[0,T]\times\hat{\Omega}\rightarrow ]0,\infty[$ be given by
\begin{equation*}
\hat{X}_{t}(\hat{\omega}) := \hat{\omega}_{1}(t)\quad\text{and}\quad \hat{V}_{t}(\hat{\omega}) := \hat{\omega}_{2}(t).
\end{equation*}
Then $(\hat{X},\hat{V})$ serves as canonical process, its natural filtration $(\hat{\mathscr{F}}_{t})_{t\in [0,T]}$ satisfies $\hat{\mathscr{F}} = \hat{\mathscr{F}}_{T}$ and the law of any two processes $X:[0,T]\times\Omega\rightarrow\R$ and $V:[0,T]\times\Omega\rightarrow ]0,\infty[$ is of the form $P((X,V)\in \hat{B}) = P\circ (X,V)^{-1}((\hat{X},\hat{V})\in\hat{B})$ for all $\hat{B}\in\hat{\mathscr{F}}$.

\begin{Proposition}\label{pr:transformed SDE}
Under~\eqref{con:v.1}-\eqref{con:v.7}, the following four assertions hold:
\begin{enumerate}[(i)]
\item We have pathwise uniqueness for the SDE~\eqref{eq:transformed stochastic volatility model}.
\item For any $(x_{0},v_{0})\in\R\times ]0,\infty[$ there is a unique strong solution $(X^{t_{0},x_{0},v_{0}},V^{t_{0},v_{0}})$ to~\eqref{eq:transformed stochastic volatility model} such that $(X_{t_{0}}^{t_{0},x_{0},v_{0}},V_{t_{0}}^{t_{0},v_{0}}) = (x_{0},v_{0})$ a.s.~and $V^{t_{0},v_{0}} > 0$.
\item The map $[0,t]\times\R\times ]0,\infty[\rightarrow\R$, $(s,x,v)\mapsto E[\varphi(s,X_{t}^{s,x,v},V_{t}^{s,v})]$ is right-continuous for any $t\in ]0,T]$ and each $\varphi\in C_{b}([0,T]\times\R\times ]0,\infty[)$.
\item  Let $P_{s,x,v}$ be the law of the process $[0,T]\times\Omega\rightarrow\R\times ]0,\infty[$, $(t,\omega)\mapsto (X_{t\vee s}^{s,x,v},V_{t\vee s}^{s,v})(\omega)$ for any $(s,x,v)\in [0,T]\times\R\times ]0,\infty[$ and denote the set of all these measures by $\mathbb{P}$. Then $((\hat{X},\hat{V}),(\hat{\mathscr{F}}_{t})_{t\in [0,T]},\mathbb{P})$ is a diffusion that is right-hand Feller.
\end{enumerate}
\end{Proposition}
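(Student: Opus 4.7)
The plan is to proceed in four steps mirroring assertions (i)--(iv), exploiting the triangular structure of~\eqref{eq:transformed stochastic volatility model}: the $V$-component is autonomous, and once $V$ is fixed, the $X$-component is determined by an explicit stochastic exponential via Lemma~\ref{le:price process representation}.

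For (i), given two solutions $(X,V)$ and $(\tilde X,\tilde V)$ agreeing at time $t_{0}$, both $V,\tilde V$ solve the second SDE in~\eqref{eq:stochastic volatility model}, so conditions~\eqref{con:v.3}--\eqref{con:v.4} together with Corollary~3.9 and Proposition~3.13 of~\cite{KalMeyPro21} (as already invoked to derive~\eqref{eq:variance comparison estimate}) give $V=\tilde V$ up to indistinguishability. Then $X$ and $\tilde X$ satisfy the same SDE~\eqref{eq:log-price dynamics} whose coefficients do not depend on $X$, so they are forced to agree by direct integration (equivalently, apply Lemma~\ref{le:price process representation} to $\exp(X)$ and $\exp(\tilde X)$). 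For (ii), conditions~\eqref{con:v.3}--\eqref{con:v.6} and Theorem~3.27 of~\cite{KalMeyPro21} furnish a unique strong solution $V^{t_{0},v_{0}}$ of the second SDE; Proposition~\ref{pr:positivity of the variance process} under~\eqref{con:v.7} then gives $V^{t_{0},v_{0}}>0$ a.s.; finally, writing $\hat W_{t}=\int_{0}^{t}\sqrt{1-\rho(s)^{2}}\,dW_{s}+\int_{0}^{t}\rho(s)\,d\tilde W_{s}$ and setting
\begin{equation*}
X_{t}^{t_{0},x_{0},v_{0}}:=x_{0}+\int_{t_{0}}^{t}\bigl(b(s)-\tfrac{1}{2}\theta(s,V_{s}^{t_{0},v_{0}})^{2}\bigr)\,ds+\int_{t_{0}}^{t}\theta(s,V_{s}^{t_{0},v_{0}})\,d\hat W_{s}
\end{equation*}
defines the required strong solution, all integrals being well-posed thanks to the sublinear growth bound $|\theta(\cdot,v)|\leq k_{\theta}+\lambda_{\theta}|v|^{1/2}$ implied by~\eqref{con:v.2}.

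Assertion (iii) is the main obstacle and will consume most of the work. Fix $t\in\,]0,T]$, $\varphi\in C_{b}$, and $(s_{0},x,v)\in [0,t]\times\R\times\,]0,\infty[$. Continuity in $(x,v)$ at fixed starting time $s_{0}$ follows from the stability estimates already at hand: the comparison~\eqref{eq:variance comparison estimate} gives $L^{1}$-continuity $V_{t}^{s_{0},w}\to V_{t}^{s_{0},v}$ as $w\to v$, and combining this with~\eqref{eq:loc-price comparison estimate} applied at $\sigma\equiv t$ yields $L^{1}$-continuity of $X_{t}^{s_{0},y,w}$ in $(y,w)$, hence convergence in distribution and, by bounded continuity of $\varphi$, convergence of $E[\varphi(s_{0},X_{t}^{s_{0},\cdot,\cdot},V_{t}^{s_{0},\cdot})]$. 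For right-continuity in $s$, I would use the Markov property inherited from pathwise uniqueness (step~(i)): for $s\in [s_{0},t]$ the restrictions satisfy
\begin{equation*}
\bigl(X_{t}^{s_{0},x,v},V_{t}^{s_{0},v}\bigr)\stackrel{d}{=}\bigl(X_{t}^{s,X_{s}^{s_{0},x,v},V_{s}^{s_{0},v}},V_{t}^{s,V_{s}^{s_{0},v}}\bigr).
\end{equation*}
Since $(X_{s}^{s_{0},x,v},V_{s}^{s_{0},v})\to (x,v)$ a.s.~as $s\downarrow s_{0}$ by path continuity, the already-established continuity of $(y,w)\mapsto E[\psi(X_{t}^{s,y,w},V_{t}^{s,w})]$ combined with dominated convergence forces $E[\varphi(s,X_{t}^{s_{0},x,v},V_{t}^{s_{0},v})]-E[\varphi(s,X_{t}^{s,x,v},V_{t}^{s,v})]\to 0$, and the right-continuity of $\varphi$ in its first variable closes the gap between this and $E[\varphi(s_{0},X_{t}^{s_{0},x,v},V_{t}^{s_{0},v})]$. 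The delicate point is uniformity over small perturbations of $(x,v)$ as required by Definition~2.1 of~\cite{Kal20}: one has to combine the two approximations by a triangle argument where the modulus of continuity in initial data is independent of $s$ in a right-neighbourhood of $s_{0}$, which is why the estimates~\eqref{eq:variance comparison estimate} and~\eqref{eq:loc-price comparison estimate}, with their explicit $s$-continuous constants $c_{1},c_{2}$, are the right tool.

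Finally, assertion (iv) is a synthesis: continuity of paths is built in, the Markov (in fact strong Markov) property of the family $(P_{s,x,v})$ is the standard consequence of pathwise uniqueness~(i) and uniqueness in law (which follows from~(i) by Yamada--Watanabe), and the right-hand Feller property is precisely the content of~(iii). Therefore $((\hat X,\hat V),(\hat{\mathscr{F}}_{t})_{t\in [0,T]},\mathbb{P})$ satisfies the definition of a right-hand Feller diffusion from~\cite{Kal20}[Section~2.3], as required.
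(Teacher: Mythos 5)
Your proof is correct and, in parts (i), (ii) and (iv), tracks the paper's argument essentially line by line: (i) exploits that the $V$-equation is autonomous and the drift/diffusion of~\eqref{eq:transformed stochastic volatility model} do not depend on the first spatial coordinate, (ii) combines Theorem~3.27 of~\cite{KalMeyPro21} with Proposition~\ref{pr:positivity of the variance process} and an explicit formula for $X$, and (iv) reduces the Markov property to pathwise uniqueness (the paper invokes Lemma~3.5 of~\cite{Kal17} and the proof scheme of Theorem~5.1.5 in~\cite{StrVar72}, and then Lemma~3.14 of~\cite{Kal17} for strong Markov; your appeal to Yamada--Watanabe is the same idea, just less explicitly sourced).

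Where you differ is the organisation of (iii). The paper verifies the joint right-continuity directly through the sequential criterion: for any sequence $(s_n,x_n,v_n)\to(s,x,v)$ with $s\leq s_n$, it shows $(X_t^{s_n,x_n,v_n},V_t^{s_n,v_n})\to(X_t^{s,x,v},V_t^{s,v})$ in probability, then invokes Theorem~21.4 and Lemma~21.9 of~\cite{Bau00} to convert this into convergence of the expectations; the stability in probability itself rests on Lemma~\ref{le:log-price comparison estimate} and Theorem~4.6 of~\cite{KalMeyPro21}, the latter being precisely what handles the random initial datum $V^{s,v}_{s_n}$ when the start time shifts. You instead split the increment by a triangle inequality: spatial continuity at fixed time via~\eqref{eq:variance comparison estimate} and~\eqref{eq:loc-price comparison estimate}, then temporal right-continuity by conditioning on $\hat{\mathscr{F}}_s$ and using the flow property plus path continuity. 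This works, but you correctly flag the delicate point yourself: Definition~2.1 of~\cite{Kal20} is a joint statement, so the spatial modulus must be uniform over $s$ in a right-neighbourhood of $s_0$, and the uniformity has to be extracted from the explicit $s$-dependence of $c_1,c_2$ and from a uniform moment bound via~\eqref{eq:variance growth estimate}. The paper's sequential version sidesteps this bookkeeping by absorbing the random time-shifted initial condition into the comparison estimate of Theorem~4.6 in~\cite{KalMeyPro21} once and for all, which is why it reads cleaner; your route also avoids any circularity since the flow property needed already follows from (i), before (iv) is established. Net effect: same ingredients, slightly different assembly, and if you were to write (iii) out in full you would need to make the uniformity argument you gesture at precise.
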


\begin{Example}\label{ex:stochastic volatility model}
Let $\hat{\theta}_{0},\hat{\theta}\in\mathscr{L}^{2}(\R)$, $n\in\N$, $\alpha\in [1,\infty[^{n}$ and $\beta\in [1/2,\infty[^{n}$. Assume that the functions $k,l_{0}:[0,T]\rightarrow\R$ and the maps $l,\lambda:[0,T]\rightarrow\R^{n}$ are measurable and bounded such that $\theta(\cdot,v) = \hat{\theta}_{0} + \hat{\theta}\sqrt{v}$,
\begin{equation*}
\zeta(\cdot,v) = k - l_{0} v + l_{1}v^{\alpha_{1}} + \cdots + l_{n}v^{\alpha_{n}}\quad\text{and}\quad \eta(\cdot,v) = \lambda_{1} v^{\beta_{1}} + \cdots + \lambda_{n}v^{\beta_{n}}
\end{equation*}
for any $v > 0$. Further, let $k\geq 0$ and $l_{i}\leq 0$ for all $i\in\{1,\dots,n\}$ and we require that for $\gamma:=\min\{\beta_{1},\dots,\beta_{n}\}$ it holds that $(\sum_{i=1}^{n}|\lambda_{i}|)^{2}/2\leq k$, if $\gamma=1/2$, and
\begin{equation*}
\bigg(\sum_{i=1}^{n}|\lambda_{i}|\bigg)^{2}\delta \leq k\quad\text{for some $\delta > 0$, if $\gamma\in ]1/2,1[$.}
\end{equation*}
By imposing a radial representation of $\theta$ and $\eta$ on $[0,T]\times\R$ and setting $\zeta(\cdot,v) = \zeta(\cdot,0)$ for all $v < 0$, the SDE~\eqref{eq:stochastic volatility model} reduces to
\begin{equation}\label{eq:specific model}
\begin{split}
dS_{t} &= b(t)S_{t}\,dt + \big(\hat{\theta}_{0}(t) + \hat{\theta}(t)|V_{t}|^{\frac{1}{2}}\big)S_{t}\,d\hat{W}_{t},\\
dV_{t} &= \bigg(k(t) - l_{0}(t)V_{t} + \sum_{i=1}^{n} l_{i}(t)(V_{t}^{+})^{\alpha_{i}}\bigg)\,dt + \sum_{i=1}^{n}\lambda_{i}(t)|V_{t}|^{\beta_{i}}\,d\tilde{W}_{t}
\end{split}
\end{equation}
for $t\in [t_{0},T]$ with initial conditions $S_{t_{0}} = s_{0}$ and $V_{t_{0}} = v_{0}$ a.s.~for some $s_{0},v_{0} > 0$. In particular, for $n=1$ and $l=0$ we recover the dynamics in a generalised time-dependent version of the following option pricing models:
\begin{enumerate}[(1)]
\item The \emph{stock price model} by Black and Scholes~\cite{BlaFisSch73} for $\hat{\theta}_{0} = k = l_{0} = \lambda = 0$ and $\hat{\theta} = 1$, which entails that any solution $V$ to the second SDE in~\eqref{eq:specific model} satisfies $V = v_{0}$ a.s., where $\sqrt{v_{0}}$ stands for the \emph{volatility}.
\item The \emph{stochastic volatility model by Heston}~\cite{Hes93} for $\hat{\theta}_{0} = 0$, $l_{0} > 0$ and $\beta=1/2$, in which case $V$ is a \emph{square-root diffusion}. There, $l_{0}$ is the \emph{mean reversion speed}, $k/l_{0}$ is the \emph{mean reversion level} and the same positivity condition $\lambda^{2}\leq 2k$ for $V$ applies.
\item The \emph{Garch diffusion model}~\cite{Lew00} for $\hat{\theta}_{0} = 0$, $l_{0} > 0$ and $\beta=1$. Similarly to the Heston model, $l_{0}$ is the mean reversion speed and $k/l_{0}$ the mean reversion level.
\end{enumerate}
We observe that~\eqref{con:v.1}-\eqref{con:v.7} hold. In fact, because the function $\R\rightarrow\R_{+}$, $v\mapsto (v^{+})^{\gamma}$ is increasing, non-positive on $]-\infty,0[$ and non-negative on $]0,\infty[$ for any $\gamma > 0$, the one-sided conditions~\eqref{con:v.3} and~\eqref{con:v.5} follow. More precisely,
\begin{align*}
\mathrm{sgn}(v-\tilde{v})(\zeta(\cdot,v) - \zeta(\cdot,\tilde{v})) \leq  -l_{0}|v-\tilde{v}|\quad\text{and}\quad\mathrm{sgn}(v)\zeta(\cdot,v) \leq k - l_{0}v
\end{align*}
for every $v,\tilde{v} > 0$ a.s. From Example~\ref{ex:processes with positive paths} we infer the validity of~\eqref{con:v.7} and the other conditions are readily checked. Consequently, Proposition~\ref{pr:transformed SDE} applies to the SDE~\eqref{eq:specific model}.
\end{Example}

\subsection{The logarithmised pre-default valuation PDE}\label{se:4.2}

Now we combine the stochastic volatility model of the previous section with the market model from Section~\ref{se:3} to \emph{characterise pre-default value processes via mild solutions} to the associated parabolic PDE.

Let $(\Omega,\mathscr{F},(\mathscr{F}_{t})_{t\in [0,T]},\tilde{P})$ serve as underlying probability space such that the usual conditions hold. We assume that both $S$ and $V$ take positive values only and consider the following specifications for our market model:
\begin{enumerate}[label=(P.\arabic*), ref=P.\arabic*, leftmargin=\widthof{(P.4)} + \labelsep]
\item\label{con:p.1} $G(\tau_{\mathcal{I}})$ and $G(\tau_{\mathcal{C}})$ are deterministic, absolutely continuous, positive and satisfy~\eqref{eq:condition on the survival process}, $\tau_{\mathcal{I}}$ and $\tau_{\mathcal{C}}$ are $(\mathscr{F}_{t})_{t\in [0,T]}$-conditionally independent under $\tilde{P}$ and $G(\tau) = G(\tau_{\mathcal{I}})G(\tau_{\mathcal{C}})$.
\item\label{con:p.2} The risk-free rate $r$, the remuneration rates $\null_{+}c$, $\null_{-}c$, the funding rates $\null_{+}\tilde{f}$, $\null_{-}\tilde{f}$ and the hedging rates $\null_{+}\tilde{h}$, $\null_{-}\tilde{h}$, which have integrable paths, are deterministic. That is,
\begin{equation*}
r_{t} = \hat{r}(t),\quad \null_{i}c_{t} = \hat{c}_{i}(t),\quad \null_{i}\tilde{f}_{t} = \hat{f}_{i}(t),\quad \null_{i}\tilde{h}_{t} = \hat{h}_{i}(t)
\end{equation*}
for every $t\in [0,T]$, both $i\in\{+,-\}$ and some $\hat{r},\hat{c}_{i},\hat{f}_{i}$, $\hat{h}_{i}\in\mathscr{L}^{1}(\R)$.
\item\label{con:p.3} There is a measurable function $\phi:]0,\infty[^{2}\rightarrow\R_{+}$ and a real-valued continuous function $\hat{\pi}$ on $[0,T]\times]0,\infty[^{2}$ such that $\Phi(s,v) = \phi(s(T),v(T))$ for any $s,v\in C([0,T])$ that are positive and $\pi = \hat{\pi}(\cdot,S,V)$.
\item\label{con:p.4} There are $\alpha,\beta\in C([0,T])$ with $0\leq\alpha\leq \beta\leq 1$ and some continuous function $\hat{H}:[0,T]\times]0,\infty[^{2}\times\R\rightarrow\R$ satisfying $C(Y) = \alpha Y$, $\varepsilon(Y)=\beta Y$,
\begin{equation*}
\tilde{H}(Y) = \hat{H}(\cdot,S,V,Y)\quad\text{and}\quad \tilde{F}(Y) = (1-\alpha)Y\quad\text{for all $Y\in\mathscr{S}$}.
\end{equation*}
\end{enumerate}

\begin{Remark}\label{re:integrability conditions}
Under~\eqref{con:p.1} and~\eqref{con:p.2}, Remark~\ref{re:integrability of pre-default components} entails that any $\mathscr{F}_{T}$-measurable random variable lies in $\tilde{\mathscr{L}}(r,\tau)$ if and only if it is $\tilde{P}$-integrable. Further, if $Y$ is a process that is $(\mathscr{F}_{t})_{t\in [0,T]}$-progressively measurable, then $Y\in\tilde{\mathscr{S}}(r,\tau)$ $\Leftrightarrow$ $\int_{0}^{T}\tilde{E}\big[|Y_{t}|\big]\,dt < \infty$.
\end{Remark}

It is readily seen that~\eqref{con:p.1} holds in Example~\ref{ex:default time specification} as soon as the two processes $\lambda^{(\mathcal{I})}$ and $\lambda^{(\mathcal{C})}$ there are deterministic. Put differently, for both $i\in\{\mathcal{I},\mathcal{C}\}$ there is $\hat{\lambda}_{i}\in\mathscr{L}^{1}(\R_{+})$ such that $\lambda^{(i)} = \hat{\lambda}_{i}$ and $\int_{0}^{t}\hat{\lambda}_{i}(s)\,ds > 0$ for some $t\in ]0,T]$.

Further,~\eqref{con:p.1} and~\eqref{con:p.2} imply~\eqref{con:m.2} and~\eqref{con:m.4}, respectively. If, conversely,~\eqref{con:m.4} is satisfied and the pre-default rates $\null_{+}f$, $\null_{-}f$, $\null_{+}h$, $\null_{-}h$ are deterministic, then, as required in~\eqref{con:p.2}, so are the rates $\null_{+}\tilde{f}$, $\null_{-}\tilde{f}$, $\null_{+}\tilde{h}$, $\null_{-}\tilde{h}$, according to~\eqref{eq:pre-default version}.

In~\eqref{con:p.3} the payoff functional $\Phi$ is reduced to a function of the terminal state and the dividend rate $\pi$ to a function of the current state of the process $[0,T]\times\Omega\rightarrow [0,T]\times ]0,\infty[^{2}$, $(t,\omega)\mapsto (t,S_{t},V_{t})(\omega)$. So, if~\eqref{con:p.1} and~\eqref{con:p.2} are valid, then~\eqref{con:m.1} holds if and only if
\begin{equation}\label{eq:three terms}
\tilde{E}[\phi(S_{T},V_{T})]\quad\text{and}\quad \int_{0}^{T}\tilde{E}\big[|\hat{\pi}(t,S_{t},V_{t})|\big]\,dt\quad\text{are finite},
\end{equation}
as Remark~\ref{re:integrability conditions} shows. Now let for the moment~\eqref{con:p.4} be valid. Then we can use $\hat{H}(\cdot,S,V,Y)$ as pre-default version of $\hat{H}(\cdot,S,V,\tilde{Y})$ for any $\tilde{Y}\in\tilde{\mathscr{S}}$ that is integrable up to time $\tau$ with pre-default version $Y$ if
\begin{equation}\label{eq:4.13}
\tilde{E}\big[|\hat{H}(t,S_{t},V_{t},Y_{t})|\big] < \infty\quad\text{for all $t\in [0,T]$}.
\end{equation}
Under this condition,~\eqref{con:m.3} follows from~\eqref{con:p.4}. Hence, let us now assume that~\eqref{con:p.1}-\eqref{con:p.4}, \eqref{eq:three terms} and~\eqref{eq:4.13} hold. Then~\eqref{con:m.1}-\eqref{con:m.4} are satisfied and we may turn to the pre-default valuation in~\eqref{eq:pre-default valuation}. 

Moreover,~\eqref{con:p.4} now ensures that the collateral process $C(\mathscr{V})$ and the close-out value $\varepsilon(\mathscr{V})$ are deterministic ordered fractions of any given pre-default value process $\mathscr{V}$ and the financing hypothesis~\eqref{eq:financing hypothesis} holds.

By recalling the time-dependent random functionals $\null_{0}\B$, $\null_{\mathcal{I}}\B$ and $\null_{\mathcal{C}}\B$ in~\eqref{eq:inhomogeneity process components}, we see that the random functional $\B$ given by~\eqref{eq:inhomogeneity process} is of the form
\begin{align*}
\B_{t}(Y) &= \pi_{t} - \big(c_{t}(Y)-r_{t}\big)C_{t}(Y) - \big(f_{t}(Y) - r_{t}\big)F_{t}(Y) - \big(r_{t} - h_{t}(Y)\big)H_{t}(Y) - r_{t}Y_{t}\\
&\quad - \frac{\dot{G}_{t}(\tau_{\mathcal{I}})}{G_{t}(\tau_{\mathcal{I}})}\big(\varepsilon_{t}(Y) + \mathrm{LGD}_{\mathcal{I}}\big((\varepsilon_{t} - C_{t})^{-} + F_{t}^{+}\big)(Y)\mathbbm{1}_{\{\mathcal{I}=\mathcal{B}\}} -Y_{t}\big)\\
&\quad - \frac{\dot{G}_{t}(\tau_{\mathcal{C}})}{G_{t}(\tau_{\mathcal{C}})}\big(\varepsilon_{t}(Y) -\mathrm{LGD}_{\mathcal{C}}(\varepsilon_{t} - C_{t})^{+}(Y) - Y_{t}\big) = \hat{B}(t,S_{t},V_{t},Y_{t})
\end{align*}
for all $t\in [0,T]$ and each continuous $Y\in\mathscr{S}$ with the real-valued measurable function $\hat{B}$ defined on $[0,T]\times]0,\infty[^{2}\times\R$ via
\begin{equation}\label{eq:inhomogeneity function}
\begin{split}
\hat{B}(t,s,&v,y) := \hat{\pi}(t,s,v) - \big(\hat{c}_{+}\alpha + \hat{f}_{+}(1-\alpha)\big)(t)y^{+} + \big(\hat{c}_{-}\alpha + \hat{f}_{-}(1-\alpha)\big)(t)y^{-}\\
&\quad - (\hat{r} - \hat{h}_{+})(t)\hat{H}^{+}(t,s,v,y) + (\hat{r} - \hat{h}_{-})(t)\hat{H}^{-}(t,s,v,y)\\
&\quad + g_{\mathcal{I}}(t)\bigg((1-\beta)(t)y - \mathrm{LGD}_{\mathcal{I}}\big((\beta-\alpha)(t)y^{-} + (1-\alpha)(t)y^{+}\big)\mathbbm{1}_{\{\mathcal{B}\}}(\mathcal{I})\bigg)\\
&\quad + g_{\mathcal{C}}(t)\bigg((1-\beta)(t)y + \mathrm{LGD}_{\mathcal{C}}(\beta - \alpha)(t)y^{+}\bigg),
\end{split}
\end{equation}
where $g_{i}:[0,T]\rightarrow\R$ is a measurable integrable function satisfying $g_{i} = \dot{G}(\tau_{i})/G(\tau_{i})$ for both $i\in\{\mathcal{I},\mathcal{C}\}$. In this deterministic setting, all the cash flows and costs and benefits
\begin{equation*}
\null_{\mathrm{con}}\mathrm{CF}(\mathscr{V}),\quad\null_{\mathrm{col}}\mathrm{C}(\mathscr{V}),\quad \null_{\mathrm{fun}}\mathrm{C}(\mathscr{V}),\quad \null_{\mathrm{hed}}\mathrm{C}(\mathscr{V})\quad\text{and}\quad\null_{\mathrm{def}}\mathrm{CF}(\mathscr{V},\tilde{\mathscr{V}})
\end{equation*}
are measurable functionals of $(\tau_{\mathcal{I}},\tau_{\mathcal{C}}, S , V, \mathscr{V}, \tilde{\mathscr{V}})$ for any $\tilde{\mathscr{V}}\in\tilde{\mathscr{S}}$ that is integrable up to time $\tau$ with pre-default version $\mathscr{V}$. Further, the continuous process $A(\mathscr{V})$, given by~\eqref{eq:finite variation process} and appearing in~\eqref{eq:pre-default valuation}, is adapted to the natural filtration of $(S,V,\mathscr{V})$ and we may now deal with the assumed existence of $\tilde{P}$.

So, suppose that~\eqref{eq:stochastic volatility model} is solved strongly by $S$ and $V$ on $[t_{0},T]$ under $P$ for some $t_{0}\in [0,T]$ and~\eqref{con:v.1} holds. Let $\lambda$, $\tilde{\lambda}$ be two $(\mathscr{F}_{t})_{t\in [t_{0},T]}$-progressively measurable processes with square-integrable paths and define a continuous local martingale $Z$ via
\begin{equation}\label{eq:density process}
Z^{t_{0}} = 1\quad\text{and}\quad Z_{t} = \exp\bigg(-\int_{t_{0}}^{t}\lambda_{s}\,dW_{s} - \int_{t_{0}}^{t}\tilde{\lambda}_{s}\,d\tilde{W}_{s} - \frac{1}{2}\int_{t_{0}}^{t}\lambda_{s}^{2} + \tilde{\lambda}_{s}^{2}\,ds\bigg)
\end{equation}
for any $t\in [t_{0},T]$ a.s. Under the condition that $E[Z_{T}] = 1$, Girsanov's theorem states that $W^{(\lambda)}:= W + \int_{t_{0}}^{\cdot\vee t_{0}}\lambda_{s}\,ds$ and $W^{(\tilde{\lambda})}:=\tilde{W} + \int_{t_{0}}^{\cdot\vee t_{0}}\tilde{\lambda}_{s}\,ds$ are independent $(\mathscr{F}_{t})_{t\in [0,T]}$-Brownian motions under the measure $\hat{P}_{t_{0},\lambda,\tilde{\lambda}}$ on $(\Omega,\mathscr{F})$ given by $\hat{P}_{t_{0},\lambda,\tilde{\lambda}}(A):=E[Z_{T}\mathbbm{1}_{A}]$. 

Assuming that $S$ is the only price process to consider, Remark~\ref{re:martingale property} and~\eqref{con:p.1} entail that $\hat{P}_{t_{0},\lambda,\tilde{\lambda}}$ is an equivalent local martingale measure after time $t_{0}$ if and only if
\begin{equation*}
(b - \hat{r})(t) = \theta(t,V_{t})\big(\lambda_{t}\sqrt{1-\rho(t)^{2}} + \tilde{\lambda}_{t}\rho(t)\big)\quad\text{for a.e.~$t\in [t_{0},T]$ a.s.}
\end{equation*}
Indeed, this follows from the representation~\eqref{eq:stochastic exponential} in Lemma~\ref{le:price process representation} and the relation between $\hat{W}$ and $(W,\tilde{W})$, stated directly before~\eqref{eq:transformed stochastic volatility model}. Thus, if $\theta(\cdot,v) > 0$ for all $v > 0$, then we propose to take the \emph{market prices of risk}
\begin{equation}\label{eq:market prices}
\tilde{\lambda}_{t} = \gamma \theta(t,V_{t})\quad\text{and}\quad \lambda_{t} = \bigg( \frac{(b-\hat{r})(t)}{\theta(t,V_{t})} - \gamma\theta(t,V_{t})\rho(t)\bigg)\frac{1}{\sqrt{1-\rho(t)^{2}}}
\end{equation}
for all $t\in [t_{0},T]$ and some $\gamma\in\R_{+}$. As $V$ is a strong solution, $\lambda$ is independent of $W$. Hence, if $b$, $\hat{r}$, $\rho$ and $\theta$ are continuous, then it follows from Theorem~21.4 in~\cite{Bau00} and Lemma~35 in~\cite{ConKal20} that
\begin{equation*}
E[Z_{T}] = E\bigg[\exp\bigg(-\gamma\int_{t_{0}}^{T}\theta(t,V_{t})\,d\tilde{W}_{t} - \frac{1}{2}\gamma^{2}\int_{t_{0}}^{T}\theta(t,V_{t})^{2}\,dt\bigg)\bigg],
\end{equation*}
and for $E[Z_{T}] =1$ to hold, it suffices that $\exp(\frac{\gamma^{2}}{2}\int_{t_{0}}^{T}\theta(t,V_{t})^{2}\,dt)$ is $P$-integrable, by Novikov's condition. In this case, we set $\tilde{P}_{t_{0},V,\gamma}:= \hat{P}_{t_{0},\lambda,\tilde{\lambda}}$ and for the log-price process $X=\log(S)$ we see that $(X,V)$ solves the SDE
\begin{equation}\label{eq:transformed SDE 2}
d\begin{pmatrix}
X_{t} \\
V_{t}
\end{pmatrix}
 = \begin{pmatrix}
\hat{r}(t) - \frac{1}{2}\theta(t,V_{t})^{2} \\
\big(\zeta - \gamma\hat{\eta}\hat{\theta}\big)(t,V_{t})
\end{pmatrix}\,dt + \begin{pmatrix}
\theta(t,V_{t})\sqrt{1-\rho(t)^{2}} & \theta(t,V_{t})\rho(t) \\
0 & \eta(t,V_{t})
\end{pmatrix}\,d
\begin{pmatrix}
W_{t}^{(\lambda)} \\
W_{t}^{(\tilde{\lambda})}
\end{pmatrix}
\end{equation} 
for $t\in [t_{0},T]$ under $\tilde{P}_{t_{0},V,\gamma}$, where $\hat{\eta},\hat{\theta}:[0,T]\times\R\rightarrow\R$ are given by $\hat{\eta}(t,v):=\eta(t,v^{+})$ and $\hat{\theta}(t,v):=\theta(t,v^{+})$, since the values of $\eta$ and $\theta$ on $[0,T]\times ]-\infty,0[$ are irrelevant. In particular, as $\gamma=0$ feasible, there exists an equivalent local martingale measure, and we refer to Section~3 in~\cite{WonHey06} for a more specific analysis in the Heston model.

Under certain conditions, we now show that if $\mathscr{V}\in\mathscr{S}$ is of the form $\mathscr{V}_{t}$ $= u(t,X_{t},V_{t})$ for all $t\in [0,T]$ and some continuous function $u:[0,T]\times\R\times ]0,\infty[\rightarrow\R$, then it solves~\eqref{eq:pre-default valuation} as soon as $u$ is a mild solution to the parabolic semilinear PDE
\begin{equation}\label{eq:parabolic equation}
\begin{split}
\frac{\partial u}{\partial t}(t,x,v) + \mathscr{L}_{\hat{r},\zeta - \gamma\hat{\eta}\hat{\theta}}(u)(t,x,v) = - \hat{B}(t,e^{x},v,u(t,x,v))
\end{split}
\end{equation}
for all $(t,x,v)\in [0,T[\times\R\times ]0,\infty[$ with terminal value condition $u(T,\cdot,\cdot) = \phi(\exp(\cdot),\cdot)$. Thereby, $ \mathscr{L}_{\hat{r},\zeta - \gamma\hat{\eta}\hat{\theta}}$ stands for the differential operator~\eqref{eq:differential operator} when $(b,\zeta)$ is replaced by $(\hat{r},\zeta - \gamma\hat{\eta}\hat{\theta})$ with the arbitrarily chosen $\gamma\in\R_{+}$.

For introduction of the mild solution concept and the resulting characterisation of value processes, we first ensure that Proposition~\ref{pr:transformed SDE} applies to any two-dimensional SDE whose coefficients agree with those of~\eqref{eq:transformed SDE 2}, by introducing the following condition:
\begin{enumerate}[label=(V.\arabic*), ref=V.\arabic*, leftmargin=\widthof{(V.8)} + \labelsep]
\setcounter{enumi}{7}
\item\label{con:v.8} If $\gamma > 0$, then $\theta$ is continuous, $\theta(\cdot,0) = 0$ and there are $k_{\theta,\eta},l_{\theta,\eta},\lambda_{\theta,\eta}\in\mathscr{L}^{1}(\R)$, $\varepsilon_{0} > 0$ and $\lambda_{\eta,0}\in\mathscr{L}^{2}(\R_{+})$ such that
\begin{equation}\label{eq:additional conditions}
(\eta\theta)(\cdot,v) \geq k_{\theta,\eta} + l_{\theta,\eta}v\quad\text{and}\quad \mathrm{sgn}(v-\tilde{v})\big((\eta\theta)(\cdot,v) - (\eta\theta)(\cdot,\tilde{v})\big)\geq \lambda_{\theta,\eta}|v-\tilde{v}|
\end{equation}
for any $v,\tilde{v}\in\R_{+}$ and $|\eta(\cdot,v)| \leq \lambda_{\eta,0}v^{1/2}$ for all $v\in ]0,\varepsilon_{0}[$ a.e.
\end{enumerate}

\begin{Remark}
Let $\theta(t,\cdot)$ and $\eta(t,\cdot)$ be non-negative and increasing on $\R_{+}$ for a.e.~$t\in [0,T]$, $\eta(\cdot,0) = 0$ and~\eqref{con:v.4} be valid for $\rho_{1}(v)= v^{1/2}$ for all $v\in\R_{+}$. Then~\eqref{eq:additional conditions} and the succeeding bound on $\eta$ hold for $k_{\theta,\eta} = l_{\theta,\eta}=\lambda_{\theta,\eta} = 0$, $\varepsilon_{0} = 1$ and $\lambda_{\eta,0}=\lambda_{\eta,1}$.
\end{Remark}

Now we assume that~\eqref{con:v.1}-\eqref{con:v.8} hold, in which case the same conditions~\eqref{con:v.1}-\eqref{con:v.7} follow for $\hat{r}$ and $\zeta - \gamma\hat{\theta}\hat{\eta}$ instead of $b$ and $\zeta$, respectively, which shows that Proposition~\ref{pr:transformed SDE} also covers the type of two-dimensional SDE that we just derived.

Thus, for any $(s,x,v)\in [0,T]\times\R\times ]0,\infty[$ let $(\tilde{X}^{s,x,v},\tilde{V}^{s,v})$ be a strong solution to~\eqref{eq:transformed stochastic volatility model} on $[s,T]$ when $(b,\zeta)$ is replaced by $(\hat{r},\zeta-\gamma\hat{\eta}\hat{\theta})$ such that $(\tilde{X}_{s}^{s,x,v},\tilde{V}_{s}^{s,v}) = (x,v)$ a.s.~and $\tilde{V}^{s,v} > 0$. Further, denote the law of the continuous process
\begin{equation*}
[0,T]\times\Omega\rightarrow\R\times ]0,\infty[,\quad (t,\omega)\mapsto (\tilde{X}_{t\vee s}^{s,x,v},\tilde{V}_{t\vee s}^{s,v})(\omega)
\end{equation*}
by $\tilde{P}_{s,x,v}$ and recall the canonical process $(\hat{X},\hat{V})$ from Section~\ref{se:4.1}. Then, for a measurable function $\tilde{B}:[0,T]\times ]0,\infty[^{2}\times\R\rightarrow\R$ a \emph{mild solution} to the terminal value problem~\eqref{eq:parabolic equation} with $\tilde{B}$ instead of $\hat{B}$ is a measurable function $u:[0,T]\times\R\times ]0,\infty[\rightarrow\R$ for which
\begin{equation*}
\int_{s}^{T}\big|\tilde{B}\big(t,\exp(\hat{X}_{t}),\hat{V}_{t},u(t,\hat{X}_{t},\hat{V}_{t})\big)\big|\,dt
\end{equation*}
is finite and $\tilde{P}_{s,x,v}$-integrable such that the implicit integral equation
\begin{equation}\label{eq:mild solution}
\begin{split}
\tilde{E}_{s,x,v}\big[\phi(\exp(\hat{X}_{T}),\hat{V}_{T})\big] &= u(s,x,v) \\
&\quad- \tilde{E}_{s,x,v}\bigg[\int_{s}^{T}\tilde{B}\big(t,\exp(\hat{X}_{t}),\hat{V}_{t},u(t,\hat{X}_{t},\hat{V}_{t})\big)\,dt\bigg]
\end{split}
\end{equation}
holds for any $(s,x,v)\in [0,T]\times\R\times ]0,\infty[$. Provided that $\phi$ is continuous, we also recall that a \emph{classical solution} to this terminal value problem is a real-valued continuous function $u$ on $[0,T]\times\mathbb{R}\times ]0,\infty[$ that lies in $C^{1,2,2}([0,T[\times\mathbb{R}\times ]0,\infty[)$ and satisfies~\eqref{eq:parabolic equation} and $u(T,\cdot,\cdot)$ $= \phi(\exp(\cdot),\cdot)$. 

Thereby, we stress the fact that if $\tilde{B}$ satisfies the two conditions~\eqref{eq:affine growth condition} and~\eqref{eq:Lipschitz condition on compact sets} considered below and $\phi$ is bounded and continuous, then every bounded classical solution is also a mild solution. For instance, see Section 2.4 in~\cite{KalSch16} for a concise justification.

Next, as we will use any of the derived local martingale measures, let $(X^{s,x,v},V^{s,v})$ be a strong solution to~\eqref{eq:transformed stochastic volatility model} on $[s,T]$ satisfying $(X_{s}^{s,x,v},V_{s}^{s,v}) = (x,v)$ a.s.~and $V^{s,v} > 0$, and write $P_{s,x,v}$ for the law of the process $[0,T]\times\Omega\rightarrow\R\times ]0,\infty[$, $(t,\omega)\mapsto (X_{t\vee s}^{s,x,v},V_{t\vee s}^{s,v})(\omega)$ for each $(s,x,v)\in [0,T]\times\R\times ]0,\infty[$, just as in Proposition~\ref{pr:transformed SDE}. Let us also recall the regularity conditions that we used for this derivation:
\begin{enumerate}[label=(V.\arabic*), ref=V.\arabic*, leftmargin=\widthof{(V.9)} + \labelsep]
\setcounter{enumi}{8}
\item\label{con:v.9} $b$, $\hat{r}$, $\rho$ and $\theta$ are continuous and $\theta(\cdot,v) > 0$ for all $v > 0$.
\end{enumerate}

Now we come to one of the main results of our work, \emph{a characterisation of value processes in terms of mild solutions}, which motivates to call any mild solution to the terminal value problem~\eqref{eq:parabolic equation} \emph{a pre-default valuation function}. 

\begin{Theorem}\label{th:pre-default value process}
Let~\eqref{con:v.1}-\eqref{con:v.9} and~\eqref{con:p.1}-\eqref{con:p.4} hold, $\phi$ be bounded and
\begin{equation}\label{eq:pre-default condition 1}
\int_{s}^{T}\tilde{E}_{s,x,v}\big[|\hat{\pi}(t,\exp(\hat{X}_{t}),\hat{V}_{t})|\big]\,dt\quad\text{and}\quad E_{s,x,v}\bigg[\exp\bigg(\frac{\gamma^{2}}{2}\int_{s}^{T}\theta(t,\hat{V}_{t})^{2}\,dt\bigg)\bigg]
\end{equation}
be finite for all $(s,x,v)\in [0,T]\times\R\times ]0,\infty[$. Further, let $u\in C_{b}([0,T]\times\R\times ]0,\infty[)$ and define $\mathscr{V}\in\mathscr{S}$ by $\mathscr{V}_{t}:=u(t,X_{t},V_{t})$. Then the following two assertions hold:
\begin{enumerate}[(i)]
\item Suppose for each $(s,x,v)\in [0,T]\times\R\times ]0,\infty[$ that
\begin{equation}\label{eq:pre-default condition 2}
\sup_{t\in [s,T]}\tilde{E}_{s,x,v}\big[\big|\hat{H}\big(t,\exp(\hat{X}_{t}),\hat{V}_{t},u(t,\hat{X}_{t},\hat{V}_{t})\big)\big|\big] < \infty,
\end{equation}
and $\mathscr{V}$ solves~\eqref{eq:pre-default valuation} on $[s,T]$ whenever $(X,V)$ is a solution to~\eqref{eq:transformed stochastic volatility model} on $[s,T]$ with $(X^{s},V^{s}) = (x,v)$ a.s.~and $\tilde{P} = \tilde{P}_{s,V,\gamma}$. Then $u$ is a mild solution to~\eqref{eq:parabolic equation}.
\item Conversely, let $u$ be a mild solution to~\eqref{eq:parabolic equation} and $(s,x,v)\in [0,T]\times\R\times ]0,\infty[$ be such that~\eqref{eq:pre-default condition 2} holds. If $(X,V)$ solves~\eqref{eq:transformed stochastic volatility model} on $[s,T]$,
\begin{equation*}
(X^{s},V^{s}) = (x,v)\quad\text{a.s.},\quad\tilde{P} = \tilde{P}_{s,V,\gamma}
\end{equation*}
and $(\mathscr{F}_{t})_{t\in [0,T]}$ is the right-continuous filtration of the augmented natural filtration of $(X,V)$, then $\mathscr{V}$ is a solution to~\eqref{eq:pre-default valuation} on $[s,T]$.
\end{enumerate}
\end{Theorem}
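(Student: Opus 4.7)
The plan is to link the pre-default valuation equation~\eqref{eq:pre-default valuation} with the mild-solution identity~\eqref{eq:mild solution} through the intermediate Feynman--Kac form
\begin{equation*}
\mathscr{V}_{\sigma} = \tilde{E}\bigg[\phi(S_{T},V_{T}) + \int_{\sigma}^{T}\hat{B}(t, S_{t}, V_{t}, \mathscr{V}_{t})\,dt\,\bigg|\,\mathscr{F}_{\sigma}\bigg]\quad\text{a.s.\ for }\sigma\in [s,T],
\end{equation*}
which, as I will show, is equivalent to~\eqref{eq:pre-default valuation} in the present deterministic market set-up and, via the Markov property of $(X,V)$, also to~\eqref{eq:mild solution}. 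The starting observation is that the identity $\B_{t}(\mathscr{V}) = \hat{B}(t, S_{t}, V_{t}, \mathscr{V}_{t})$, derived directly above~\eqref{eq:inhomogeneity function} in the text under~\eqref{con:p.1}-\eqref{con:p.4}, exactly matches the nonlinearity in~\eqref{eq:parabolic equation}.

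Before either direction I verify the path-regularity~\eqref{con:c.1} and the integrability conditions~\eqref{con:c.2}-\eqref{con:c.3} for $\mathscr{V}_{t} := u(t,X_{t},V_{t})$: continuity of $u$ and of $(X,V)$ makes $\mathscr{V}$ continuous, and~\eqref{con:p.4} then propagates continuity to $C(\mathscr{V}) = \alpha\mathscr{V}$, $F(\mathscr{V}) = (1-\alpha)\mathscr{V}$, $\varepsilon(\mathscr{V}) = \beta\mathscr{V}$ and $H(\mathscr{V}) = \hat{H}(\cdot,S,V,\mathscr{V})$. Boundedness of $u,\alpha,\beta$, integrability of the deterministic rates from~\eqref{con:p.2}, the hypotheses~\eqref{eq:pre-default condition 1}-\eqref{eq:pre-default condition 2}, and the deterministic bounds on $G(\tau_{\mathcal{I}}),G(\tau_{\mathcal{C}})$ then yield~\eqref{con:c.2}-\eqref{con:c.3} via Remark~\ref{re:integrability conditions}.

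The core analytic step is the equivalence between~\eqref{eq:pre-default valuation} and the Feynman--Kac form. Since $G(\tau_{\mathcal{I}}),G(\tau_{\mathcal{C}})$ are deterministic, absolutely continuous and positive, the characterisation~\eqref{eq:specific representation} applies. Writing $K_{t} := D_{0,t}(r)G_{t}(\tau)$, a smooth deterministic function with $\dot{K}_{t}/K_{t} = -(\hat{r}(t) - \dot{G}_{t}(\tau)/G_{t}(\tau))$, multiplying~\eqref{eq:specific representation} by $K_{s}$ and exchanging conditional expectation with the $dt$-integral by Fubini (legitimate because $K$ is deterministic), the identity becomes the linear Volterra equation
\begin{equation*}
K_{t}g(t) = K_{s}\mathscr{V}_{s} + \int_{s}^{t}\dot{K}_{u}g(u)\,du - \int_{s}^{t}K_{u}h(u)\,du\quad\text{for all }t\in [s,T],
\end{equation*}
where $g(t) := \tilde{E}[\mathscr{V}_{t}|\mathscr{F}_{s}]$ and $h(t) := \tilde{E}[\hat{B}(t,S_{t},V_{t},\mathscr{V}_{t})|\mathscr{F}_{s}]$. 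Solving by the integrating factor $K^{-1}$ (equivalently, noting that both sides satisfy the same linear integral equation with Gronwall uniqueness) gives $g(t) = \mathscr{V}_{s} - \int_{s}^{t}h(u)\,du$, which by the tower property is precisely the Feynman--Kac form; reversing the manipulation supplies the converse.

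The final step uses the Markov property from Proposition~\ref{pr:transformed SDE}: condition~\eqref{con:v.8} ensures that $(\hat{r},\zeta-\gamma\hat{\eta}\hat{\theta})$ still satisfies~\eqref{con:v.1}-\eqref{con:v.7}, so~\eqref{eq:transformed SDE 2} admits strong solutions yielding a right-hand Feller diffusion with transition laws $\tilde{P}_{s,x,v}$, and for the right-continuous augmentation of the natural filtration of $(X,V)$ the identity $\tilde{E}[\psi(t,X_{t},V_{t})|\mathscr{F}_{\sigma}] = \tilde{E}_{\sigma,X_{\sigma},V_{\sigma}}[\psi(t,\hat{X}_{t},\hat{V}_{t})]$ holds for every bounded measurable $\psi$. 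Substituting $\mathscr{V}_{t} = u(t,X_{t},V_{t})$ into the Feynman--Kac form at $\sigma = s$ with $(X_{s},V_{s}) = (x,v)$ a.s.\ transforms its right-hand side into $\tilde{E}_{s,x,v}[\phi(\exp(\hat{X}_{T}),\hat{V}_{T}) + \int_{s}^{T}\hat{B}(t,\exp(\hat{X}_{t}),\hat{V}_{t},u(t,\hat{X}_{t},\hat{V}_{t}))\,dt]$, proving (i); conversely, applying~\eqref{eq:mild solution} pointwise at $(\sigma,X_{\sigma},V_{\sigma})$ recovers the Feynman--Kac form for $\mathscr{V}_{t} = u(t,X_{t},V_{t})$, which together with the above equivalence delivers (ii). The hardest step is the equivalence between~\eqref{eq:pre-default valuation} and the Feynman--Kac form, because the absolutely continuous deterministic weight $K_{t}$ must be reconciled with the random nonlinearity $\B_{t}(\mathscr{V})$ without assuming $\mathscr{V}$ is a semimartingale; the product structure $G(\tau) = G(\tau_{\mathcal{I}})G(\tau_{\mathcal{C}})$ from~\eqref{con:p.1} together with the cancellation $\dot{K}_{t}/K_{t} = -(\hat{r}(t) - \dot{G}_{t}(\tau)/G_{t}(\tau))$ is what makes the Fubini/integrating-factor argument go through.
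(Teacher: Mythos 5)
Your proposal is correct, and it follows a genuinely different route from the paper. The paper proves the theorem through the semimartingale machinery of Section~3: it invokes Proposition~\ref{pr:martingale characterisation} to show $\null_{\mathscr{V}}M$ is a martingale, uses It\^{o}'s product rule via Proposition~\ref{pr:backward stochastic representation} to obtain the backward integral representation~\eqref{eq:specific backward integral representation}, verifies the stochastic integral is of class (DL), and only then takes expectations; for~(ii) it builds a martingale $N$ from the mild-solution identity, forms a stochastic integral against it, and appeals to the uniqueness part of Proposition~\ref{pr:backward stochastic representation}. You instead bypass the semimartingale structure entirely: passing to~\eqref{eq:specific representation}, conditioning on $\mathscr{F}_{s}$ and using Fubini (legitimate under~\eqref{con:p.1}--\eqref{con:p.2}, which make $K_{t}=D_{0,t}(r)G_{t}(\tau)$ deterministic and absolutely continuous), you reduce the matter to a scalar linear Volterra equation in $g(\cdot)=\tilde{E}[\mathscr{V}_{\cdot}|\mathscr{F}_{s}]$, solved by a deterministic integrating-factor/Gronwall argument. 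This buys a more elementary proof that never needs to establish that $\mathscr{V}$ is a semimartingale, nor class-(DL) considerations; the paper's approach is more general-purpose and dovetails with its BSDE characterisations. A few presentational points are worth tightening: $K$ is absolutely continuous rather than smooth; the Volterra relation over $t\in[s,T]$ is obtained not by manipulating~\eqref{eq:specific representation} at $s$ alone but by applying it at every $\sigma\in[s,T]$, towering on $\mathscr{F}_{s}$, and subtracting the $\sigma=s$ identity; to get~\eqref{eq:specific representation} back at \emph{every} $\sigma\in[s,T]$ from the Feynman--Kac form you must rerun the integrating-factor argument with $\sigma$ as base point, not only $s$; and the Markov identity you quote is for single-time functionals, so the path functional $\phi(\exp(\hat{X}_T),\hat{V}_T)+\int_{\sigma}^{T}\hat{B}\,dt$ must first be reduced to a $dt$-integral of single-time functionals via Fubini. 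You also ought to note explicitly, as the paper does, that~\eqref{con:p.1}--\eqref{con:p.4} together with~\eqref{eq:three terms}--\eqref{eq:4.13} deliver~\eqref{con:m.1}--\eqref{con:m.4}, and that the integrability half of the mild-solution definition follows from~\eqref{con:c.2}--\eqref{con:c.3} and the boundedness of $u$. None of these affect the validity of the argument.
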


\begin{Remark}
The integrability conditions in~\eqref{eq:pre-default condition 1} and~\eqref{eq:pre-default condition 2} on $\hat{\pi}$ and $\hat{H}$ are satisfied if there are $c_{\hat{\pi}}\in\mathscr{L}^{1}(\R_{+})$ and $c_{\hat{H}}\in\R_{+}$ such that
\begin{equation*}
|\hat{\pi}(\cdot,e^{x},v)| \leq c_{\hat{\pi}}(1 + x + v)\quad\text{for any $(x,v)\in\R\times ]0,\infty[$ a.e.}
\end{equation*}
and $|\hat{H}(\cdot,e^{x},v,y)| \leq c_{\hat{H}}(1 + x + v + |y|)$ for every $(x,v,y)\in\R\times ]0,\infty[\times\R$, as the two growth estimates~\eqref{eq:log-price growth estimate} and~\eqref{eq:variance growth estimate} show.
\end{Remark}

For an analysis of mild solutions to semilinear PDEs such as~\eqref{eq:parabolic equation} we will now apply results from~\cite{Kal20}. Let $\tilde{B}$ be a real-valued measurable function on $[0,T]\times ]0,\infty[^{2}\times\mathbb{R}$ for which the following \emph{affine growth condition} and \emph{Lipschitz condition on compact sets} hold: There is $c_{\tilde{B}}\in\mathscr{L}^{1}(\mathbb{R}_{+})$ such that
\begin{equation}\label{eq:affine growth condition}
|\tilde{B}(\cdot,e^{x},v,y)| \leq c_{\tilde{B}}(1 + |y|)
\end{equation}
for all $v > 0$ and any $x,y\in\mathbb{R}$ a.e. Moreover, for every $n\in\mathbb{N}$ there is $\lambda_{\tilde{B},n}\in\mathscr{L}^{1}(\mathbb{R}_{+})$ satisfying
\begin{equation}\label{eq:Lipschitz condition on compact sets}
|\tilde{B}(\cdot,e^{x},v,y) - \tilde{B}(\cdot,e^{x},v,\tilde{y})| \leq \lambda_{\tilde{B},n}|y-\tilde{y}|
\end{equation}
for all $(x,v)\in\mathbb{R}\times ]0,\infty[$ and any $y,\tilde{y}\in [-n,n]$ a.e. If $\phi$ is bounded, then Theorem~2.15 in~\cite{Kal20} yields a unique bounded mild solution $u_{\tilde{B},\phi}$ to the terminal value problem~\eqref{eq:parabolic equation} when $\hat{B}$ is replaced by $\tilde{B}$.

To ensure that~\eqref{eq:affine growth condition} and~\eqref{eq:Lipschitz condition on compact sets} hold for $\hat{B}$, we require that the dividend function $\hat{\pi}$ obeys a suitable bound and the pre-default hedging function $\hat{H}$ is both of affine growth and locally Lipschitz continuous in $y\in\R$, uniformly in $(t,s,v)\in [0,T]\times ]0,\infty[^{2}$:
\begin{enumerate}[label=(P.\arabic*), ref=P.\arabic*, leftmargin=\widthof{(P.5)} + \labelsep]
\setcounter{enumi}{4}
\item\label{con:p.5} There exist some $c_{\hat{\pi}}\in\mathscr{L}^{1}(\mathbb{R}_{+})$ and $c_{\hat{H}}\geq 0$ satisfying $|\hat{\pi}(\cdot,\exp(x),v)|\leq c_{\hat{\pi}}$ for every $(x,v)\in\mathbb{R}\times ]0,\infty[$ a.e.~and
\begin{equation*}
|\hat{H}(t,e^{x},v,y)| \leq c_{\hat{H}}(1 + |y|)
\end{equation*}
for all $(t,x,v,y)\in [0,T]\times\R\times]0,\infty[\times\R$. Further, for each $n\in\mathbb{N}$ there is $\lambda_{\hat{H},n}\geq 0$ satisfying
\begin{equation*}
|\hat{H}(t,e^{x},v,y) - \hat{H}(t,e^{x},v,\tilde{y})| \leq \lambda_{\hat{H},n}|y-\tilde{y}|
\end{equation*}
for each $(t,x,v)\in [0,T]\times\mathbb{R}\times ]0,\infty[$ and every $y,\tilde{y}\in [-n,n]$.
\end{enumerate}

Then an application of Theorem 2.15 in~\cite{Kal20} yields \emph{the existence and uniqueness of a mild solution, including a right-continuity and value analysis}. Thereby, $J$ denotes an interval in $\R$ with $\underline{d} := \inf J$ and $\overline{d}:=\sup J$.

\begin{Proposition}\label{pr:mild solution}
Let~\eqref{con:v.1}-\eqref{con:v.8} and~\eqref{con:p.1}-\eqref{con:p.5} be valid and $\phi$ be bounded. Then there is a unique bounded mild solution $u_{\phi}$ to~\eqref{eq:parabolic equation} that is right-continuous if $\phi$ is continuous. Moreover, if
\begin{equation}\label{eq:boundary condition}
\text{$\underline{d} > - \infty$ (resp.~$\overline{d} < \infty$) implies $\hat{B}(\cdot,e^{x},v,\underline{d}) \geq 0$ (resp.~$\hat{B}(\cdot,e^{x},v,\overline{d}) \leq 0$)}
\end{equation}
for all $(x,v)\in\mathbb{R}\times ]0,\infty[$ a.e., then $u_{\phi}$ takes all its values in $J$ as soon as $\phi$ does.
\end{Proposition}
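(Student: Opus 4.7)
The plan is to reduce the statement to a direct application of Theorem~2.15 in~\cite{Kal20}, whose hypotheses involve a right-hand Feller time-inhomogeneous Markov process together with a driver satisfying the affine growth condition~\eqref{eq:affine growth condition} and the Lipschitz condition on compact sets~\eqref{eq:Lipschitz condition on compact sets}. The main work is therefore to verify that our framework meets these prerequisites with $\tilde{B} = \hat{B}$ and $\phi$ as given.

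First I will invoke Proposition~\ref{pr:transformed SDE}, whose conditions~\eqref{con:v.1}-\eqref{con:v.7} hold automatically with $(\hat{r},\zeta-\gamma\hat{\eta}\hat{\theta})$ in place of $(b,\zeta)$ once~\eqref{con:v.8} is available, as already observed after the introduction of~\eqref{con:v.8}. This yields the diffusion $((\hat{X},\hat{V}),(\hat{\mathscr{F}}_{t})_{t\in [0,T]},\mathbb{P})$ with the right-hand Feller property, which is exactly the setting of Section~2.3 in~\cite{Kal20}.

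Next I will check that $\hat{B}$, given by~\eqref{eq:inhomogeneity function}, fulfils~\eqref{eq:affine growth condition} and~\eqref{eq:Lipschitz condition on compact sets}. The deterministic rates $\hat{r},\hat{c}_{\pm},\hat{f}_{\pm},\hat{h}_{\pm}$ together with $g_{\mathcal{I}},g_{\mathcal{C}}$ are in $\mathscr{L}^{1}(\R)$ by~\eqref{con:p.1} and~\eqref{con:p.2}, while $\alpha,\beta\in C([0,T])$ are bounded by~\eqref{con:p.4}. Combined with the affine growth and local Lipschitz bounds on $\hat{H}$ in $y$ imposed in~\eqref{con:p.5} and with the fact that $y\mapsto y^{\pm}$ is $1$-Lipschitz, this reduces the verification of~\eqref{eq:affine growth condition} and~\eqref{eq:Lipschitz condition on compact sets} to pointwise inequalities whose coefficients are integrable in $t$. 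The uniform bound $|\hat{\pi}(\cdot,e^{x},v)|\leq c_{\hat{\pi}}$ in~\eqref{con:p.5} takes care of the inhomogeneous part. Therefore $\hat{B}$ satisfies both conditions, and applying Theorem~2.15 in~\cite{Kal20} produces the unique bounded mild solution $u_{\phi}$ and, provided $\phi$ is continuous, the asserted right-continuity in the sense introduced in Section~\ref{se:2.1}.

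Finally, for the assertion that $u_{\phi}$ takes values in $J$ whenever $\phi$ does, the plan is to invoke the same theorem's boundary-value part: the sign conditions $\hat{B}(\cdot,e^{x},v,\underline{d})\geq 0$ and $\hat{B}(\cdot,e^{x},v,\overline{d})\leq 0$ in~\eqref{eq:boundary condition} are precisely the monotonicity assumptions at the endpoints of $J$ that confine the mild solution to $J$; this follows by the Picard iteration scheme discussed for mild solutions in~\cite{Kal20}, since the implicit integral representation~\eqref{eq:mild solution} preserves $J$ under these sign conditions. I expect the principal obstacle to be bookkeeping rather than mathematics: namely carefully checking that the piecewise definition of $\hat{B}$ through $y^{\pm}$ and through the sign of $\hat{H}$ does not damage the local Lipschitz estimate~\eqref{eq:Lipschitz condition on compact sets}, for which the $1$-Lipschitz property of $y\mapsto y^{\pm}$ together with the continuity of $\hat{H}$ in $y$ is the decisive point.
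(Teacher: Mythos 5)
Your proposal follows essentially the same route as the paper's proof: invoke Proposition~\ref{pr:transformed SDE} (valid for $(\hat{r},\zeta-\gamma\hat{\eta}\hat{\theta})$ under~\eqref{con:v.8}) to get the right-hand Feller diffusion, verify~\eqref{eq:affine growth condition} and~\eqref{eq:Lipschitz condition on compact sets} for $\hat{B}$ from the integrability of the rates in~\eqref{con:p.1},~\eqref{con:p.2},~\eqref{con:p.4} and the bounds on $\hat{\pi}$, $\hat{H}$ in~\eqref{con:p.5}, and then apply Theorem~2.15 in~\cite{Kal20} for existence, uniqueness, right-continuity and the $J$-confinement. The one small item the paper makes explicit that you only touch in passing is the continuity of $\hat{B}(t,\cdot,\cdot,\cdot)$ in the state variables $(x,v,y)$ for a.e.~$t$ (a separate hypothesis of Theorem~2.15 beyond the Lipschitz condition in $y$), which follows from the continuity of $\hat{\pi}$ and $\hat{H}$ postulated in~\eqref{con:p.3} and~\eqref{con:p.4}.
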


\begin{Remark}
As payoff function, $\phi$ is modelled to be $\mathbb{R}_{+}$-valued. Thus, the pre-default valuation function $u_{\phi}$ is non-negative if
\begin{equation}\label{eq:non-negativity condition}
\hat{\pi}(\cdot,e^{x},v) \geq (\hat{r}-\hat{h}_{+})\hat{H}^{+}(\cdot,e^{x},v,0) - (\hat{r} - \hat{h}_{-})\hat{H}^{-}(\cdot,e^{x},v,0)
\end{equation}
for any $(x,v)\in\mathbb{R}\times ]0,\infty[$ a.e. In this case, $u_{\phi} > 0$ follows from $\phi > 0$.
\end{Remark}

Next, from Lemma 4.2 in~\cite{Kal20} we obtain \emph{a sensitivity analysis}. Let $\tilde{\phi}:]0,\infty[^{2}\rightarrow\R$ be measurable and bounded and suppose in the setting of Proposition~\ref{pr:mild solution} that $u_{\phi}$ and $u_{\tilde{B},\tilde{\phi}}$ are $J$-valued. This is the case if $\phi,\tilde{\phi}\in J$ and condition~\eqref{eq:boundary condition} holds not only for $\hat{B}$ but also for $\tilde{B}$. Then
\begin{equation}\label{eq:comparison}
\tilde{B}\geq\hat{B}\quad\text{on $[0,T]\times]0,\infty[^{2}\times J$}\quad\text{and}\quad \tilde{\phi}\geq\phi\quad\text{entails}\quad u_{\tilde{B},\tilde{\phi}} \geq u_{\phi}.
\end{equation}
So, $u_{\phi}$ depends on an increasing way on the dividend function $\hat{\pi}$, the remuneration rate $\hat{c}_{-}$, the pre-default funding rate $\hat{f}_{-}$ and the pre-default hedging rate $\hat{h}_{+}$. At the same time, \emph{the comparison}~\eqref{eq:comparison} shows that $u_{\phi}$ depends decreasingly on $\hat{c}_{+}$, $\hat{f}_{+}$ and $\hat{h}_{-}$.

If~\eqref{eq:non-negativity condition} holds, then the dependence is monotonically increasing on the term $g_{\mathcal{C}}$. To draw the same conclusion for $g_{\mathcal{I}}$, the estimate $1-\beta \geq\mathrm{LGD}_{\mathcal{I}}(1-\alpha)$ has to be valid when the investor $\mathcal{I}$ is a bank. This, for instance, is the case whenever the fractions $\alpha$ and $\beta$ coincide.

Finally, to give conditions under which the mild solution $u_{\tilde{B},\tilde{\phi}}$ turns into a classical one, we use Theorem 2.4 in~\cite{BecSch05}. This leads to a local Lipschitz condition on the coefficients in~\eqref{eq:transformed SDE 2} and the determinant of the diffusion coefficient should not vanish:
\begin{enumerate}[label=(V.\arabic*), ref=V.\arabic*, leftmargin=\widthof{(V.10)} + \labelsep]
\setcounter{enumi}{9}
\item\label{con:v.10} $\hat{r}$ and $\rho$ are Lipschitz continuous, $\zeta$, $\eta$ and $\theta$ are locally Lipschitz continuous on $[0,T]\times ]0,\infty[$ and $|\eta|$ and $|\theta|$ are positive on $[0,T]\times ]0,\infty[$.
\end{enumerate}

Moreover, $\tilde{B}$ is required to be of affine growth and locally Lipschitz continuous in $y\in\mathbb{R}$, uniformly in $(t,s,v)\in [0,T]\times ]0,\infty[^{2}$. So, the functions $c_{\tilde{B}}$ and $\lambda_{\tilde{B},n}$ in~\eqref{eq:affine growth condition} and~\eqref{eq:Lipschitz condition on compact sets}, respectively, should be bounded for all $n\in\N$. According to the assumptions in~\cite{BecSch05}, the function $\tilde{B}$ has to be continuously differentiable as well. However, a short investigation of the proof therein shows that local H\oe lder continuity suffices.

In summary, if~\eqref{con:v.1}-\eqref{con:v.8} and~\eqref{con:v.10} hold, $\tilde{B}$ satisfies these conditions and $\tilde{\phi}$ is continuous, then~\cite{BecSch05} asserts that the terminal value problem~\eqref{eq:parabolic equation} with $(\tilde{B},\tilde{\phi})$ instead of $(\hat{B},\phi)$ admits a unique bounded classical solution. This function must be $u_{\tilde{B},\tilde{\phi}}$, as Theorem 2.15 in~\cite{Kal20} yields uniqueness among bounded mild solutions. To ensure that $\hat{B}$ meets the same requirements as $\tilde{B}$, we require the following condition:
\begin{enumerate}[label=(P.\arabic*), ref=P.\arabic*, leftmargin=\widthof{(P.6)} + \labelsep]
\setcounter{enumi}{5}
\item\label{con:p.6} $\hat{\pi}$ is bounded. Further, $\hat{\pi}$, the rates $\hat{r}$, $\hat{c}_{+},\hat{c}_{-}$, $\hat{f}_{+},\hat{f}_{-}$, $\hat{h}_{+}$, $\hat{h}_{-}$, the functions $g_{\mathcal{I}}$, $g_{\mathcal{C}}$, the fractions $\alpha$, $\beta$ and the hedging function $\hat{H}$ are locally H\oe lder continuous.
\end{enumerate}

Then Theorem~2.4 in~\cite{BecSch05} combined with Theorem~2.15 in~\cite{Kal20} yield \emph{sufficient conditions for the unique mild solution} of Proposition~\ref{pr:mild solution} \emph{to be classical}.

\begin{Corollary}\label{co:classical solution}
Let~\eqref{con:v.1}-\eqref{con:v.8},~\eqref{con:v.10} and~\eqref{con:p.1}-\eqref{con:p.6} hold and $\phi$ be bounded and continuous. Then $u_{\phi}$ is the unique bounded classical solution to~\eqref{eq:parabolic equation}.
\end{Corollary}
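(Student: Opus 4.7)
The plan is to verify that $\hat{B}$ and the coefficients of the transformed SDE~\eqref{eq:transformed SDE 2} satisfy the hypotheses of the H\oe lder version of Theorem~2.4 in~\cite{BecSch05}, which yields a bounded classical solution $v$ to the terminal value problem~\eqref{eq:parabolic equation}. Combining this with the observation that any bounded classical solution is also a bounded mild solution (under~\eqref{eq:affine growth condition}-\eqref{eq:Lipschitz condition on compact sets} and bounded continuous $\phi$) and with Proposition~\ref{pr:mild solution}, which asserts uniqueness of the bounded mild solution, we identify $v$ with $u_{\phi}$, thus obtaining existence, uniqueness and classical regularity in one stroke.

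First I would check the regularity inputs required by~\cite{BecSch05} for the SDE~\eqref{eq:transformed SDE 2} when $(b,\zeta)$ is replaced by $(\hat{r},\zeta - \gamma\hat{\eta}\hat{\theta})$. The drift $\big(\hat{r}(t) - \frac{1}{2}\theta(t,v)^{2},\,\zeta(t,v) - \gamma\hat{\eta}(t,v)\hat{\theta}(t,v)\big)$ and the diffusion matrix with rows $\theta(t,v)(\sqrt{1-\rho(t)^{2}},\rho(t))$ and $(0,\eta(t,v))$ are locally H\oe lder (in fact locally Lipschitz) continuous on $[0,T]\times\R\times ]0,\infty[$ thanks to~\eqref{con:v.10}, while the determinant $\theta(t,v)\eta(t,v)\sqrt{1-\rho(t)^{2}}$ stays strictly positive there by the positivity of $|\theta|,|\eta|$ in~\eqref{con:v.10} and the bound $\rho^{2} < 1$ from Section~\ref{se:4.1}. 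This provides the local ellipticity and regularity of the linear operator $\partial_{t} + \mathscr{L}_{\hat{r},\zeta-\gamma\hat{\eta}\hat{\theta}}$ needed in~\cite{BecSch05}.

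Next I would verify that $\hat{B}$, given explicitly by~\eqref{eq:inhomogeneity function}, satisfies the affine growth bound~\eqref{eq:affine growth condition} and the local Lipschitz bound~\eqref{eq:Lipschitz condition on compact sets} in $y\in\R$ with \emph{bounded} constants $c_{\tilde{B}}$ and $\lambda_{\tilde{B},n}$, so that the uniqueness result from Proposition~\ref{pr:mild solution} still applies to $\hat{B}$. This follows from the boundedness of $\hat{\pi}$, of the deterministic rates $\hat{r},\hat{c}_{\pm},\hat{f}_{\pm},\hat{h}_{\pm}$, of the fractions $\alpha,\beta$ and of the functions $g_{\mathcal{I}},g_{\mathcal{C}}$ granted by~\eqref{con:p.6}, combined with the global Lipschitz behaviour of $y\mapsto y^{\pm}$ and the affine growth and local Lipschitz assumptions on $\hat{H}$ from~\eqref{con:p.5}. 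The delicate point is to show that $\hat{B}$ is \emph{locally H\oe lder continuous} on $[0,T]\times ]0,\infty[^{2}\times\R$, which I would deduce from~\eqref{con:p.6} together with the elementary facts that $y\mapsto y^{\pm}$ is globally Lipschitz and that absolute values, positive parts and products of locally H\oe lder functions remain locally H\oe lder. This bookkeeping in the presence of the $(\cdot)^{\pm}$ terms and the indicator $\mathbbm{1}_{\{\mathcal{I}=\mathcal{B}\}}$ is the main technical obstacle, but no genuine obstruction arises because each building block retains its regularity under these operations.

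Finally, I would invoke the H\oe lder version of Theorem~2.4 in~\cite{BecSch05}, as announced in the text preceding the statement, to obtain a bounded classical solution $v\in C^{1,2,2}([0,T[\times\R\times ]0,\infty[)$ of~\eqref{eq:parabolic equation} with $v(T,\cdot,\cdot) = \phi(\exp(\cdot),\cdot)$. Since~\eqref{eq:affine growth condition} and~\eqref{eq:Lipschitz condition on compact sets} hold for $\hat{B}$ and $\phi(\exp(\cdot),\cdot)$ is bounded and continuous, the Feynman-Kac type argument recalled just after the definition of mild solutions (and worked out in Section~2.4 of~\cite{KalSch16}) implies that $v$ is also a bounded mild solution of~\eqref{eq:parabolic equation}. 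Uniqueness of bounded mild solutions, granted by Proposition~\ref{pr:mild solution}, then forces $v = u_{\phi}$, which proves that $u_{\phi}$ is indeed the unique bounded classical solution.
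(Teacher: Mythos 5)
Your proposal is correct and follows essentially the same route as the paper: verify the coefficient hypotheses coming from~\eqref{con:v.10} and the growth, local Lipschitz and local H\oe lder properties of $\hat{B}$ coming from~\eqref{con:p.5}--\eqref{con:p.6}, invoke the H\oe lder version of Theorem~2.4 in~\cite{BecSch05} to get a bounded classical solution, and identify it with $u_{\phi}$ through uniqueness of bounded mild solutions. The paper's own proof is the same argument stated more tersely, with the identification step $v = u_{\phi}$ delegated to the discussion preceding the corollary rather than spelled out inside the proof.
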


\section{Proofs for the preliminary results and the market model}\label{se:5}

\subsection{Proofs for the representations of conditional expectations}

\begin{proof}[Proof of Lemma~\ref{le:essential identity}]
For $\tilde{A}\in\tilde{\mathscr{F}}_{s}$ there is $A\in\mathscr{F}_{s}$ such that $\{\rho > s\}\cap A = \{\rho > s\}\cap \tilde{A}$. Thus, the properties of conditional expectation yield that
\begin{align*}
E\big[X\mathbbm{1}_{\{\rho > t\}}P(\rho > s|\mathscr{F}_{s})\mathbbm{1}_{\tilde{A}}\big] &= E\big[E[X\mathbbm{1}_{\{\rho > t\}}|\mathscr{F}_{s}]P(\rho > s|\mathscr{F}_{s})\mathbbm{1}_{A}\big] \\
&= E\big[E[X\mathbbm{1}_{\{\rho > t\}}|\mathscr{F}_{s}]\mathbbm{1}_{\{\rho > s\}\cap\tilde{A}}\big].
\end{align*}
This implies the assertion.
\end{proof}

\begin{proof}[Proof of Corollary~\ref{co:identification}]
If $X = \tilde{X}$ a.s.~on $\{\rho > t\}$, then the $\mathscr{F}_{t}$-measurability of $X$ yields that $X G_{t}(\rho) = E[X\mathbbm{1}_{\{\rho > t\}}|\mathscr{F}_{t}] = E[\tilde{X}\mathbbm{1}_{\{\rho > t\}}|\mathscr{F}_{t}]$ a.s. 

Conversely, suppose that~\eqref{eq:conditional relation} holds. Then $\tilde{X}P(\rho > t|\tilde{\mathscr{F}}_{t})= XP(\rho > t|\tilde{\mathscr{F}_{t}})$ a.s.~on $\{G_{t}(\rho) > 0\}$, by Lemma~\ref{le:essential identity}. Thus,
\begin{equation*}
E[\tilde{X}\mathbbm{1}_{\{\rho > t\}\cap\{G_{t}(\rho) > 0\}\cap\tilde{A}}] = E[\tilde{X}P(\rho > t|\tilde{\mathscr{F}}_{t})\mathbbm{1}_{\{G_{t}(\rho) > 0\}\cap\tilde{A}}] = E[X\mathbbm{1}_{\{\rho > t\}\cap \{G_{t}(\rho) > 0\}\cap\tilde{A}}]
\end{equation*}
for any $\tilde{A}\in\tilde{\mathscr{F}}_{t}$. We first choose $\tilde{A}=\{n\geq \tilde{X} > X\}$ and then $\tilde{A}=\{\tilde{X} \leq X \leq n\}$ in this identity for each $n\in\N$ to infer that $X = \tilde{X}$ a.s.~on $\{\rho > t\}$, since
\begin{equation*}
P(\{\rho > t\}\cap \{G_{t}(\rho) = 0\}) = E[G_{t}(\rho)\mathbbm{1}_{\{G_{t}(\rho) = 0\}}] = 0.
\end{equation*}
\end{proof}

\begin{proof}[Proof of Lemma~\ref{le:stopped integral}]
Since $E[\tilde{X}_{t}\mathbbm{1}_{\{\rho > t\}}|\mathscr{F}_{t}] = X_{t}G_{t}(\rho)$ a.s.~for every $t\in [s,T]$, Fubini's theorem directly yields that
\begin{equation*}
E\bigg[\int_{s}^{T\wedge\rho} \tilde{X}_{t}\,dt\,\mathbbm{1}_{A}\bigg] = \int_{s}^{T}E\big[\tilde{X}_{t}\mathbbm{1}_{\{\rho > t\}}\mathbbm{1}_{A}\big]\,dt  = E\bigg[\int_{s}^{T}X_{t}G_{t}(\rho)\,dt\,\mathbbm{1}_{A}\bigg]
\end{equation*}
for each $A\in\mathscr{F}_{s}$. Thus, the claim holds.
\end{proof}

\begin{proof}[Proof of Lemma~\ref{le:conditional distribution}]
The system $\mathscr{E}$ of all sets $]s,\tilde{s}]\times A$, where $s,\tilde{s}\in [0,t]$ satisfy $s\leq \tilde{s}$ and $A\in\{\emptyset,\{\infty\}\}$, is an $\cap$-stable generator of $\mathscr{B}([0,t]\cup\{\infty\})$ and we readily check that
\begin{align*}
P(s_{1} < \sigma\leq t_{1},s_{2} < \tau \leq t_{2}|\mathscr{F}_{t}) &= P(s_{1} < \sigma\leq t_{1}|\mathscr{F}_{t}) P(s_{2} < \tau\leq t_{2}|\mathscr{F}_{t})\quad\text{a.s.}
\end{align*}
for any $s_{1},t_{1},s_{2},t_{2}\in [0,t]\cup\{\infty\}$ with $s_{1}\leq t_{1}$ and $s_{2}\leq t_{2}$. In particular, the $d$-system of all $C\in\mathscr{B}(([0,t]\cup\{\infty\})^{2})$ for which~\eqref{eq:conditional distribution} holds includes $\mathscr{E}\times\mathscr{E}$. Hence, the claim follows from the monotone class theorem.
\end{proof}

\begin{proof}[Proof of Proposition~\ref{pr:conditional integral representation}]
For fixed $\tilde{s}\in ]s,T[$ and every $n\in\N$ let $\mathbb{T}_{n}$ be a partition of $[\tilde{s},T]$ of the form $\mathbb{T}_{n}=\{t_{0,n},\dots,t_{k_{n},n}\}$ for some $k_{n}\in\N$ and $t_{0,n},\dots,t_{k_{n},n}\in [\tilde{s},T]$ with $\tilde{s} = t_{0,n} < \cdots < t_{k_{n},n} = T$. We denote its mesh by
\begin{equation*}
|\mathbb{T}_{n}| = \max_{i\in\{0,\dots,k_{n}-1\}} (t_{i+1,n} - t_{i,n})
\end{equation*}
 and assume that the resulting sequence $(\mathbb{T}_{n})_{n\in\N}$ is refining, which means that $\mathbb{T}_{n}\subset\mathbb{T}_{n+1}$ for all $n\in\N$, and satisfies $
\lim_{n\uparrow\infty} |\mathbb{T}_{n}| = 0$. Then the sequences $(\null_{n}X)_{n\in\mathbb{N}}$ and $(\null_{n}G(\tau))_{n\in\mathbb{N}}$ of left-continuous $(\mathscr{F}_{t})_{t\in [0,T]}$-adapted processes defined via
\begin{equation*}
\null_{n}X_{t} := \sum_{i=0}^{k_{n}-1} X_{t_{i,n}}\mathbbm{1}_{]t_{i,n},t_{i+1,n}]}(t)
\quad\text{and}\quad \null_{n}G_{t}(\tau):= \sum_{i=0}^{k_{n}-1} G_{t_{i,n}}(\tau)\mathbbm{1}_{]t_{i,n},t_{i+1,n}]}(t)
\end{equation*}
satisfy $\lim_{n\uparrow\infty} \null_{n}X_{t}(\omega) = X_{t}(\omega)$ and $\lim_{n\uparrow\infty} \null_{n}G_{t}(\tau)(\omega) = G_{t}(\tau)(\omega)$ for all $(t,\omega)\in ]\tilde{s},T]\times\Omega$ for which $X(\omega)$ and $G(\tau)(\omega)$ are left-continuous at $t$. For the decreasing sequence $(\tau_{n})_{n\in\mathbb{N}}$ of $[0,T]\cup\{\infty\}$-valued random variables given by
\begin{equation*}
\tau_{n}(\omega) := \sum_{i=0}^{k_{n}-1}t_{i+1,n}\mathbbm{1}_{\{t_{i,n} < \tau \leq t_{i+1,n}\}}(\omega),\quad\text{if $\tau(\omega) < \infty$,}
\end{equation*}
and $\tau_{n}(\omega):= \infty$, if $\tau(\omega) = \infty$, we have $\inf_{n\in\mathbb{N}}\tau_{n} = \tau$ on $\{\tilde{s} < \tau\}$. For given $n\in\mathbb{N}$ we define a left-continuous $(\tilde{\mathscr{F}}_{t})_{t\in [0,T]}$-adapted process $\null_{n}\tilde{X}$ by using the definition of $\null_{n}X$ when $X$ is replaced by $\tilde{X}$ and compute that
\begin{align*}
E[\null_{n}\tilde{X}_{T}^{\sigma}\mathbbm{1}_{\{\tilde{s} < \sigma\leq T\wedge\tau_{n}\}}|\mathscr{F}_{s}] &= \sum_{i=0}^{k_{n}-1}E[X_{t_{i,n}}P(t_{i,n} < \sigma\leq t_{i+1,n},\sigma \leq \tau_{n}|\mathscr{F}_{t_{i,n}})|\mathscr{F}_{s}]\\
&= - E\bigg[\int_{]\tilde{s},T]}\null_{n}X_{t}\,\null_{n}G_{t}(\tau)\,dG_{t}(\sigma)\,\bigg|\,\mathscr{F}_{s}\bigg]\quad\text{a.s.}
\end{align*}
Indeed, the $(\mathscr{F}_{t})_{t\in [0,T]}$-conditional independence of $\sigma$ and $\tau$ gives
\begin{align*}
P(t_{i,n} < \sigma\leq t_{i+1,n}, \sigma \leq \tau_{n}|\mathscr{F}_{t_{i,n}}) &= P(t_{i,n} < \sigma \leq t_{i+1,n}, \tau = \infty|\mathscr{F}_{t_{i,n}})\\
&\quad + \sum_{j=i}^{k_{n}-1} P(t_{i,n} < \sigma\leq t_{i+1,n}, t_{j,n} < \tau\leq t_{j+1,n}|\mathscr{F}_{t_{i,n}})\\
&= - E[G_{t_{i,n}}(\tau)(G_{t_{i+1,n}}(\sigma) - G_{t_{i,n}}(\sigma))|\mathscr{F}_{t_{i,n}}]\quad\text{a.s.}
\end{align*}
for each $i\in\{0,\dots,k_{n}-1\}$. By construction, $|\null_{n}X_{t}|\leq \sup_{\tilde{t}\in ]s,T]}|X_{\tilde{t}}|$ for every $n\in\mathbb{N}$ and all $t\in ]\tilde{s},T]$. Therefore, dominated convergence yields that
\begin{equation*}
\lim_{n\uparrow\infty} \int_{]\tilde{s},T]}\null_{n}X_{t}\,\null_{n}G_{t}(\tau)\,dG_{t}(\sigma) = \int_{]\tilde{s},T]} X_{t}G_{t}(\tau)\,dG_{t}(\sigma).
\end{equation*}
Since $|\int_{]\tilde{s},T]}\null_{n}X_{t}\,\null_{n}G_{t}(\tau)\,dG_{t}(\sigma)|$ does not exceed $\sup_{t\in ]s,T]}|X_{t}|G_{t}(\tau)(V_{T}(\sigma)-V_{s}(\sigma))$ for each $n\in\mathbb{N}$, dominated convergence also implies that
\begin{equation*}
E[\tilde{X}_{T}^{\sigma}\mathbbm{1}_{\{\tilde{s} < \sigma\leq T\wedge\tau\}}|\mathscr{F}_{s}] = - E\bigg[\int_{]\tilde{s},T]} X_{t}G_{t}(\tau)\,dG_{t}(\sigma)\,\bigg|\,\mathscr{F}_{s}\bigg]\quad\text{a.s.}
\end{equation*}
Finally, for any sequence $(s_{n})_{n\in\mathbb{N}}$ in $]s,T[$ that converges to $s$, we have $\lim_{n\uparrow\infty} \mathbbm{1}_{]s_{n},T]}(\sigma)$ $= \mathbbm{1}_{]s,T]}(\sigma)$ and $\lim_{n\uparrow\infty} \int_{]s,s_{n}]}X_{t}G_{t}(\tau)\,dG_{t}(\sigma) = 0$. Hence, the claim follows from a final application of the Dominated Convergence Theorem.
\end{proof}

\subsection{Proofs for the conditionally independent hitting times}

\begin{proof}[Proof of Lemma~\ref{le:hitting time}]
For any $\omega\in\Omega$ we have $\tau_{j}(\omega)\leq t$ if and only if $X_{t}^{(j)}(\omega)\geq \xi_{j}(\omega)$, since the increasing function $X^{(j)}(\omega)$ is right-continuous. Hence, $\{\tau_{j} \leq t\}$ coincides with $\{\xi_{j} \leq X_{t}^{(j)}\}$ and lies in $\tilde{\mathscr{F}}_{t}$.

Further, this entails that $\{\tau_{1} > s_{1},\dots,\tau_{j} > s_{j}\} = \{\xi_{1} > X_{s_{1}}^{(1)},\dots,\xi_{j} > X_{s_{j}}^{(j)}\}$. For this reason, the independence of $\xi$ and $\mathscr{F}_{T}$ and the independence of $\xi_{1},\dots,\xi_{n}$ yield that
\begin{align*}
P(\tau_{1} > s_{1},\dots \tau_{j} > s_{j}|\mathscr{F}_{t}) &= P(\xi_{1} > x_{1},\dots,\xi_{j} > x_{j})_{|(x_{1},\dots,x_{n}) = (X_{s_{1}}^{(1)},\dots,X_{s_{j}}^{(j)})}\\
&= G_{1}(X_{s_{1}}^{(1)})\cdots G_{n}(X_{s_{j}}^{(j)})
\quad\text{a.s.}
\end{align*}
These considerations imply all the assertions.
\end{proof}

\begin{proof}[Proof of Lemma~\ref{le:minimal hitting time}]
(i) By~\eqref{eq:hitting time conditional distribution}, we have $G_{s}(\rho) > 0$ a.s.~if and only if $G_{i}(X_{s}^{(i)}) > 0$ a.s.~for any $i\in\{1,\dots,m\}$. Therefore, the definition of the essential supremum implies the claim. 

(ii) From the representation $P(\rho > s) = E[ \prod_{i=1}^{m}G_{i}(X_{s}^{(i)})]$ we infer that $P(\rho > s) = 1$ if and only if $G_{i}(X_{s}^{(i)}) = 1$ a.s.~for all $i\in\{1,\dots,m\}$. In addition, $P(\rho > s) = 0$ if only if $G_{i}(X_{s}^{(i)}) = 0$ for some $i\in\{1,\dots,m\}$ a.s., which yields the assertions.

(iii) Let $(s_{n})_{n\in\N}$ be an increasing sequence in $[0,s[$ that converges to $s$. Then the $\sigma$-continuity of probability measures and dominated convergence lead to
\begin{equation*}
P(\rho \geq s) = \lim_{n\uparrow\infty} P(\rho > s_{n}) = \lim_{n\uparrow\infty} E\bigg[\prod_{i=1}^{m}G_{i}(X_{s_{n}}^{(i)})\bigg] = E\bigg[\prod_{i=1}^{m}G_{i}(X_{s}^{(i)})\bigg] = P(\rho > s),
\end{equation*}
since $\lim_{n\uparrow\infty} G_{i}(X_{s_{n}}^{(i)}) = G_{i}(X_{s}^{(i)})$ a.s.~for each $i\in\{1,\dots,m\}$, due to the a.s.~left-continuity of $G_{i}(X^{(i)})$. Hence, $P(\rho = s) = P(\rho\geq s) - P(\rho > s) = 0$.
\end{proof}

\begin{proof}[Proof of Proposition~\ref{pr:density formula}]
Since $P(\rho > 0) = E[G_{0}(\rho)] = G_{1}(\hat{x}_{1})\cdots G_{m}(\hat{x}_{m})$ and $\Lambda_{0} = \Omega$, we merely need to check the asserted formula for $t > 0$.

For this purpose, we define an $]0,1[$-valued continuously differentiable function $\varphi$ on $]a_{1},b_{1}[\times\cdots\times]a_{m},b_{m}[$ by $\varphi(x) := \prod_{i=1}^{m} G_{i}(x_{i})$ and observe that the path
\begin{equation*}
]0,t]\rightarrow [0,\infty[^{m},\quad s\mapsto X_{s}(\omega)
\end{equation*}
is absolutely continuous and takes all its values in $]a_{1},b_{1}[\times\cdots\times ]a_{m},b_{m}[$ for each $\omega\in\Lambda_{t}$, as $a_{i}\leq \hat{x}_{i} < X_{s}^{(i)}(\omega)\leq X_{t}^{(i)}(\omega) < b_{i}$ for every $s\in ]0,t]$ and all $i\in\{1,\dots,m\}$. Thus,
\begin{align*}
\varphi(X_{s}) - \varphi(X_{t}) = - \sum_{i=1}^{m}\int_{s}^{t}\frac{\partial\varphi}{\partial x_{i}}(X_{\tilde{s}})\,dX_{\tilde{s}}^{(i)} = -\int_{s}^{t}\varphi(X_{\tilde{s}})\sum_{i=1}^{m}\lambda_{\tilde{s}}^{(i)}\bigg(\frac{G_{i}'}{G_{i}}\bigg)(X_{\tilde{s}}^{(i)})\,d\tilde{s}
\end{align*}
on $\Lambda_{t}$, by the Fundamental Theorem of Calculus for Lebesgue-Stieltjes integrals. Now we take expectations and apply Fubini's theorem to the effect that
\begin{equation}\label{eq:auxiliary density formula}
E[\varphi(X_{s});\Lambda_{t}] = P(\rho > t) - \int_{s}^{t}E\bigg[\varphi(X_{\tilde{s}})\sum_{i=1}^{m}\lambda_{\tilde{s}}^{(i)}\bigg(\frac{G_{i}'}{G_{i}}\bigg)(X_{\tilde{s}}^{(i)});\Lambda_{t}\bigg]\,d\tilde{s}.
\end{equation}
Thereby, we used that $E[\varphi(X_{t});\Lambda_{t}] = E[G_{t}(\rho)\mathbbm{1}_{\Lambda_{t}}] = P(\rho > t)$. From the right-continuity of $G_{1},\dots,G_{m}$ and monotone convergence we infer that
\begin{equation*}
G_{1}(\hat{x}_{1})\cdots G_{m}(\hat{x}_{m})P(\Lambda_{t}) = \lim_{n\uparrow\infty}E[\varphi(X_{s_{n}});\Lambda_{t}]
\end{equation*}
for any decreasing zero sequence $(s_{n})_{n\in\N}$ in $]0,t]$. Finally, we replace $s$ by $s_{n}$ in~\eqref{eq:auxiliary density formula} for each $n\in\N$ to deduce the claim from monotone convergence.
\end{proof}

\subsection{Proofs for the market model with default}\label{se:5.3}

\begin{proof}[Proof of Proposition~\ref{pr:pre-default valuation}]
Because~\eqref{eq:con.2} is equivalent to~\eqref{con:c.2}, Lemma~\ref{le:stopped integral} entails for the contractual cash flows and the collateral, funding and hedging costs and benefits that
\begin{align*}
\tilde{E}\big[\null_{\mathrm{con}}\mathrm{CF}_{s} &- \null_{\mathrm{col}}\mathrm{C}_{s}(\mathscr{V}) - \null_{\mathrm{fun}}\mathrm{C}_{s}(\tilde{\mathscr{V}}) - \null_{\mathrm{hed}}\mathrm{C}_{s}(\tilde{\mathscr{V}})\big|\mathscr{F}_{s}\big]\\
&= \tilde{E}\bigg[D_{s,T}(r)\Phi(S,V)G_{T}(\tau) + \int_{s}^{T}D_{s,t}(r)\null_{0}\B_{t}(\mathscr{V})G_{t}(\tau)\,dt\,\bigg|\,\mathscr{F}_{s}\bigg]\quad\text{a.s.}
\end{align*}
Now we consider the cash flows $\null_{\mathrm{def}}\mathrm{CF}(\mathscr{V},\tilde{\mathscr{V}})$ on default, given by~\eqref{eq:cash flows and costs 5}, and note that $\{\tau_{i} < \tau_{j}\}\cap\{s < \tau < T\}$ $= \{s < \tau_{i} < \tau_{j}\wedge T\}$ for both $i,j\in\{\mathcal{I},\mathcal{C}\}$ with $i\neq j$. By~\eqref{eq:con.3} and~\eqref{con:c.3}, we obtain
\begin{align*}
\tilde{E}\big[\null_{\mathrm{def}}\mathrm{CF}_{s}(\mathscr{V},\tilde{\mathscr{V}})\big|\mathscr{F}_{s}\big] &= - \tilde{E}\bigg[\int_{s}^{T}D_{s,t}(r)\null_{\mathcal{I}}\B_{t}(\mathscr{V})G_{t}(\tau_{\mathcal{C}})\,dG_{t}(\tau_{\mathcal{I}})\,\bigg|\,\mathscr{F}_{s}\bigg]\\
&\quad  - \tilde{E}\bigg[\int_{s}^{T}D_{s,t}(r)\null_{\mathcal{C}}\B_{t}(\mathscr{V})G_{t}(\tau_{\mathcal{I}})\,dG_{t}(\tau_{\mathcal{C}})\,\bigg|\,\mathscr{F}_{s}\bigg]\quad\text{a.s.}
\end{align*}
from two applications of Proposition~\ref{pr:conditional integral representation}, since~\eqref{eq:market condition 2} ensures $\tau_{\mathcal{I}} \neq \tau_{\mathcal{C}}$ a.s.~on $\{\tau < \infty\}$. This shows the assertion.
\end{proof}

\begin{proof}[Proof of Proposition~\ref{pr:martingale characterisation}]
According to our considerations preceding Proposition~\ref{pr:pre-default valuation}, it follows immediately from~\eqref{con:c.2} and~\eqref{con:c.3} that the integral
\begin{equation*}
\int_{0}^{T}D_{0,t}(r)\big(|\null_{0}\B_{t}(\mathscr{V})|G_{t}(\tau)\,dt + |\null_{\mathcal{I}}\B_{t}(\mathscr{V})|G_{t}(\tau_{\mathcal{C}})\,dV_{t}(\tau_{\mathcal{I}}) + |\null_{\mathcal{C}}\B_{t}(\mathscr{V})|G_{t}(\tau_{\mathcal{I}})\,dV_{t}(\tau_{\mathcal{C}})\big),
\end{equation*}
which bounds $\int_{0}^{t}D_{0,s}(r)\,dA_{s}(\mathscr{V})$ for any $t\in [0,T]$, is $\tilde{P}$-integrable. This justifies that the process in~\eqref{eq:finite variation process 2} is indeed integrable and we may turn to the second assertion. 

For only if we observe that $D_{0,s}(\tau)\mathscr{V}_{s}G_{s}(\tau)$ and hence, $\null_{\mathscr{V}}M_{s}$ is integrable for fixed $s\in [t_{0},T]$. Indeed, we get
\begin{equation*}
\tilde{E}\big[D_{0,s}(r)|\mathscr{V}_{s}|G_{s}(\tau)\big] \leq \tilde{E}\bigg[\bigg|D_{0,T}(r)\Phi(S,V)G_{T}(\tau) + \int_{s}^{T}D_{0,t}(r)\,dA_{t}(\mathscr{V})\bigg|\bigg] < \infty,
\end{equation*}
by using that $D_{0,s}(r)D_{s,t}(r) = D_{0,t}(r)$ for any $t\in [s,T]$. In addition, the $\mathscr{F}_{s}$-measurability of $\int_{0}^{s}D_{0,\tilde{s}}(r)\,dA_{\tilde{s}}(\mathscr{V})$ yields that
\begin{align*}
\null_{\mathscr{V}}M_{s} &= D_{0,s}(r)\mathscr{V}_{s}G_{s}(\tau) + \int_{0}^{s}D_{0,\tilde{s}}(r)\,dA_{\tilde{s}}(\mathscr{V})\\
&= \tilde{E}\bigg[D_{0,T}(r)\Phi(S,V)G_{T}(\tau) + \int_{0}^{T}D_{0,t}(r)\,dA_{t}(\mathscr{V})\,\bigg|\,\mathscr{F}_{s}\bigg] = \tilde{E}\big[\null_{\mathscr{V}}M_{T}|\mathscr{F}_{s}\big]\quad\text{a.s.},
\end{align*}
which implies the martingale property of $\null_{\mathscr{V}}M$ relative to $(\mathscr{F}_{t})_{t\in [t_{0},T]}$. For if the integrability of $\null_{\mathscr{V}}M_{s}$ entails that of $D_{0,s}(r)\mathscr{V}_{s}G_{s}(\tau)$ and we have
\begin{equation}\label{eq:auxiliary conditional representation}
D_{0,s}(r)\mathscr{V}_{s}G_{s}(\tau) = \tilde{E}\bigg[D_{0,T}(r)\mathscr{V}_{T}G_{T}(\tau) + \int_{s}^{T}D_{0,t}(r)\, dA_{t}(\mathscr{V})\bigg|\mathscr{F}_{s}\bigg]\quad\text{a.s.},
\end{equation}
because $\null_{\mathscr{V}}M_{s} = \tilde{E}[\null_{\mathscr{V}}M_{T}|\mathscr{F}_{s}]$ a.s. Consequently, it holds that 
\begin{equation*}
\tilde{E}\big[|\mathscr{V}_{s}|G_{s}(\tau)\big] \leq \tilde{E}\bigg[\bigg|D_{s,T}(r)\Phi(S,V)G_{T}(\tau) + \int_{s}^{T}D_{s,t}(r)\,dA_{t}(\mathscr{V})\bigg|\bigg] < \infty.
\end{equation*}
This allows us to multiplicate both sides in~\eqref{eq:auxiliary conditional representation} with $D_{0,s}(-r)$ and, since $\mathscr{V}_{T}G_{T}(\tau)$ may be replaced by $\Phi(S,V)G_{T}(\tau)$, we see that $\mathscr{V}$ solves~\eqref{eq:pre-default valuation}.
\end{proof}

\begin{proof}[Proof of Proposition~\ref{pr:backward stochastic representation}]
We recall from~\eqref{eq:finite variation process} that the continuous process~\eqref{eq:finite variation process 2} is of finite variation, just as the process $[0,T]\times\Omega\rightarrow ]0,\infty[$, $(t,\omega)\mapsto D_{0,t}(r)(\omega)$. Hence, the first claim follows directly from It{\^o}'s formula.

Let us now assume that $\null_{\mathscr{V}}M$ is a continuous $(\mathscr{F}_{t})_{t\in [t_{0},T]}$-semimartingale and choose $t\in [t_{0},T]$. Then from It{\^o}'s product rule we infer that
\begin{align*}
\mathscr{V}_{t}G_{t}(\tau) - \mathscr{V}_{s}G_{s}(\tau) = - \int_{s}^{t}\,\big(dA_{\tilde{s}}(\mathscr{V}) - r_{\tilde{s}}\mathscr{V}_{\tilde{s}}G_{\tilde{s}}(\tau)\,d\tilde{s}\big) + \int_{s}^{t}D_{0,\tilde{s}}(-r)\,d\null_{\mathscr{V}}M_{\tilde{s}}
\end{align*}
for all $s\in [t_{0},t]$ a.s., which gives the first identity. In the case that $G(\tau) > 0$ It{\^o}'s product rule also shows that $\mathscr{V}$ is a continuous $(\mathscr{F}_{t})_{t\in [t_{0},T]}$-semimartingale and
\begin{equation*}
\mathscr{V}_{t} - \mathscr{V}_{s} = \int_{s}^{t}\frac{1}{G_{\tilde{s}}(\tau)}\,d\mathscr{V}_{\tilde{s}}G_{\tilde{s}}(\tau) - \int_{s}^{t}\frac{\mathscr{V}_{\tilde{s}}}{G_{\tilde{s}}(\tau)}\,dG_{\tilde{s}}(\tau)
\end{equation*}
for every $s\in [t_{0},t]$ a.s. Thus, the second identity~\eqref{eq:backward stochastic representation} follows from the first~\eqref{eq:auxiliary backward integral representation} and the definition of $A$ in~\eqref{eq:finite variation process}.

Finally, suppose there is a continuous $(\mathscr{F}_{t})_{t\in [t_{0},T]}$-semimartingale $M$ such that~\eqref{eq:backward stochastic representation} holds when $\null_{\mathscr{V}}M$ is replaced by $M$. Then from It{\^o}'s formula we obtain that
\begin{align*}
\null_{\mathscr{V}}M_{t} - \null_{\mathscr{V}}M_{s} &=  - \int_{s}^{t}D_{0,\tilde{s}}(r)G_{\tilde{s}}(\tau)\mathscr{V}_{\tilde{s}}\bigg(r_{\tilde{s}}\,d\tilde{s} - \frac{1}{G_{\tilde{s}}(\tau_{\mathcal{I}})}\,dG_{\tilde{s}}(\tau_{\mathcal{I}}) - \frac{1}{G_{\tilde{s}}(\tau_{\mathcal{I}})}\,dG_{\tilde{s}}(\tau_{\mathcal{C}})\bigg)\\
&\quad + \int_{s}^{t}D_{0,\tilde{s}}(r)G_{\tilde{s}}(\tau)\bigg(d\mathscr{V}_{\tilde{s}} + \frac{1}{G_{\tilde{s}}(\tau)}dA_{\tilde{s}}(\mathscr{V})\bigg) = M_{t} - M_{s}
\end{align*}
for each $s\in [t_{0},t]$ a.s. This shows that if $M_{t_{1}} = \null_{\mathscr{V}}M_{t_{1}}$ a.s., then $M$ and $\null_{\mathscr{V}}M$ must be indistinguishable, as claimed.
\end{proof}

\begin{proof}[Proof of Corollary~\ref{co:semimartingale characterisation}]
For only if $D_{0,t_{0}}(r)\mathscr{V}_{t_{0}}G_{t_{0}}(\tau)$ is integrable, since the process~\eqref{eq:finite variation process 2} satisfies this property and $\null_{\mathscr{V}}M$ is a continuous $(\mathscr{F}_{t})_{t\in [t_{0},T]}$-martingale, by Proposition~\ref{pr:martingale characterisation}.
Moreover, Proposition~\ref{pr:backward stochastic representation} states that~\eqref{eq:backward stochastic representation} is valid for all $s\in [t_{0},T]$ a.s.

For if $\mathscr{V}$ is an $(\mathscr{F}_{t})_{t\in [t_{0},T]}$-semimartingale, since~\eqref{eq:backward stochastic representation} holds for any $s\in [t_{0},T]$ a.s.~when $\null_{\mathscr{V}}M$ is replaced by the $(\mathscr{F}_{t})_{t\in [t_{0},T]}$-martingale $M - M_{t_{0}} + \null_{\mathscr{V}}M_{t_{0}}$. Hence, the uniqueness assertion of Proposition~\ref{pr:backward stochastic representation} yields $M_{t} - M_{t_{0}} = \null_{\mathscr{V}}M_{t} - \null_{\mathscr{V}}M_{t_{0}}$ for each $t\in [t_{0},T]$ a.s.~and Proposition~\ref{pr:martingale characterisation} shows that $\mathscr{V}$ solves~\eqref{eq:pre-default valuation}.
\end{proof}

\section{Proofs for the volatility model and the valuation PDE}\label{se:6}

\subsection{Proofs for the price process and its quasi variance}

\begin{proof}[Proof of Lemma \ref{le:price process representation}]
Regarding uniqueness, suppose that $S$ and $\tilde{S}$ are two solutions to the first SDE in~\eqref{eq:stochastic volatility model} with $S_{t_{0}} = \tilde{S}_{t_{0}}$ a.s. For fixed $n\in\mathbb{N}$ there is $k_{\theta}\in\mathscr{L}^{2}(\R_{+})$ such that $|\theta(\cdot,v)|\leq k_{\theta}$ for any $v\in [1/n,n]$ a.e. Thus, It{\^o}'s formula yields that
\begin{equation*}
E\big[|S_{t}^{\tau_{n}} - \tilde{S}_{t}^{\tau_{n}}|^{2}\big] \leq \int_{t_{0}}^{t}(2b^{+} + k_{\theta}^{2})(s) E\big[|S_{s}^{\tau_{n}} - \tilde{S}_{s}^{\tau_{n}}|^{2}\big]\,ds\quad\text{for all $t\in [t_{0},T]$}
\end{equation*}
and the stopping time $\tau_{n}:=\inf\{t\in [t_{0},T]\,|\, V_{t}\notin ]1/n,n[\text{ or } |S_{t}|\vee |\tilde{S}_{t}|\geq n\}$. By Gronwall's inequality and $\sup_{n\in\mathbb{N}} \tau_{n} = \infty$, the continuous processes $S$ and $\tilde{S}$ are indistinguishable.

Regarding existence and the claimed representation, note that $\int_{t_{0}}^{T}\theta(s,V_{s})^{2}\,ds < \infty$, as $V([t_{0},T]\times\{\omega\})$ is compact in $]0,\infty[$ for any $\omega\in\Omega$. For this reason, the stochastic and Lebesgue integrals in~\eqref{eq:stochastic exponential} are well-defined. Hence, if $S$ is an adapted continuous process for which~\eqref{eq:stochastic exponential} holds, then It{\^o}'s formula shows that it solves the first SDE in~\eqref{eq:stochastic volatility model}.

The second claim is a direct consequence of Novikov's condition, which entails that the continuous local martingale $\exp(\int_{t_{0}}^{\cdot}\theta(s,V_{s})\,d\hat{W}_{s} - \frac{1}{2}\int_{t_{0}}^{\cdot}\theta(s,V_{s})^{2}\,ds)$ is a martingale and hence,
\begin{equation*}
E[S_{t}]e^{-\int_{t_{0}}^{t}b(s)\,ds} = E\bigg[\chi E\bigg[\exp\bigg(\int_{t_{0}}^{t}\theta(s,V_{s})\,d\hat{W}_{s} - \frac{1}{2}\int_{t_{0}}^{t}\theta(s,V_{s})^{2}\,ds\bigg)\bigg|\mathscr{F}_{t_{0}}\bigg]\bigg] = E[\chi]
\end{equation*}
for all $t\in [t_{0},T]$. Thereby, we used the fact that $S_{t}$ is integrable, which follows from the same reasoning if  $\chi$ is split into its positive and negative part.
\end{proof}

\begin{proof}[Proof of Lemma~\ref{le:log-price comparison estimate}]
By using the sublinear growth and H\oe lder condition for $\theta$, we notice that $|\theta(\cdot,v)^{2} - \theta(\cdot,\tilde{v})^{2}| \leq \big(2 k_{\theta} + \lambda_{\theta}(v^{1/2}+ \tilde{v}^{1/2})\big)\lambda_{\theta}|v-\tilde{v}|^{1/2}$ for any $v,\tilde{v} > 0$ a.e. Hence, the Cauchy-Schwarz inequality implies that
\begin{equation*}
\frac{1}{2}E\bigg[\int_{t_{0}}^{t\wedge\sigma}|\theta(s,V_{s})^{2} - \theta(s,\tilde{V}_{s})^{2}|\,ds\bigg]\leq c_{2,1}(t)\sup_{s\in [t_{0},t]}\big(1 + E\big[V_{s}^{\sigma}\big] + E\big[\tilde{V}_{s}^{\sigma}\big]\big)^{\frac{1}{2}} E\big[|V_{s}^{\sigma} - \tilde{V}_{s}^{\sigma}|\big]^{\frac{1}{2}}
\end{equation*}
with $c_{2,1}:[t_{0},T]\rightarrow\R_{+}$ given by $c_{2,1}(t) := \int_{t_{0}}^{t}(k_{\theta}(s) + \lambda_{\theta}(s))\lambda_{\theta}(s)\,ds$. Moreover, from the Burkholder-Davis-Gundy inequality we infer that
\begin{equation*}
E\bigg[\sup_{\tilde{s}\in [t_{0},t]}\bigg|\int_{t_{0}}^{\tilde{s}\wedge\sigma}\theta(s,V_{s}) - \theta(s,\tilde{V}_{s})\,d\hat{W}_{s}\bigg|\bigg] \leq c_{2,2}(t)\sup_{s\in [t_{0},t]} E\big[|V_{s}^{\sigma} - \tilde{V}_{s}^{\sigma}|\big]^{\frac{1}{2}},
\end{equation*}
where $c_{2,2}:[t_{0},T]\rightarrow\R_{+}$ is defined via $c_{2,2}(t):= 2(\int_{t_{0}}^{t}\lambda_{\theta}(s)^{2}\,ds)^{1/2}$. Because we have $c_{2}(t_{0},\cdot)= c_{2,1} + c_{2,2}$, the desired estimate follows.
\end{proof}

The proof of Proposition~\ref{pr:positivity of the variance process}, which extends several ideas from the proof of Theorem~2.2 in~\cite{MisPos08}, relies on the construction of the following function.

\begin{Lemma}\label{le:positivity of the variance process}
Under~\eqref{con:v.7}, there is $\varphi\in C^{1}(]0,\infty[)$ satisfying the following two conditions:
\begin{enumerate}[(i)]
\item $\lim_{x\downarrow 0} \varphi(x) = -\infty$ and $\varphi' > 0$. Further, $\varphi$ is twice continuously differentiable on $]0,\varepsilon]$ and $[\varepsilon,\infty[$ such that $\varphi''(x) > 0$ for any $x > \varepsilon$.
\item $((\eta^{2}/2)\varphi'' + \zeta\varphi')(\cdot,x)$ is bounded from below by $- c_{0}\varphi_{0}(x)$, whenever $x < \varepsilon$, and by $-c_{\zeta}\varphi_{\zeta}(x)\varphi'(x)$, if $x > \varepsilon$, for all $x > 0$ with $x\neq\varepsilon$ a.e.
\end{enumerate}
\end{Lemma}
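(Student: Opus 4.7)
The plan is to define $\varphi$ piecewise: a logarithm on $]0,\varepsilon]$ and a $C^1$-matched convex quadratic on $[\varepsilon,\infty[$. Each piece is chosen so that the corresponding inequality in~(V.7) becomes, after a direct rearrangement, precisely the lower bound required in~(ii).

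On the inner interval $]0,\varepsilon]$, set $\varphi(x) := \log(x)$. Then $\varphi'(x) = 1/x > 0$, $\varphi''(x) = -1/x^2$, and $\lim_{x\downarrow 0}\varphi(x) = -\infty$, so the conditions on the left piece in~(i) are satisfied. Rearranging the first inequality in~(V.7) yields
\begin{equation*}
\bigl((\eta^2/2)\varphi'' + \zeta\varphi'\bigr)(\cdot,x) = -\frac{\eta(\cdot,x)^2}{2x^2} + \frac{\zeta(\cdot,x)}{x} \geq -c_0\varphi_0(x) \quad \text{a.e.,}
\end{equation*}
which is exactly the bound demanded in~(ii) on $]0,\varepsilon[$.

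On the outer interval $[\varepsilon,\infty[$, pick any $\delta>0$ and set
\begin{equation*}
\varphi(x) := \log(\varepsilon) + \frac{x-\varepsilon}{\varepsilon} + \delta(x-\varepsilon)^2,
\end{equation*}
so that $\varphi(\varepsilon)$ and $\varphi'(\varepsilon)$ agree with the corresponding values on the inner piece, giving a $C^1$-match at $\varepsilon$. One computes $\varphi'(x) = 1/\varepsilon + 2\delta(x-\varepsilon) \geq 1/\varepsilon > 0$ and $\varphi''(x) = 2\delta > 0$ for every $x \geq \varepsilon$. Hence, from $\eta^2\varphi''/2 \geq 0$ and the second inequality in~(V.7) combined with $\varphi' > 0$,
\begin{equation*}
\bigl((\eta^2/2)\varphi'' + \zeta\varphi'\bigr)(\cdot,x) \geq \zeta(\cdot,x)\varphi'(x) \geq -c_\zeta(\cdot)\varphi_\zeta(x)\varphi'(x)
\end{equation*}
for all $x > \varepsilon$ a.e., which is the desired bound. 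The jump of $\varphi''$ at $\varepsilon$ (from $-1/\varepsilon^2$ to $2\delta$) is harmless, since the statement only demands $C^2$-regularity on each of the two closed half-intervals separately and the bound on $\varphi''$ is only asked on $]\varepsilon,\infty[$.

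There is essentially no obstacle: this is the standard Feller-type boundary test function used to force non-attainment of the origin by a one-dimensional diffusion, and condition~(V.7) is tailored precisely so that $\log$ on the left and the chosen quadratic on the right satisfy the relevant differential inequalities. The value of $\delta$ is immaterial (any positive constant works, e.g.\ $\delta=1$), so no delicate parameter tuning is needed.
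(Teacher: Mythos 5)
Your construction is correct. The decomposition is the same as the paper's: $\log$ on $]0,\varepsilon]$, which converts the first inequality of~(V.7) directly into the required lower bound $-c_{0}\varphi_{0}$; a $C^{1}$-matched increasing convex piece on $[\varepsilon,\infty[$; and on the outer interval the estimate is obtained by discarding the non-negative term $(\eta^{2}/2)\varphi''\geq 0$ and invoking $\zeta\geq -c_{\zeta}\varphi_{\zeta}$ together with $\varphi'>0$. The only divergence from the paper is the explicit choice of the outer piece: the paper takes $\varphi'(x)=\hat{\varphi}(x)=(1/\varepsilon)\exp\bigl(\int_{\varepsilon}^{x}\varphi_{\zeta}(y)^{-1}\,dy\bigr)$, so that $\varphi_{\zeta}\varphi''=\varphi'$, whereas you take the quadratic $\varphi(x)=\log(\varepsilon)+(x-\varepsilon)/\varepsilon+\delta(x-\varepsilon)^{2}$. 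Since the outer bound in~(ii) and its use in Proposition~4.9 rely only on $\varphi'>0$, $\varphi''>0$ and monotonicity of $\varphi'$ and $\varphi_{\zeta}$, and since both proofs throw away the $\eta^{2}$ contribution anyway, the specific functional form is immaterial; your quadratic is a slightly more elementary choice and loses nothing, while the paper's $\hat{\varphi}$ is simply the canonical solution of $\varphi_{\zeta}\varphi''=\varphi'$. Either works.
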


\begin{proof}
We define $\hat{\varphi}\in C^{1}([\varepsilon,\infty[)$ by $\hat{\varphi}(x):= (1/\varepsilon)\exp(\int_{\varepsilon}^{x}\varphi_{\zeta}(y)^{-1}\,dy)$. Then it follows readily that the function $\varphi:]0,\infty[\rightarrow\mathbb{R}$ given by 
$\varphi(x):=\log(x)$ for $x < \varepsilon$ and
\begin{equation*}
\varphi(x):=\log(\varepsilon) + \int_{\varepsilon}^{x}\hat{\varphi}(y)\,dy\quad\text{for $x\geq\varepsilon$}
\end{equation*}
is a feasible choice, which, however, is not of class $C^{2}$, since $\varphi_{+}''(\varepsilon) = 1/(\varphi_{\zeta}(\varepsilon)\varepsilon)$.
\end{proof}

\begin{proof}[Proof of Proposition~\ref{pr:positivity of the variance process}]
The assertion follows if we can verify that the stopping time $\sigma:=\inf\{t\in [t_{0},T]\,|\, V_{t}\leq 0\}$ satisfies $\sigma = \infty$ a.s. To this end, we choose $n_{0}\in\mathbb{N}$ with $v_{0}/n_{0} < \varepsilon < n_{0}v_{0}$ and introduce two stopping times by
\begin{equation*}
\sigma_{m}:=\inf\{t\in [t_{0},T]\,|\, V_{t}\leq v_{0}/m\} \quad\text{and}\quad \overline{\sigma}_{n}:=\inf\{t\in [t_{0},T]\,|\, V_{t} \geq nv_{0}\}
\end{equation*}
for $m,n\in\mathbb{N}$ with $m\wedge n\geq n_{0}$. Further, let $\varphi\in C^{1}(]0,\infty[)$ satisfy the conditions of Lemma~\ref{le:positivity of the variance process} and set $\varphi(x):=0$ for any $x\in ]-\infty,0]$ and $B_{\varepsilon}(\omega):=\{t\in [t_{0},T]\,|\, V_{t}(\omega)\neq\varepsilon\}$ for all $\omega\in\Omega$. Then the generalised It{\^o} formula in~\cite{Pes05} gives
\begin{equation*}
\begin{split}
&\varphi(V_{T}^{\sigma_{m,n}}) - \varphi(v_{0}) = \int_{t_{0}}^{T\wedge\sigma_{m,n}}\varphi'(V_{s})\,dV_{s} + \frac{1}{2}\int_{B_{\varepsilon}}\varphi''(V_{s})\mathbbm{1}_{[t_{0},\sigma_{m,n}]}(s)\,d\langle V\rangle_{s}\\
&\geq - k_{\varepsilon} - \varphi_{\zeta}(nv_{0})\varphi'(n v_{0})\int_{t_{0}}^{T}c_{\zeta}(s)\,ds + \int_{t_{0}}^{T\wedge\sigma_{m,n}}\varphi'(V_{s})\eta(s,V_{s})\,d\tilde{W}_{s}\quad\text{a.s.},
\end{split}
\end{equation*}
where $\sigma_{m,n}:=\sigma_{m}\wedge\overline{\sigma}_{n}$ and $k_{\varepsilon}:= \varphi_{0}(\varepsilon)\int_{t_{0}}^{T}c_{0}(s)\,ds + \varphi_{\zeta}(\varepsilon)\varphi'(\varepsilon)\int_{t_{0}}^{T}c_{\zeta}(s)\,ds
$. For any $l\in\N$ we define a stopping time by $\hat{\sigma}_{l}:=\inf\{t\in [t_{0},T]\,|\,\int_{t_{0}}^{t}\varphi'(V_{s})^{2}\eta(s,V_{s})^{2}\,ds\geq l\}$. Then
\begin{equation*}
E\big[\varphi(V_{T}^{\sigma_{m,n}})\big] = \lim_{l\uparrow\infty} E\big[\varphi(V_{T}^{\hat{\sigma}_{l}\wedge\sigma_{m,n}})\big] \geq \varphi(v_{0}) -k_{\varepsilon} - \varphi_{\zeta}(nv_{0})\varphi'(n v_{0})\int_{t_{0}}^{T}c_{\zeta}(s)\,ds
\end{equation*}
follows from dominated convergence. At the same time, as $V_{\sigma_{m}} = v_{0}/m$ a.s.~on $\{\sigma_{m} < \infty\}$ and $\lim_{x\downarrow 0} \varphi(x) = -\infty$,  we obtain that
\begin{equation*}
E\big[\varphi(V_{T}^{\sigma_{m,n}})\big] \leq \varphi(v_{0}/m)P(\sigma_{m}\leq\overline{\sigma}_{n}\wedge T) + \max_{x\in ]0,nv_{0}]}\varphi^{+}(x).
\end{equation*}
These two estimates imply that $\lim_{m\uparrow\infty} P(\sigma_{m}\leq\overline{\sigma}_{n}\wedge T) = P(\sigma\leq\overline{\sigma}_{n}\wedge T) = 0$. Hence, $\sigma >\overline{\sigma}_{n}\wedge T$ a.s.~and the desired result follows from the fact that $\sup_{n\in\N}\overline{\sigma}_{n} = \infty$.
\end{proof}

\begin{proof}[Proof of Proposition~\ref{pr:transformed SDE}]
(i) The drift and diffusion coefficients of the SDE~\eqref{eq:transformed stochastic volatility model} are independent of the first spatial coordinate. For this reason, the claim follows directly from Corollary 3.9, Remark 3.10 and Proposition 3.13 in~\cite{KalMeyPro21}, which yield pathwise uniqueness for the second SDE in~\eqref{eq:stochastic volatility model}.

(ii) From Theorem 3.27 in~\cite{KalMeyPro21} we know that there is a unique strong solution $V^{t_{0},v_{0}}$ to the second SDE in~\eqref{eq:stochastic volatility model} such that $V_{t_{0}}^{t_{0},v_{0}} = v_{0}$ a.s. According to Proposition~\ref{pr:positivity of the variance process}, the positivity of $V^{t_{0},v_{0}}(\omega)$ for $P$-a.e.~$\omega\in\Omega$ holds and, by taking a continuous modification if necessary, we may assume that $V^{t_{0},v_{0}}(\omega) > 0$ for all $\omega\in\Omega$.

Furthermore, we may choose a continuous process $X^{t_{0},x_{0},v_{0}}$ that is adapted to the augmented natural filtration of the two-dimensional $(\mathscr{F}_{t})_{t\in [0,T]}$-Brownian motion $(W,\tilde{W})$ and satisfies~\eqref{eq:log-price dynamics} for $V = V^{t_{0},v_{0}}$. This argumentation justifies the assertion.

(iii) For any sequence $(s_{n},x_{n},v_{n})_{n\in\N}$ and each point $(s,x,v)$ in $[0,t]\times\R\times ]0,\infty[$ such that $s\leq s_{n}$ for all $n\in\N$ and $\lim_{n\uparrow\infty} (s_{n},x_{n},v_{n}) = (s,x,v)$, Theorem 21.4 and Lemma~21.9 in~\cite{Bau00} yield
\begin{equation*}
\lim_{n\uparrow\infty} E\big[\varphi(s_{n},X_{t}^{s_{n},x_{n},v_{n}},V_{t}^{s_{n},v_{n}})\big] = E\big[\varphi(s,X_{t}^{s,x,v},V_{t}^{s,v})\big]
\end{equation*}
if $(X_{t}^{s_{n},x_{n},v_{n}},V_{t}^{s_{n},v_{n}})_{n\in\N}$ converges in probability to $(X_{t}^{s,x,v},V_{t}^{s,v})$. By using Lemma~\ref{le:log-price comparison estimate}, this can be inferred from Theorem 4.6 in~\cite{KalMeyPro21}, which gives a first moment estimate for random It{\^o} processes and implies the two estimates~\eqref{eq:variance comparison estimate} and~\eqref{eq:variance growth estimate}. 

(iv) According to Lemma 3.5 in~\cite{Kal17}, for instance, the Markov property of the triple $((\hat{X},\hat{V}),(\hat{\mathscr{F}}_{t})_{t\in [0,T]},\mathbb{P})$ holds if we can show that
\begin{equation*}
E_{s,x,v}\big[\varphi(\hat{X}_{t},\hat{V}_{t})\big|\hat{\mathscr{F}}_{\tilde{s}}\big] = E_{\tilde{s},\hat{X}_{\tilde{s}},\hat{V}_{\tilde{s}}}\big[\varphi(\hat{X}_{t},\hat{V}_{t})\big]\quad\text{$P_{s,x,v}$-a.s.}
\end{equation*}
for any $s,\tilde{s},t\in [0,T]$ with $s\leq\tilde{s}\leq t$, each $(x,v)\in\R\times ]0,\infty[$ and every Lipschitz continuous function $\varphi:\R\times ]0,\infty[\rightarrow [0,1]$. This follows from the same reasoning as in the proof of Theorem 5.1.5 in~\cite{StrVar72}.

Finally, as Lemma 3.14 in~\cite{Kal17} shows that any Markov process with right-continuous paths  that is right-hand Feller is also strongly Markov, the proof is complete.
\end{proof}

\subsection{Proofs for the valuation function as mild and classical solution}

\begin{proof}[Proof of Theorem~\ref{th:pre-default value process}]
(i) Let $(s,x,v)\in [0,T]\times\R\times ]0,\infty[$ and $(X,V)$ be a solution to~\eqref{eq:transformed stochastic volatility model} on $[s,T]$ with $(X^{s},V^{s}) = (x,v)$ a.s., in which case it also solves~\eqref{eq:transformed SDE 2} under $\tilde{P}$. In particular, $\tilde{P}_{s,x,v}$ must be its law under $\tilde{P}$. Hence,~\eqref{eq:three terms} and~\eqref{eq:4.13} hold and our discussion preceding~\eqref{eq:inhomogeneity function} shows that~\eqref{con:m.1}-\eqref{con:m.4} are valid. For this reason, we may consider solutions to~\eqref{eq:pre-default valuation}.

Since $\alpha$, $\beta$, $\hat{H}$ and $u$ are continuous, so are the collateral process $\alpha\mathscr{V}$, the pre-default funding process $(1-\alpha)\mathscr{V}$, the pre-default hedging process $\hat{H}(\cdot,\exp(X),V,\mathscr{V})$ and the close-out value $\beta\mathscr{V}$, which justifies that~\eqref{con:c.1} holds.

By Remark~\ref{re:integrability conditions}, the conditions~\eqref{con:c.2} and~\eqref{con:c.3} follow from~\eqref{eq:pre-default condition 2} and the boundedness of $u$. For this reason, Propositions~\ref{pr:martingale characterisation} and~\ref{pr:backward stochastic representation} entail that $\mathscr{V}$ is an $(\mathscr{F}_{t})_{t\in [s,T]}$-semimartingale and the process $M:[s,T]\times\Omega\rightarrow\R$ given by
\begin{align*}
M_{t} &:= D_{0,t}(r)\mathscr{V}_{t}G_{t}(\tau) + \int_{0}^{t}D_{0,\hat{s}}(r)G_{\hat{s}}(\tau)\big(\hat{B}(\hat{s}, \exp(X_{\hat{s}}),V_{\hat{s}},\mathscr{V}_{\hat{s}}) + (\hat{r} - g_{\mathcal{I}} - g_{\mathcal{C}})(\hat{s})\mathscr{V}_{\hat{s}}\big)\,d\hat{s}
\end{align*}
is a continuous $(\mathscr{F}_{t})_{t\in [s,T]}$-martingale under $\tilde{P}$ such that
\begin{equation}\label{eq:specific backward integral representation}
\mathscr{V}_{t} = \phi(\exp(X_{T}),V_{T}) + \int_{t}^{T}\hat{B}(\hat{t},\exp(X_{\hat{t}}),V_{\hat{t}},\mathscr{V}_{\hat{t}})\,d\hat{t} - \int_{t}^{T}\frac{D_{0,\hat{t}}(-r)}{G_{\hat{t}}(\tau)}\,dM_{\hat{t}}\quad\text{a.s.}
\end{equation}
for all $t\in [s,T]$ a.s. Thereby we used the definitions of the two random functionals $A$, $\B$ and the function $\hat{B}$ in~\eqref{eq:finite variation process},~\eqref{eq:inhomogeneity process} and~\eqref{eq:inhomogeneity function} to find the relation
\begin{equation}\label{eq:representation of A}
\dot{A}_{t}(Y) = G_{t}(\tau)\big(\hat{B}(t,\exp(X_{t}),V_{t},Y) + (\hat{r} - g_{\mathcal{I}} - g_{\mathcal{C}})(t)Y\big)
\end{equation}
for a.e.~$t\in [0,T]$ a.s.~for any continuous $Y\in\mathscr{S}$. Because $\phi(\exp(X_{T}),V_{T})$ is bounded and $\int_{s}^{T}|\hat{B}(t,\exp(X_{t}),V_{t},\mathscr{V}_{t})|\,dt$ is integrable, we obtain that
\begin{equation*}
\tilde{E}\bigg[\sup_{t\in [s,T]} \bigg|\int_{s}^{t}\frac{D_{0,\hat{s}}(-r)}{G_{\hat{s}}(\tau)}\,dM_{\hat{s}}\bigg|\bigg] < \infty.
\end{equation*}
In particular, the $(\mathscr{F}_{t})_{t\in [s,T]}$-local martingale $\int_{s}^{\cdot}D_{0,\hat{s}}(-r)G_{\hat{s}}(\tau)^{-1}\,dM_{\hat{s}}$ is of class (DL) and therefore, it must be a martingale. Now we may take expectations in~\eqref{eq:specific backward integral representation} to see that $u$ satisfies~\eqref{eq:mild solution}. 

(ii) We already know that $(X,V)$ solves~\eqref{eq:transformed SDE 2} on $[s,T]$ under $\tilde{P}$. Thus, the condition on the filtration entails that the process $N:[s,T]\times\Omega\rightarrow\mathbb{R}$ defined via
\begin{equation*}
N_{t} := \mathscr{V}_{t} + \int_{s}^{t}\hat{B}(\hat{s},\exp(X_{\hat{s}}),V_{\hat{s}},\mathscr{V}_{\hat{s}})\,d\hat{s}
\end{equation*}
is a continuous $(\mathscr{F}_{t})_{t\in [s,T]}$-martingale under $\tilde{P}$. Indeed, its integrability follows directly from the definition of a mild solution. The martingale property $\tilde{E}[N_{t}|\mathscr{F}_{\tilde{s}}] = N_{\tilde{s}}$ a.s.~for any $\tilde{s},t\in [s,T]$ with $\tilde{s}\leq t$ is implied by
\begin{equation*}
\tilde{E}\bigg[\mathscr{V}_{t} + \int_{\tilde{s}}^{t}\hat{B}(\hat{s},\exp(X_{\hat{s}}),V_{\hat{s}},\mathscr{V}_{\hat{s}})\,d\hat{s}\,\bigg|\,\mathscr{F}_{\tilde{s}}\bigg] = \mathscr{V}_{\tilde{s}}\quad\text{a.s.}
\end{equation*}
This identity is a consequence of Proposition~\ref{pr:transformed SDE}, by extending the Markov property of the diffusion with measure theoretical methods. See Proposition 3.7 in~\cite{Kal17}[Chapter 3], for example. In particular, $\mathscr{V}$ is an $(\mathscr{F}_{t})_{t\in [s,T]}$-semimartingale.

Hence, for any continuous $(\mathscr{F}_{t})_{t\in [s,T]}$-local martingale $M$ that is indistinguishable from the stochastic integral $\int_{s}^{\cdot}D_{0,\hat{s}}(r)G_{\hat{s}}(\tau)\,dN_{\hat{s}}$ the representation~\eqref{eq:specific backward integral representation} holds. Now we obtain that
\begin{equation*}
M_{t} + D_{0,s}(r)u(s,x,v)G_{s}(\tau) =  D_{0,t}(r)\mathscr{V}_{t}G_{t}(\tau) + \int_{s}^{t}D_{0,\hat{s}}(r)\,dA_{\hat{s}}(\mathscr{V})
\end{equation*}
for each $t\in [s,T]$ a.s. by the uniqueness assertion of Proposition~\ref{pr:backward stochastic representation}. The boundedness of $u$ and the integrability of the random variable $\int_{s}^{T}D_{0,\hat{s}}(r)|\dot{A}_{\hat{s}}(\mathscr{V})|\,d\hat{s}$ imply that
\begin{equation*}
\tilde{E}\bigg[\sup_{t\in [s,T]}\bigg|D_{0,t}(r)\mathscr{V}_{t}G(\tau) + \int_{0}^{t}D_{0,\hat{s}}(r)\,dA_{\hat{s}}(\mathscr{V})\bigg|\bigg] < \infty.
\end{equation*}
Thus, $D_{0,\cdot}(r)\mathscr{V}G(\tau) + \int_{0}^{\cdot}D_{0,\hat{s}}(r)\,dA_{\hat{s}}(\mathscr{V})$ is a continuous $(\mathscr{F}_{t})_{t\in [s,T]}$-local martingale that is  of class (DL). Therefore, it is a martingale and we may invoke Proposition~\ref{pr:martingale characterisation} to complete the proof.
\end{proof}

\begin{proof}[Proof of Proposition~\ref{pr:mild solution}]
By~\eqref{con:p.1},~\eqref{con:p.2} and~\eqref{con:p.4}, all the rates $\hat{r}$, $\hat{c}_{+},\hat{c}_{-}$, $\hat{f}_{+},\hat{f}_{-}$, $\hat{h}_{+}$, $\hat{h}_{-}$ and the functions $g_{\mathcal{I}}$, $g_{\mathcal{C}}$, $\alpha$, $\beta$ are integrable. Thus,~\eqref{con:p.5} ensures that the affine growth condition~\eqref{eq:affine growth condition} and the Lipschitz condition~\eqref{eq:Lipschitz condition on compact sets} on compacts sets holds for $\hat{B}$.

Moreover, we note that $\hat{B}(t,\cdot,\cdot,\cdot)$ is continuous for a.e.~$t\in [0,T]$, since $\hat{\pi}$ and $\hat{H}$ are continuous, according to~\eqref{con:p.3} and~\eqref{con:p.4}. Hence, as we know that Proposition~\ref{pr:transformed SDE} is applicable, all requirements of Theorem~2.15 in~\cite{Kal20} are met and the assertions follow.
\end{proof}

\begin{proof}[Proof of Corollary~\ref{co:classical solution}]
By Proposition~\ref{pr:transformed SDE} and our discussion preceding the corollary, the assumptions~(2.2),~(2.3) and (2.9)-(2.13) in~\cite{BecSch05} hold. Further, from~\eqref{con:p.5} and~\eqref{con:p.6} we infer that $\hat{B}$ is of affine growth and locally Lipschitz continuous in $y\in\mathbb{R}$, uniformly in $(t,s,v)\in [0,T]\times ]0,\infty[^{2}$, and it is also locally H\oe lder continuous. This shows that the hypotheses~(2.19),~(2.20) and (2.21) in~\cite{BecSch05} are satisfied and the assertion follows from Theorem 2.4 in this reference.
\end{proof}

\let\OLDthebibliography\thebibliography
\renewcommand\thebibliography[1]{
\OLDthebibliography{#1}
\setlength{\parskip}{0pt}
\setlength{\itemsep}{2pt}}

\end{document}